\documentclass[11pt,reqno]{amsart}

\usepackage[T1]{fontenc}
\usepackage{lmodern}
\usepackage{microtype}
\usepackage{mathtools}
\usepackage{amssymb}
\usepackage{mathrsfs}
\usepackage[margin=0.85in]{geometry}
\usepackage{enumitem}
\usepackage{xcolor}
\usepackage[colorlinks=true,linkcolor=blue!55!black,citecolor=blue!55!black,urlcolor=blue!55!black]{hyperref}
\hypersetup{
  pdftitle={Orlov's rationality conjecture for surfaces},
  pdfauthor={Xun Lin and Shizhuo Zhang},
  pdfsubject={Exceptional collections and rational surfaces over complex and arbitrary-characteristic fields},
  pdfkeywords={exceptional collection, rational surface, Kuznetsov height, blowup, positive characteristic}
}

\allowdisplaybreaks
\numberwithin{equation}{section}
\setlist[enumerate]{leftmargin=2.1em,itemsep=0.25em,topsep=0.4em}
\setlist[itemize]{leftmargin=2.0em,itemsep=0.25em,topsep=0.4em}

\newtheorem{theorem}{Theorem}[section]
\newtheorem{conjecture}[theorem]{Conjecture}
\newtheorem{proposition}[theorem]{Proposition}
\newtheorem{lemma}[theorem]{Lemma}
\newtheorem{corollary}[theorem]{Corollary}

\theoremstyle{definition}
\newtheorem{definition}[theorem]{Definition}
\theoremstyle{remark}

\newcommand{\Db}{\mathrm D^{\mathrm b}}
\newcommand{\Perf}{\operatorname{Perf}}
\newcommand{\RHom}{\operatorname{RHom}}

\newcommand{\Hom}{\operatorname{Hom}}
\newcommand{\Ext}{\operatorname{Ext}}
\newcommand{\Pic}{\operatorname{Pic}}
\newcommand{\Supp}{\operatorname{Supp}}
\newcommand{\Cone}{\operatorname{Cone}}
\newcommand{\id}{\operatorname{id}}
\newcommand{\NHH}{\operatorname{NHH}}
\newcommand{\HH}{\operatorname{HH}}
\newcommand{\tr}{\operatorname{tr}}
\newcommand{\Conf}{\operatorname{Conf}}
\newcommand{\cO}{\mathcal O}
\newcommand{\cL}{\mathcal L}
\newcommand{\cM}{\mathcal M}
\newcommand{\cN}{\mathcal N}
\newcommand{\cF}{\mathcal F}
\newcommand{\cE}{\mathcal E}
\newcommand{\cA}{\mathcal A}

\newcommand{\cI}{\mathcal I}
\newcommand{\cJ}{\mathcal J}

\newcommand{\bL}{\mathbf L}
\newcommand{\bR}{\mathbf R}
\newcommand{\resultcite}[2]{\textup{[#1, \hyperlink{cite.#2}{#2}]}}

\title[Orlov's rationality conjecture for surfaces]
{Orlov's rationality conjecture for surfaces}

\author{Xun Lin}
\address{School of Science and Engineering, The Chinese University of Hong Kong, Shenzhen, Shenzhen 518172, P.~R. China}
\email{linxun@cuhk.edu.cn}

\author{Shizhuo Zhang}
\address{School of Mathematics, Sun Yat-sen University, Guangzhou 510275, P.~R. China}
\email{zhangshzh28@mail.sysu.edu.cn}

\subjclass[2020]{14F08, 14J26, 16E40}
\keywords{Exceptional collection, line bundle, rational surface, Kuznetsov height, blowup, positive characteristic}

\begin{document}

\begin{abstract}
We prove that a smooth projective surface over $\mathbb{C}$ admitting a full exceptional collection of line bundles is rational.
\end{abstract}
\maketitle

\section{Introduction}\label{sec:introduction}

We consider the following folklore conjecture attributed to Orlov.

\begin{conjecture}[{\resultcite{Introduction, p.~5}{Via17}}]\label{conj:orlov}
Let $X$ be a smooth connected projective surface over $\mathbb C$. If its bounded derived category of coherent sheaves $\Db(X)$ admits a full exceptional collection, then $X$ is rational.
\end{conjecture}

By the work of Hille and Perling, every smooth projective rational surface admits a full exceptional collection of line bundles \resultcite{Theorem~5.6}{HP11}. It is therefore natural to ask whether a smooth projective surface admitting a full exceptional collection of line bundles must be rational. This is precisely the line-bundle case of Conjecture~\ref{conj:orlov}. Our main result answers this question affirmatively.

\begin{theorem}\label{thm:main}
Let $X$ be a smooth connected projective surface over $\mathbb C$. If $\Db(X)$ admits a full exceptional collection of line bundles, then $X$ is rational.
\end{theorem}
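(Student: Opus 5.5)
The plan is to reduce Theorem~\ref{thm:main} to excluding surfaces of general type with $p_g=q=0$, and then to rule those out with a Kuznetsov-height argument adapted to line bundles.

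\textbf{Reduction.} Let $\cL_1,\dots,\cL_n$ be the full exceptional collection of line bundles. Additivity of Hochschild homology gives $\HH_\bullet(X)\cong k^{n}$ concentrated in degree $0$. By the HKR decomposition, $h^{p,q}(X)=0$ for $p\neq q$, so $q=p_g=0$ and $\chi(\cO_X)=1$. Moreover $K_0(X)\cong\mathbb Z^n$ is torsion free, hence $\Pic(X)$ is torsion free and $n=e(X)=2+\rho(X)$.

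I then run through the Enriques--Kodaira classification of surfaces with $p_g=q=0$:
\begin{itemize}
\item Enriques surfaces are excluded because $K_X$ is $2$-torsion in $\Pic$.
\item Properly elliptic surfaces ($\kappa=1$) with $p_g=q=0$ have $K_X^2=0$ on the minimal model and $K_X$ numerically a positive rational multiple of the fibre. To exclude them I would use the numerical constraints below, or Vial's analysis of numerically exceptional collections of maximal length, which I intend to invoke rather than reprove.
\item If $\kappa=-\infty$ and $q=0$, then $X$ is rational.
\end{itemize}
What remains is $X$ of general type with $p_g=q=0$, so $1\le K_{X_{\min}}^2\le 9$.

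\textbf{Numerical constraints.} For $i<j$ put $D_{ij}=\cL_i-\cL_j$. Exceptionality gives $H^\ast(X,D_{ij})=0$, so Riemann--Roch yields $D_{ij}^2-D_{ij}\cdot K_X=-2$, and in particular $H^0(D_{ij})=H^0(K_X-D_{ij})=0$. These will feed into the height computation.

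\textbf{Height argument for general type.} I use Kuznetsov's result that if the pseudoheight of an exceptional collection is $h$, then $\HH^k(X)\to\HH^k(\cA)$ is an isomorphism for $k\le h-2$, where $\cA$ is the orthogonal complement. For a full collection $\cA=0$, while $\HH^0(X)=k\neq0$. Hence a full collection must have pseudoheight $\le 1$, and it suffices to show the pseudoheight is $\ge 2$ when $X$ is of general type.

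The inverse Serre functor acts by $S^{-1}(\cL)=\cL\otimes\omega_X^{-1}[-2]$. The pseudoheight is therefore the minimum over chains $a_0<a_1<\dots<a_p$ of
\[
\sum_{t=1}^{p}\min\deg\Ext^\bullet(\cL_{a_{t-1}},\cL_{a_t})+\min\deg H^\bullet\bigl(X,\cL_{a_0}-\cL_{a_p}-K_X\bigr)+2-p .
\]
\begin{itemize}
\item For $p=0$ the last term is $H^\bullet(-K_X)$. Here $H^0(-K_X)=0$, and $H^1(-K_X)\cong H^1(2K_X)^\vee=0$ by Mumford vanishing on a surface of general type. So the minimum degree is $2$, and this chain contributes $\ge 2$.
\item For longer chains I must show that the last term $H^\bullet(D-K_X)$ with $D=D_{a_0a_p}$ has no $H^0$, and that its $H^1$ can only appear when the intermediate $\Ext$'s sit in degree $\ge1$. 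By Serre duality, $H^0(D-K_X)$ and $H^1(D-K_X)$ are dual to $H^2$ and $H^1$ of $2K_X-D$.
\end{itemize}
To control these I would combine three ingredients: the bigness of $K_X$; the vanishing $H^0(-D)=H^0(K_X+D)=0$ coming from exceptionality (which, applied along the chain, forces successive nonzero $\Hom$'s to have effective differences); and the torsion-freeness and integrality constraints from $D^2-D\cdot K_X=-2$.

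The main obstacle is this chain estimate: proving that every chain contributes at least $2$ when $K_X$ is big. I would first try to show that $2K_X-D$ is effective and big for each $D=D_{ij}$, so that $H^2(2K_X-D)=0$. The natural route is to compare $(2K_X-D)\cdot H$ for an ample $H$ using the Riemann--Roch identity, and to use that $-D$ and $K_X+D$ are not effective. If this fails in general, I would fall back on the bound $K_X^2\le 9$ together with $n=12-K_X^2\ge 3$ to restrict the possible lattices and treat the remaining cases directly.
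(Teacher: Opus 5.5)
Your proposal has two genuine gaps. The step you flag as the main obstacle also cannot be carried out as planned, because pseudoheight ignores composition.

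\emph{The chain estimate is false.} Suppose $X$ is minimal of general type with $p_g=q=0$ and contains a chain $C=C_1+C_2+C_3$ of $(-2)$-curves; nothing in your setup rules this out. Every connected subchain $B$ has $H^\bullet(\cO_X(-B))=0$, so $(\cL,\cL(C_1),\cL(C_1+C_2),\cL(C))$ is exceptional, with nonzero consecutive $\Hom$'s. Since $K_X|_C$ is trivial and $H^1(2K_X)=H^2(2K_X)=0$, you get $H^0(-C-K_X)=0$. However, $H^1(-C-K_X)\simeq H^1(C,\cO_C(C))^\vee\simeq\mathbb C$, because $\cO_C(C)$ has multidegree $(-1,0,-1)$. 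This chain therefore contributes $0+1+2-3=0$ in your formula, and an $A_2$-chain already gives $1$. So $H^1(D-K_X)$ does occur with all intermediate $\Ext$'s in degree $0$. Making $2K_X-D$ big only kills $H^0(D-K_X)$, which leaves the bound $3-p$ for chains of forward $\Hom$'s.

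These low-degree $E_1$-terms disappear only on $E_2$, because forward morphisms compose nontrivially. Exploiting this is the missing idea. If $K$ is nonnegative on every component of every forward zero divisor, then:
\begin{itemize}
\item these divisors are rigid and nested (Lemma~\ref{lem:3-2}), so the degree-zero $\Hom$'s form a product of path algebras of linear chains;
\item multiplication by their sections is injective on $H^1$ (Proposition~\ref{prop:3-4}), so $\Ext^1$ is projective over this hereditary algebra (Proposition~\ref{prop:3-8});
\item filtering the bar complex by the number of positive-degree factors gives $E_2$-vanishing in degrees $\le0$, i.e.\ $h\ge1$ (Lemma~\ref{lem:3-15}, Proposition~\ref{prop:3-16}).
\end{itemize}
This already excludes fullness, since a full collection has height $0$; your target of $2$ is more than needed. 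Because the bound only uses that $K_S$ is nef, it also settles $\kappa=1$, which numerics cannot do. Dolgachev surfaces are homeomorphic to $\mathbb P^2$ blown up in nine points, and the one of type $(2,3)$ carries a non-full exceptional collection of maximal length (Cho--Lee).

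\emph{$X$ need not be minimal, but your estimates are those of the minimal model.} After a single blowup $X\to S$ of a minimal surface of general type with $p_g=0$, one already has $H^1(X,2K_X)\simeq\mathbb C$, so Mumford vanishing is lost. More seriously, forward morphisms $\cL\to\cL(E)$ with $K_X\cdot E=-1$ break any estimate of this type. So do chains such as $E_1-E_2,\ E_2-E_3,\ E_3$ coming from infinitely near centers, which again give a chain contribution $\le0$. Some such pairs are unavoidable: by Lemma~\ref{lem:3-27}, a full collection on a general blowup must contain a pair $(\cL,\cL(E_a))$. The paper handles this in three steps:
\begin{itemize}
\item it deforms the centers to distinct points off all rational curves of $S$ while preserving fullness (Corollary~\ref{cor:3-23});
\item it shows such a pair can be taken adjacent (Proposition~\ref{prop:3-28});
\item it mutates the pair into $\cO_E(d)$ followed by line bundles that descend to the contraction (Proposition~\ref{prop:3-31}).
\end{itemize}
Iterating produces a full collection on $S$, where the height bound gives the contradiction. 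Your plan has no substitute for this descent.
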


Combining Theorem~\ref{thm:main} with \resultcite{Theorem~5.6}{HP11}, we obtain the following characterization of rational surfaces.

\begin{corollary}\label{cor:characterization}
A smooth connected projective complex surface is rational if and only if its bounded derived category admits a full exceptional collection of line bundles.
\end{corollary}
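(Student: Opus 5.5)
Here is the plan. I would run the Castelnuovo criterion for everything except surfaces of general type, and handle general type separately using the line-bundle hypothesis.

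\textbf{Step 1: standard invariants.} Let $\cL_1,\dots,\cL_n$ be the full exceptional collection of line bundles. Since the classes of exceptional objects generate $K_0(X)$ freely, and Hochschild homology is additive over semiorthogonal decompositions, we get $\HH_i(X)=0$ for $i\neq 0$ and $\HH_0(X)=\mathbb C^n$. By the Hochschild--Kostant--Rosenberg theorem this gives $h^{p,q}(X)=0$ for $p\neq q$. In particular $q(X)=p_g(X)=0$, $H^2(X,\mathbb Z)$ is algebraic, and $\Pic(X)$ is free of finite rank. Exceptionality of $\cO_X$ (up to twisting $\cL_1$ to $\cO_X$) is consistent with $\chi(\cO_X)=1$.

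\textbf{Step 2: classification reduction.} By Castelnuovo, it suffices to show $P_2(X)=0$. Consider the Kodaira dimension $\kappa(X)$.
\begin{itemize}
\item If $\kappa(X)=-\infty$ and $q=0$, then $X$ is rational.
\item If $\kappa(X)=0$, then $q=p_g=0$ forces $X$ to be Enriques. Enriques surfaces have torsion in $\Pic(X)$, namely $K_X$, which contradicts $K_0(X)$ being free (its $\Pic$ part).
\item If $\kappa(X)=1$, an elliptic surface with $p_g=q=0$ has $K_X$ numerically a positive rational multiple of the fibre class, so $K_X^2=0$.
\item If $\kappa(X)=2$, then $K_X^2\geq 1$.
\end{itemize}
Since $\chi=1$, Noether's formula gives $e=12-K^2$. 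Also $n=e=2+\rho$ and $\rho=b_2=10-K^2$. So the remaining cases are $K^2=0$ with $n=12$, and $1\le K^2\le 9$ with $n=12-K^2$. These are exactly the known exotic cases: Dolgachev surfaces and Barlow or Burniat-type surfaces, which can carry exceptional collections of non-full length. I therefore need an argument specific to full collections of line bundles.

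\textbf{Step 3: the key argument, and the main obstacle.} I would rule out $\kappa\ge 1$ using the line bundles concretely. Write $D_{ij}=\cL_j-\cL_i$ for $i<j$. Exceptionality gives $\chi(-D_{ij})=0$, so by Riemann--Roch $D_{ij}^2+K\cdot D_{ij}=-2$. Fullness gives a numerically exceptional basis of $\Pic\oplus\mathbb Z^2$ of toric-system type, in the sense of Hille--Perling. I expect the Hille--Perling analysis to show that such a system satisfies $\sum_i A_i=-K_X$ with $A_i^2+2=\chi(A_i)$-type conditions, and that it determines a ``fan'' forcing $h^0(-D)\neq 0$ for some natural divisors.

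The decisive step I would try first is to show that $-K_X$, or a sum of consecutive differences, is effective. Concretely, I would use the cyclic extension $\cL_1,\dots,\cL_n,\cL_1\otimes\omega_X^{-1}$ of the collection (Serre functor/helix) to get $\Hom(\cL_n,\cL_1(-K))\neq0$ or nonvanishing $\Ext$ via Serre duality. If some $\cL_{i+1}-\cL_i$ in the helix has sections adding up to $-K_X$, then $h^0(-mK)>0$. Combined with $\kappa\ge 0$, this forces $K\equiv 0$ numerically, which contradicts $K^2\geq 1$ or the Dolgachev case where $K\not\equiv 0$.

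Producing these sections is the hard part. It requires the vanishing-to-nonvanishing argument: from $\chi(D)=0$ and $h^2(D)=h^0(K-D)=0$ (because $K-D$ is not effective when $D$ is nef-ish on a surface with $\kappa\ge 0$), deduce $h^0=h^1$ for the differences. I would then induct on the collection's length, or reduce via mutations to adjacent pairs, which is the Kuznetsov height idea mentioned in the paper's keywords. If that fails, I would fall back on the Hochschild cohomology or height obstruction: show that the pseudoheight of a line-bundle collection on a surface with $\kappa\ge1$ is positive, so the orthogonal phantom cannot vanish.
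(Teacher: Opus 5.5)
Your outline has a genuine gap exactly where the theorem has content.

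Steps~1--2 are fine as far as they go. HKR gives $q=p_g=0$, and Castelnuovo reduces you to $\kappa(X)\ge0$. The Enriques case is excluded by torsion in $K_0(X)$. This needs one line, since $\Pic(X)$ is only a subquotient of $K_0(X)$: the class $x=[\omega_X]-[\cO_X]$ satisfies $4x=0$, while $\det x=\omega_X\not\simeq\cO_X$. But these steps use only Hodge and numerical data, and such data cannot settle $\kappa(X)\in\{1,2\}$. By Vial, numerically exceptional collections of line bundles of maximal length exist on surfaces of general type with $p_g=q=0$. Toric-system identities such as $\sum_iA_i=-K_X$ are just telescoping.

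Step~3 is where fullness must be used geometrically, and it contains no argument. Its proposed decisive claim does not follow from fullness. The blowup of $\mathbb P^2$ at ten or more general points has a full exceptional collection of line bundles (Hille--Perling), yet $h^0(-K_X)=0$. Your mechanism ($\chi(D)=0$, $h^2(D)=0$) only gives $h^0(D)=h^1(D)$, never $h^0(D)>0$.

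The pseudoheight fallback fails too. A chain $i_0<\cdots<i_p$ of nonzero degree-zero forward Homs contributes to the $E_1$ page in total degree at most $4-p$ whenever its inverse-Serre factor is nonzero. So an $E_1$-level bound gives nothing unless you can bound chain lengths.

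Two smaller points. Your Step~2 numerics ($K_X^2=0$, $K_X^2\ge1$, $n=12-K_X^2$) hold only for minimal $X$. You also do not address the converse implication, which is Hille--Perling's theorem that every rational surface has a full exceptional collection of line bundles; the paper obtains it by that citation.

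The two missing ideas are those behind Theorem~\ref{thm:main}.

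\textbf{The height estimate.} The first is an $E_2$-level height estimate (Proposition~\ref{prop:3-16}). Suppose $K$ is nonnegative on every component of every forward zero divisor, for instance if $K$ is nef. Then:
\begin{itemize}
\item forward Hom spaces are one-dimensional with nested zero divisors, so $R=A^0$ is a product of path algebras of linear chains;
\item multiplication by defining sections is injective on $H^1$, so $A^1$ is $R$-projective;
\item filtering the bar complex by the number of positive-degree factors gives $\NHH^{u}=0$ for $u<1$.
\end{itemize}
Kuznetsov's localization triangle then rules out fullness. This treats every minimal $S$ with $K_S$ nef uniformly, Enriques included.

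\textbf{Descent to the minimal model.} Since $K_X$ is not nef on a blowup, the collection must be pushed down to the minimal model, and this is not automatic for line bundles. The paper deforms the centres off all rational curves of $S$ while preserving fullness (Corollary~\ref{cor:3-23}). After this, each forward zero divisor is either a single $(-1)$-curve or a pullback disjoint from the exceptional locus. The height estimate then forces an adjacent pair $(\cL,\cL(E))$ (Proposition~\ref{prop:3-28}). Mutations plus a common twist descend the collection through the contraction (Proposition~\ref{prop:3-31}).

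Without replacements for both steps, your argument leaves every nonrational case with $\kappa(X)\ge1$ open.
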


Rationality had previously been established for full strong exceptional collections of line bundles \resultcite{Theorem~4.4}{BS17}, and for strong exceptional collections of line bundles of maximal numerical length \resultcite{Theorem~1.2}{Zha19}. Under the additional cyclic strongness condition, a more precise conclusion was obtained: the surface is weak del Pezzo of degree at least three \resultcite{Theorem~1 and Proposition~6.2}{EXZ21}. Without strongness, rationality was proved for full exceptional collections of line bundles on surfaces of Picard number at most three \resultcite{Theorem~1.4}{Zha19}.

Theorem~\ref{thm:main} removes these restrictions. A first difficulty is that the numerical conditions on an exceptional collection also occur on nonrational surfaces. Numerically exceptional collections of line bundles of maximal length are characterized by a condition on the intersection lattice \resultcite{Theorem~3.1}{Via17}, and such collections exist on surfaces of general type with $p_g=q=0$ \resultcite{Theorem~3.10}{Via17}. To obtain rationality, we therefore need a geometric consequence of fullness.

A second difficulty is the passage to a minimal model. Each contraction must be accompanied by a construction of a full exceptional collection of line bundles on the contracted surface. After deforming the blowup centers, we construct this new collection from a pair of line bundles differing by an exceptional curve (Propositions~\ref{prop:3-28} and~\ref{prop:3-31}). This gives a contraction procedure that can be repeated until the minimal model is reached.

The result is valid in arbitrary characteristic as well. Appendix~\ref{sec:arbitrary-characteristic} proves Theorem~\ref{thm:main} over every algebraically closed field (Theorem~\ref{thm:arbitrary-characteristic}). Over an arbitrary ground field, a full exceptional collection of line bundles implies geometric rationality; if the field is perfect, it implies rationality over that field (Corollary~\ref{cor:arbitrary-ground-field}).

\subsection{Outline of the proof}\label{subsec:intro-proof}

Suppose that $X$ admits a full exceptional collection of line bundles and is not rational. Its minimal model $S$ satisfies $p_g(S)=q(S)=0$, and $K_S$ is nef. We prove that a surface with nef canonical class cannot admit such a collection, and then show how to descend the collection from a suitable blowup of $S$ to $S$ itself.

The obstruction on $S$ comes from Kuznetsov's height: positive height excludes fullness \resultcite{Proposition~6.1}{Kuz15}. We establish an estimate that also applies during the contraction process. Let $\cE=(\cL_1,\ldots,\cL_\gamma)$ be a nonempty exceptional collection of line bundles on a smooth projective surface $Y$. Proposition~\ref{prop:3-16} gives $h_Y(\cE)\ge1$ if $K_Y\cdot C\ge0$ for every prime component $C$ of the zero divisor of every nonzero map $\cL_i\to\cL_j$ with $i<j$. In particular, this condition holds when $K_Y$ is nef.

The estimate begins with the geometry of these zero divisors. Under the intersection condition above, multiplication by a defining section is injective on the relevant first cohomology groups (Proposition~\ref{prop:3-4}). Each zero divisor is also unique in its linear equivalence class, and divisors of maps with a common source or target are nested (Lemma~\ref{lem:3-2}). These properties determine the algebra of compositions. Writing $V=\bigoplus_i\cL_i$ and $R=\Hom_Y(V,V)$, we obtain a product of path algebras of linearly oriented chains; the injectivity just established makes $\Ext_Y^1(V,V)$ projective as a left $R$-module (Proposition~\ref{prop:3-8}).

In the normal Hochschild spectral sequence, the first differential is given by composition. We filter its graded bar complex by the number of factors of positive degree. Each resulting piece is computed by derived tensor products over $R$, followed by a derived trace (Lemma~\ref{lem:3-10}). The projectivity of $\Ext_Y^1(V,V)$ and the resolutions for a path algebra give vanishing in total degrees below one on the second page (Lemma~\ref{lem:3-15}). This proves the height estimate.

To apply it to a blowup of $S$, we move the centers away from the rational curves on $S$. The family of ordered point blowups includes towers with infinitely near centers, and the given line bundles extend over this family. Fullness is open, while $S$ contains at most countably many integral rational curves. We can therefore choose a member
\[
Y=\operatorname{Bl}_{p_1,\ldots,p_r}S
\]
carrying a full exceptional collection, with distinct centers lying on none of these curves (Corollary~\ref{cor:3-23}).

On $Y$, the zero divisor of a nonzero map between distinct members of the collection is either a pullback from $S$ or a single exceptional curve (Lemma~\ref{lem:3-24}). If all these divisors were pullbacks, the height estimate would contradict fullness. Thus two members have the form $\cL,\cL(E)$ for an exceptional curve $E$. Proposition~\ref{prop:3-28} shows that such a pair can be chosen adjacent. Mutations then replace the pair by a sheaf on $E$ and a line bundle; after moving that sheaf to the beginning of the collection, a common twist makes the remaining line bundles descend to the contraction of $E$ (Proposition~\ref{prop:3-31}). The remaining centers still avoid the rational curves of $S$, so the procedure can be repeated. It produces a full exceptional collection of line bundles on $S$, contradicting the height estimate.

Appendix~\ref{sec:arbitrary-characteristic} extends this argument to arbitrary characteristic. A Picard-scheme calculation replaces the finite-logarithm argument for nonreduced divisors (Lemma~\ref{lem:appendix-picard}). The centers are chosen at a geometric generic point of the configuration space, where the incidence condition of Lemma~\ref{lem:appendix-generic-avoidance} holds on every partial blowup. The same height estimate and contraction procedure then apply.

\subsection{Organization of the article}\label{subsec:intro-organization}

Section~\ref{sec:preliminaries} fixes the conventions used throughout the proof. Subsections~\ref{subsec:exceptional-mutations} and~\ref{subsec:left-orthogonal} recall exceptional collections, mutations, and left-orthogonal divisors. Subsections~\ref{subsec:height} and~\ref{subsec:normal-spectral-sequence} introduce Kuznetsov's height and the bar spectral sequence that computes normal Hochschild cohomology.

Section~\ref{sec:rationality} carries out the argument over $\mathbb C$. Subsections~\ref{subsec:divisor-geometry}--\ref{subsec:bar-height} pass from the geometry of effective divisors to the algebra of compositions and then to the height bound. Subsection~\ref{subsec:general-centers} moves the blowup centers while preserving fullness. Subsections~\ref{subsec:exceptional-pair} and~\ref{subsec:contraction} find the adjacent exceptional-curve pair and descend the collection through its contraction. The induction is completed in Subsection~\ref{subsec:main-proof}, proving Theorem~\ref{thm:main} and Corollary~\ref{cor:characterization}.

Appendix~\ref{sec:arbitrary-characteristic} makes the changes needed in arbitrary characteristic. Subsection~\ref{subsec:appendix-literature} reviews the rationality question over general fields and its relation to the results of this appendix. Subsections~\ref{subsec:appendix-divisors}--\ref{subsec:appendix-height} replace the nonreduced Picard-group argument and carry over the height bound. Subsections~\ref{subsec:appendix-generic-centers}--\ref{subsec:appendix-contraction} establish the incidence condition and apply it to deformation and contraction. Subsection~\ref{subsec:appendix-proof} completes the proof over an algebraically closed field, and Subsection~\ref{subsec:appendix-ground-fields} proves the ground-field assertions.

\subsection{Acknowledgements}\label{subsec:acknowledgements}

Shizhuo Zhang is supported by the Sun Yat-sen University Starting Grant 34000-12256019.

\subsection{AI disclosure}\label{subsec:ai-disclosure}

This work was inspired by the rationality results in \resultcite{Theorems~1.2 and~1.4}{Zha19}. The proof uses a sharper estimate for Kuznetsov's height of exceptional collections \resultcite{Definition~4.1}{Kuz15}, established in Proposition~\ref{prop:3-16}. The approach was also inspired by the first author's refinement of height theory through the Hochschild--Serre mechanism. In future work, we will use this refined theory and its estimates to study full exceptional collections whose members need not all be line bundles.

The authors obtained a complete proof through several rounds of interaction with AI. AI also assisted with auditing the arguments for correctness, polishing the language, and drafting the manuscript. The authors take responsibility for the correctness of the proofs.

\section{Exceptional line bundles and Kuznetsov's height}\label{sec:preliminaries}

We recall the categorical constructions and the divisor identities used in the surface proof. In the main text we work over $\mathbb C$; the field conventions for the appendix are stated there. Varieties are smooth, connected, and projective unless otherwise stated. We use the usual dg enhancement of $\Db(X)$, and regard a vector bundle as a complex concentrated in degree zero. Our shift convention is $H^q(C[s])=H^{q+s}(C)$. For a Cartier divisor $D$, write $\cF(D)=\cF\otimes\cO_X(D)$. We write $\Pic(X)$ for the Picard group. For a line bundle $\cL$ and a curve $C$, we write $\cL\cdot C=c_1(\cL)\cdot C$.

\subsection{Exceptional collections and mutations}\label{subsec:exceptional-mutations}

An object $E\in\Db(X)$ is exceptional if $\RHom(E,E)\simeq\mathbb C$. A sequence $(E_1,\ldots,E_\gamma)$ of exceptional objects is an exceptional collection if $\RHom(E_j,E_i)=0$ whenever $j>i$. It is full if its members generate $\Db(X)$ as a thick triangulated subcategory. For a collection $\cE$, let $\langle\cE\rangle$ denote its thick triangulated span. This subcategory is admissible \resultcite{Lemma~2.10}{Hu19}: the functor $V\mapsto V\otimes E$ associated with one exceptional object has right adjoint $\RHom(E,-)$ and left adjoint $\RHom(-,E)^\vee$, and one iterates this construction along the collection. Thus $\cE$ gives a semiorthogonal decomposition
\[
\Db(X)=\langle\cA,E_1,\ldots,E_\gamma\rangle,\qquad \cA=\langle\cE\rangle^\perp.
\]
Here $\langle\cE\rangle^\perp$ consists of objects $F$ with $\RHom(E_i,F)=0$ for every $i$; fullness is equivalent to $\cA=0$.

For an exceptional object $E$, the left and right mutations through $E$ are denoted by $\bL_E$ and $\bR_E$, respectively. They are defined by the evaluation and coevaluation triangles
\begin{align*}
\RHom(E,F)\otimes E&\longrightarrow F\longrightarrow\bL_E(F),\\
\bR_E(F)&\longrightarrow F\longrightarrow\RHom(F,E)^\vee\otimes E.
\end{align*}
Relative to \resultcite{Definition~2.8}{Hu19}, our left and right mutations are shifted by $[1]$ and $[-1]$, respectively. For an exceptional pair $(E,F)$, the mutated pairs $(\bL_E(F),E)$ and $(F,\bR_F(E))$ have the same thick span by \resultcite{Lemma~2.11}{Hu19}. Tensoring every member of a collection by the same line bundle preserves exceptionality, strongness, and fullness.

The Euler pairing is
\[
\chi(E,F)=\sum_q(-1)^q\dim_{\mathbb C}\Ext^q(E,F).
\]
A collection is numerically exceptional if its Euler matrix is upper triangular with diagonal entries one. It has maximal numerical length when its length equals the rank of the numerical Grothendieck group $K_0^{\mathrm{num}}(X)$, the quotient of $K_0(X)$ by the radical of this pairing. Fullness implies maximal numerical length.

By Serre duality, the Serre functor on an $n$-fold and its inverse are
\resultcite{equation~(10)}{Kuz15}
\begin{equation}\label{eq:2-1}
S_X(F)=F\otimes\omega_X[n],\qquad S_X^{-1}(F)=F\otimes\omega_X^{-1}[-n].
\end{equation}

\subsection{Exceptional line bundles and left-orthogonal divisors}\label{subsec:left-orthogonal}

For line bundles $\cL_i=\cO_X(D_i)$,
\begin{equation}\label{eq:2-2}
\Ext^q(\cL_i,\cL_j)=H^q(X,\cO_X(D_j-D_i)).
\end{equation}
In particular, a line bundle is exceptional precisely when $H^q(X,\cO_X)=0$ for all $q>0$. On a surface this is equivalent to $p_g(X)=q(X)=0$. Under this assumption a divisor class $D$ is called left-orthogonal if $H^q(X,\cO_X(-D))=0$ for every $q$, and strongly left-orthogonal if in addition $H^q(X,\cO_X(D))=0$ for $q>0$. Replacing the first vanishing by $\chi(\cO_X(-D))=0$ defines numerical left-orthogonality. These notions are defined in \resultcite{Definition~3.1}{HP11}.

The numerical identities used below are the Riemann--Roch calculation in \resultcite{Lemma~3.3(i)--(ii)}{HP11}, which only requires $\chi(\cO_X)=1$. Explicitly,
\[
\chi(\cO_X(D))=1+\tfrac12(D^2-K_XD).
\]
Consequently every numerically left-orthogonal class satisfies
\begin{equation}\label{eq:2-3}
D^2+K_XD=-2,\qquad \chi(\cO_X(D))=-K_XD=D^2+2.
\end{equation}
By \eqref{eq:2-2}, $(\cL_1,\ldots,\cL_\gamma)$ is exceptional precisely when every forward difference $D_j-D_i$, $i<j$, is left-orthogonal. It is strong precisely when these differences are strongly left-orthogonal. A nonzero forward morphism is a section of $\cO_X(D_j-D_i)$ and has a nonzero effective zero divisor in this class. The divisor is nonzero because distinct members of an exceptional collection cannot be isomorphic. Effective divisors and inequalities between them are always taken with their multiplicities.

\subsection{Kuznetsov's height}\label{subsec:height}

We recall the normal Hochschild construction of \resultcite{Definition~3.2 and \S3.3}{Kuz15} in notation adapted to an exceptional collection. Choose representatives of $\cE=(E_1,\ldots,E_\gamma)$ in an enhancement of $\Db(X)$. Its directed dg category retains the actual morphism complexes from $E_i$ to $E_j$ for $i<j$, has $\mathbb C\id_{E_i}$ on the diagonal, and is zero in reverse entries. This is quasi-equivalent to the full dg subcategory on these objects. Let $\mathcal A$ be its algebra and put
\[
S_0=\bigoplus_i\mathbb C e_i,\qquad
\mathcal J=\bigoplus_{i<j}e_j\mathcal A e_i,\qquad
\mathcal A=S_0\oplus\mathcal J,
\]
where $e_j\mathcal A e_i=\RHom(E_i,E_j)$. The normal bimodule $\mathcal T$ has entries
\begin{equation}\label{eq:2-4}
e_i\mathcal T e_j=\RHom(E_j,S_X^{-1}E_i).
\end{equation}
Its two actions are defined in the enhancement by composition and composition after applying $S_X^{-1}$, respectively.

\begin{definition}[{\resultcite{Definitions~3.2 and~4.1}{Kuz15}}]\label{def:2-1}
The normal Hochschild complex and height of $\cE$ are
\[
N_X(\cE)=\mathcal T\otimes^{\mathbf L}_{\mathcal A^{\mathrm{op}}\otimes\mathcal A}\mathcal A,
\qquad h_X(\cE)=\inf\{q:H^qN_X(\cE)\ne0\}.
\]
We set $\inf\varnothing=+\infty$ and write $\NHH^q(\cE,X)=H^qN_X(\cE)$.
\end{definition}

This is Kuznetsov's definition, with the description of the normal bimodule in \eqref{eq:2-4}; see \resultcite{Definition~3.2, equation~(19), Lemma~3.5, and Definition~4.1}{Kuz15}. In particular, the invariant depends on the enhanced embedding, including its composition maps.

\begin{proposition}[{\resultcite{Theorem~3.3 and Proposition~6.1}{Kuz15}}]\label{prop:2-2}
A full exceptional collection on a smooth connected projective variety has height zero. In particular, positive height obstructs fullness.
\end{proposition}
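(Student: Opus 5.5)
Proposition~\ref{prop:2-2} is Kuznetsov's result, and the plan is to recover it from the two cited ingredients. The first is the identification of normal Hochschild cohomology with the Hochschild cohomology of the orthogonal complement. The second is the fact that the Hochschild cohomology of a nonzero smooth proper dg category is nonzero in degree zero.

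\textbf{Step 1.} Recall the setup of \resultcite{Theorem~3.3}{Kuz15}. For the semiorthogonal decomposition $\Db(X)=\langle\cA,E_1,\ldots,E_\gamma\rangle$ of \S\ref{subsec:exceptional-mutations}, there is a distinguished triangle
\[
N_X(\cE)\longrightarrow \HH^\bullet(X)\longrightarrow \HH^\bullet(\cA)
\]
or its rotated variant. I would check two things about this triangle. First, it is compatible with the description \eqref{eq:2-4} of the normal bimodule, which uses $S_X^{-1}$ from \eqref{eq:2-1}. Second, $\HH^\bullet(\langle\cE\rangle)$ is $\mathbb C$ in degree zero, because the directed algebra $\mathcal A$ is smooth and proper with trivial center contributions. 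In the statement of \resultcite{Theorem~3.3}{Kuz15} the relevant map is the restriction $\HH^\bullet(X)\to\HH^\bullet(\cA)$, with cone controlled by $\NHH^\bullet(\cE,X)$ up to a shift.

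\textbf{Step 2.} Suppose $\cE$ is full, so $\cA=0$. Then $\HH^\bullet(\cA)=0$, and the triangle gives $N_X(\cE)\simeq\HH^\bullet(X)$, possibly up to a shift that must be tracked exactly. Since $X$ is connected, $\HH^0(X)=H^0(X,\cO_X)=\mathbb C\neq 0$. Negative Hochschild degrees vanish by the HKR decomposition
\[
\HH^q(X)=\bigoplus_{a+b=q}H^a(X,\wedge^b T_X).
\]
Hence $\inf\{q:\NHH^q(\cE,X)\neq 0\}=0$, that is $h_X(\cE)=0$.

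\textbf{Step 3.} The contrapositive gives the last sentence of the proposition. If $h_X(\cE)\geq 1$, then $\cE$ is not full; this is the form used in \resultcite{Proposition~6.1}{Kuz15}.

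\textbf{Main obstacle.} The delicate point is the degree normalization, namely that $N_X(\cE)$ maps to $\HH^\bullet(X)$ with no shift, so that the unit $1\in\HH^0(X)$ lands in $\NHH^0$. I would verify this directly. The identity $\id_{\mathcal A}$ pairs with the canonical element of $\mathcal T$ coming from the unit of the adjunction $\id\to S_X^{-1}\circ S_X$. For each $i$, this canonical element sits in $e_i\mathcal T e_i=\RHom(E_i,S_X^{-1}E_i)$ in degree zero, after accounting for the shift $[-n]$. The resulting class in $\NHH^0(\cE,X)$ is nonzero because it maps to the unit of $\HH^0(X)$ under the comparison map.
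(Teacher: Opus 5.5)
Your Steps~2--3 are the paper's proof: cite the localization triangle of \resultcite{Theorem~3.3}{Kuz15}, use $\cA=0$ to get $N_X(\cE)\simeq\HH(X)$, and read off $\HH^{<0}(X)=0$ and $\HH^0(X)=H^0(X,\cO_X)=\mathbb C$ from HKR. The shift you flag is not an open point. With the normalization of Definition~\ref{def:2-1}, Kuznetsov's theorem gives the triangle $N_X(\cE)\to\HH(X)\to\HH(\cA)$ with no shift, and rotating it changes nothing. You should simply cite it, as the paper does.

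The direct check you propose for this normalization is wrong and should be discarded.
\begin{itemize}
\item The diagonal entries are $e_i\mathcal T e_i=\RHom(E_i,S_X^{-1}E_i)=\RHom(E_i,E_i\otimes\omega_X^{-1})[-n]$. When $E_i$ is a sheaf these have no cohomology below degree $n$; for a line bundle the entry is $R\Gamma(X,\omega_X^{-1})[-n]$. So there is no canonical degree-zero element there.
\item The unit of $\id\to S_X^{-1}S_X$ lies in $\Hom(E_i,E_i)$, not in $\Hom(E_i,S_X^{-1}E_i)$.
\item In the bar model \eqref{eq:2-6}, the classes in $\NHH^0$ lifting $1\in\HH^0(X)$ live in bar length $p\ge n$. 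For $(\cO,\ldots,\cO(n))$ on $\mathbf P^n$ they come from cyclic chains $\cO\to\cO(1)\to\cdots\to\cO(n)\to S_{\mathbf P^n}^{-1}\cO=\cO(n+1)[-n]$. The last factor sits in internal degree $n$, which cancels the bar length $n$.
\item Proposition~\ref{prop:3-16} rests on exactly the opposite fact, $T^t=0$ for $t<2$ on a surface.
\end{itemize}

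Your side claim in Step~1 that $\HH^0(\langle\cE\rangle)=\mathbb C$ is also false in general. Mutually orthogonal exceptional objects such as $\cO_{E_1}(-1),\cO_{E_2}(-1)$ on a two-point blowup give $\mathbb C^2$. Neither claim is needed for the statement.
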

\begin{proof}
For a semiorthogonal decomposition $\Db(X)=\langle\cA,\cE\rangle$, the normal localization triangle is
\begin{equation}\label{eq:2-5}
N_X(\cE)\longrightarrow\HH(X)\longrightarrow\HH(\cA),
\end{equation}
by \resultcite{Theorem~3.3}{Kuz15}. If $\cE$ is full, then $\cA=0$ and the first arrow is a quasi-isomorphism. The Hochschild--Kostant--Rosenberg decomposition recalled in \resultcite{\S3.1}{Kuz15} gives $\HH^{<0}(X)=0$ and $\HH^0(X)=H^0(X,\cO_X)=\mathbb C$.
\end{proof}

\subsection{The normal Hochschild spectral sequence}\label{subsec:normal-spectral-sequence}

The following bar model and spectral sequence are those of \resultcite{Lemma~3.6, Proposition~3.7, and equation~(20)}{Kuz15}, written with our grading convention. Since $S_0$ is semisimple, the normalized relative bar resolution computes the derived tensor product in Definition~\ref{def:2-1}. Directionality gives $\mathcal J^{\otimes_{S_0}p}=0$ for $p\ge \gamma$. The normal complex therefore has a finite bar model whose length-$p$ term is
\begin{equation}\label{eq:2-6}
\bigoplus_{i_0<\cdots<i_p}
\RHom(E_{i_0},E_{i_1})\otimes\cdots\otimes
\RHom(E_{i_{p-1}},E_{i_p})\otimes
\RHom(E_{i_p},S_X^{-1}E_{i_0})[p].
\end{equation}
The differential is the sum of the internal differentials, signed adjacent compositions, and the two endpoint actions. For the action crossing the endpoint, the first arrow is transported by $S_X^{-1}$. The signs are the usual Koszul signs. This description is established in \resultcite{Proposition~3.7}{Kuz15}.

The filtration by bar length yields a convergent spectral sequence
\begin{equation}\label{eq:2-7}
\begin{aligned}
E_1^{-p,q}
&=\bigoplus_{\substack{i_0<\cdots<i_p\\q_0+\cdots+q_p=q}}
\Ext^{q_0}(E_{i_0},E_{i_1})\otimes\cdots\otimes
\Ext^{q_{p-1}}(E_{i_{p-1}},E_{i_p})\\
&\hspace{7em}\otimes\Ext^{q_p}(E_{i_p},S_X^{-1}E_{i_0})
\quad\Longrightarrow\quad\NHH^{q-p}(\cE,X).
\end{aligned}
\end{equation}
The first differential is Yoneda composition, including both endpoint actions. Thus it is the graded Hochschild chain differential for $A=H^*(\mathcal A)$ with coefficient bimodule $T=H^*(\mathcal T)$. The face maps and signs are given in \eqref{eq:3-10}--\eqref{eq:3-11}.

\section{Rationality of surfaces}\label{sec:rationality}

The proof has two parts. We first establish the height bound under a canonical-degree condition on effective differences. We then deform the blowup centers and descend a full exceptional collection through successive contractions. In Subsections~\ref{subsec:divisor-geometry}--\ref{subsec:bar-height}, $X$ is a smooth projective surface with $p_g(X)=q(X)=0$. Every nonzero forward morphism between exceptional line bundles defines an effective left-orthogonal divisor; its scheme structure, including component multiplicities, is retained throughout the argument.

\subsection{Geometry of effective left-orthogonal divisors}\label{subsec:divisor-geometry}

\begin{lemma}[\resultcite{Lemma~4.1 and Proposition~4.3}{Ela16}]\label{lem:3-1}
Let $D>0$ be an effective left-orthogonal divisor. Then
$H^0(D,\cO_D)=\mathbb C$ and $H^1(D,\cO_D)=0$. For every nonzero effective subdivisor $B\le D$,
\begin{equation}\label{eq:3-1}
 H^1(B,\cO_B)=0,
 \qquad B^2+K_XB=-2h^0(B,\cO_B)\le -2.
\end{equation}
Every prime component $C$ of $D$ is a smooth rational curve, with
$C^2=-2-K_XC$, and the support of $D$ is connected.
\end{lemma}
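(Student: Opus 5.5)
The plan is to read everything off the structure sequence of $D$ and the left-orthogonality vanishings. We start from
\[
0\to\cO_X(-D)\to\cO_X\to\cO_D\to0 .
\]
Since $D$ is left-orthogonal, $H^q(X,\cO_X(-D))=0$ for all $q$. Since $p_g(X)=q(X)=0$, we have $H^0(\cO_X)=\mathbb C$ and $H^1(\cO_X)=H^2(\cO_X)=0$. The long exact sequence then gives $H^0(D,\cO_D)=\mathbb C$ and $H^1(D,\cO_D)=0$ immediately. Connectedness of $\Supp D$ follows from $h^0(\cO_D)=1$: a disconnected support would split $\cO_D$ as a product and give $h^0\ge2$.

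For a nonzero subdivisor $B\le D$, write $D=B+B'$ with $B'\ge0$. We use the exact sequence
\[
0\to\cO_{B'}(-B)\to\cO_D\to\cO_B\to0 ,
\]
and since $D$ is a curve, $H^2(\cO_{B'}(-B))=0$. Hence $H^1(\cO_D)\to H^1(\cO_B)$ is surjective, so $H^1(B,\cO_B)=0$ (when $B'=0$ this is trivial). Riemann--Roch on $X$ with $\chi(\cO_X)=1$ gives
\[
\chi(\cO_B)=\chi(\cO_X)-\chi(\cO_X(-B))=-\tfrac12(B^2+K_XB).
\]
Therefore $B^2+K_XB=-2\chi(\cO_B)=-2h^0(\cO_B)$. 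Finally $h^0(\cO_B)\ge1$ because $B$ is a nonempty projective scheme, which yields \eqref{eq:3-1}.

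Now let $B=C$ be a prime component. Then $h^0(\cO_C)=1$ and $h^1(\cO_C)=0$, so the arithmetic genus of the integral curve $C$ is $0$. An integral curve of arithmetic genus zero is smooth and rational, since its normalization would otherwise drop $p_a$, and $p_a\ge0$ forces equality. The formula $C^2+K_XC=-2$ is exactly the case $h^0=1$ of \eqref{eq:3-1}, i.e. adjunction.

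The only slightly delicate step is the surjectivity $H^1(\cO_D)\to H^1(\cO_B)$ for nonreduced $D$. It rests on the sequence $0\to\cO_{B'}(-B)\to\cO_D\to\cO_B\to0$, which is exact for Cartier divisors on a smooth surface because the ideal of $B$ in $D$ is $\cO_X(-B)/\cO_X(-D)$. Beyond that, the whole argument is formal cohomology and Riemann--Roch.
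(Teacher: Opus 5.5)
Your proposal is correct and follows essentially the same route as the paper: the structure sequence of $D$ with left-orthogonality, surjectivity of $H^1(\cO_D)\to H^1(\cO_B)$ because the kernel has vanishing $H^2$, Riemann--Roch for $\chi(\cO_B)$, the normalization argument forcing $p_a(C)=0$ to give $C\simeq\mathbf P^1$, and the idempotent/splitting argument for connectedness. The only cosmetic differences are that you identify the kernel explicitly as $\cO_{B'}(-B)$, and you read $C^2+K_XC=-2$ off \eqref{eq:3-1} with $h^0=1$ rather than invoking adjunction separately.
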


\begin{proof}
The following proof expands the restriction-sequence arguments in
\resultcite{Lemma~4.1 and the proof of Theorem~4.5}{Ela16} and the component argument in
\resultcite{Proposition~4.3}{Ela16}.
The sequence $0\to\cO_X(-D)\to\cO_X\to\cO_D\to0$ gives the first assertions.
The kernel of the surjection $\cO_D\to\cO_B$ is supported in dimension at most one.
Its second cohomology vanishes, so $H^1(\cO_D)\to H^1(\cO_B)$ is surjective.
Subtracting Riemann--Roch for $\cO_X(-B)$ from that for $\cO_X$ gives
\[
 \chi(\cO_B)=-\frac12(B^2+K_XB)=h^0(\cO_B)\ge1,
\]
which proves \eqref{eq:3-1}. Let $C_i\le D$ be a reduced prime component, and let
$\nu_i\colon\widetilde C_i\to C_i$ be its normalization. Since $C_i$ is integral,
$h^0(\cO_{C_i})=1$; by \eqref{eq:3-1}, $h^1(\cO_{C_i})=0$. The normalization sequence
\[
 0\longrightarrow\cO_{C_i}\longrightarrow(\nu_i)_*\cO_{\widetilde C_i}
 \longrightarrow Q_i\longrightarrow0,
 \qquad\dim\Supp Q_i\le0,
\]
gives
\[
 0=p_a(C_i)=g(\widetilde C_i)+\operatorname{length}Q_i,
 \qquad g(\widetilde C_i)\ge0,
 \qquad\operatorname{length}Q_i\ge0.
\]
Thus $Q_i=0$, $\nu_i$ is an isomorphism, and
$C_i\simeq\widetilde C_i\simeq\mathbb P^1$. Adjunction gives
$C_i^2+K_XC_i=2p_a(C_i)-2=-2$. Finally, disconnected support would give a
nonconstant idempotent in $H^0(\cO_D)=\mathbb C$.

\end{proof}

\begin{lemma}\label{lem:3-2}
Let $D>0$ be effective and left-orthogonal, with $K_XC\ge0$ for every prime component $C$ of $D$.
Then $h^0(X,\cO_X(D))=1$. Moreover, if $C_0,E_0>0$ are effective left-orthogonal divisors,
$K_X$ is nonnegative on every component of $E_0$, and $C_0-E_0$ is left-orthogonal,
then $E_0<C_0$ as actual divisors.
\end{lemma}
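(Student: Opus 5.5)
We split the proof into the two assertions and use Lemma~\ref{lem:3-1} throughout, together with the restriction sequences for effective subdivisors.

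\emph{Uniqueness of $D$ in its linear system.} The sequence $0\to\cO_X\to\cO_X(D)\to\cO_D(D)\to0$ and $H^1(X,\cO_X)=0$ give
\[
h^0(\cO_X(D))=1+h^0(\cO_D(D)).
\]
So it suffices to show $H^0(\cO_D(D))=0$. We prove the stronger claim that $H^0(\cO_B(B))=0$ for every nonzero effective subdivisor $B\le D$, by induction on the total multiplicity of $B$.

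By \eqref{eq:3-1} and the hypothesis $K_XC\ge0$ on components,
\[
B^2=-2h^0(\cO_B)-K_XB<0 .
\]
Since $B$ is effective and $B^2<0$, some prime component $C\le B$ satisfies $B\cdot C<0$. Put $B'=B-C$. The sequence
\[
0\to\cO_{B'}(B')\to\cO_B(B)\to\cO_C(B)\to0
\]
reduces the claim to two vanishings. First, $H^0(\cO_C(B))=0$, because $C\simeq\mathbb P^1$ and $\deg\cO_C(B)=B\cdot C<0$. Second, $H^0(\cO_{B'}(B'))=0$, which is the induction hypothesis; if $B'=0$ there is nothing to prove. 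The base case $B=C$ is the same argument with $C^2=-2-K_XC<0$.

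\emph{The nesting statement.} First I would record a numerical consequence of \eqref{eq:2-3}, applied to $C_0$, $E_0$ and $C_0-E_0$. Each satisfies $Z^2+K_XZ=-2$. Expanding the identity for $C_0-E_0$ and substituting the other two gives
\[
C_0\cdot E_0=-1-K_XE_0\le-1,
\]
where the inequality uses that $K_X$ is nonnegative on the components of $E_0$. Since $C_0$ is effective and $C_0\cdot E_0<0$, the two divisors share a component.

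Next, write $F$ for the largest divisor with $F\le C_0$ and $F\le E_0$. Set $C_0=F+C'$ and $E_0=F+G$, where $C'$ and $G$ have no common component, so $C'\cdot G\ge0$. The goal is to show $G=0$. Strictness then follows because $C_0-E_0$ is left-orthogonal, hence nonzero, since $H^0(\cO_X)\ne0$.

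To rule out $G>0$, I would combine three ingredients:
\begin{itemize}
\item the identities \eqref{eq:3-1} for the subdivisors $F,G\le E_0$ and $F\le C_0$, which give $F\cdot G=h^0(\cO_F)+\ldots$-type formulas as in the computation of $B\cdot C$ above;
\item the inequalities $K_XG\ge0$ and $C'\cdot G\ge0$;
\item the formula for $C_0\cdot E_0$ above.
\end{itemize}
If the numerics alone do not close, I would fall back on cohomology. The sequences $0\to\cO(-C_0)\to\cO(E_0-C_0)\to\cO_{E_0}(E_0-C_0)\to0$ show that $\cO_{E_0}(E_0-C_0)$ has no cohomology. Restricting the section of $\cO_X(C_0)$ to $G$, where it is nonzero, should then give a nonzero map that contradicts this acyclicity, using the degree bookkeeping on $G$ from the first part.

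I expect the main obstacle to be this last step, excluding $G>0$. The first part and the intersection number $C_0\cdot E_0<0$ are routine. Extracting a contradiction from the common-part decomposition needs a careful choice of filtration on $G$.
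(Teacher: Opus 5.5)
Your proof of $h^0(X,\cO_X(D))=1$ is correct, and it differs from the paper's. The paper takes the mobile part $M\le D$ of $|D|$ and notes that $M^2\ge0$ and $K_XM\ge0$ contradict \eqref{eq:3-1}. You instead use $H^1(\cO_X)=0$ to reduce to $H^0(\cO_D(D))=0$, and prove that vanishing by peeling off components $C$ with $B\cdot C<0$. Both arguments work: yours is a direct vanishing argument, while the paper's needs only \eqref{eq:3-1} and $M^2\ge0$ for a divisor without fixed components.

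The nesting assertion, however, is not proved. Excluding $G>0$ is the whole content of the claim, and you leave it as a list of ingredients.

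\emph{The case $C'$ and $G$ both nonzero.} Apply \eqref{eq:3-1} to the residual pieces $C'\le C_0$ and $G\le E_0$, and expand \eqref{eq:2-3} for $C_0-E_0=C'-G$:
\[
 -2=(C'^2+K_XC')+(G^2+K_XG)-2K_XG-2C'G\le-4
 \qquad(C'\ne0\ne G),
\]
using $K_XG\ge0$ and $C'G\ge0$. The common part $F$ cancels, so neither \eqref{eq:3-1} for $F$ nor your formula for $C_0\cdot E_0$ is needed.

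\emph{The case $C'=0<G$, i.e.\ $C_0<E_0$.} Your plan overlooks this case, and no intersection-number bookkeeping can exclude it. For example, take two $(-2)$-curves $F,G$ with $FG=1$ and $K_XF=K_XG=0$, and set $C_0=F$, $E_0=F+G$; this satisfies every numerical identity in play. The case is excluded only by the actual vanishing $H^0(X,\cO_X(E_0-C_0))=0$, which fails because $\cO_X(E_0-C_0)=\cO_X(G)$ has a nonzero section.

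Your cohomological fallback does not fill this gap. It specifies no map. Moreover, the section of $\cO_X(C_0)$ need not restrict to something nonzero on $G$: a component with positive but smaller multiplicity in $C_0$ than in $E_0$ lies in both $F$ and $G$, and the restriction can vanish there.

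A cohomological argument does work if you restrict to all of $E_0$. First, $s_{C_0}|_{E_0}\ne0$ exactly when $E_0\not\le C_0$. Second, Serre duality on $E_0$ turns the acyclicity of $\cO_{E_0}(E_0-C_0)$ into $H^0(\cO_{E_0}(K_X+C_0))=0$. Third, multiplying $s_{C_0}|_{E_0}$ by a nonzerodivisor section of $\omega_X|_{E_0}$ from Lemma~\ref{lem:3-3} gives a contradiction. This route is much heavier than the numerical argument above.
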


\begin{proof}
If $|D|$ moved, remove its fixed part and take a nonzero moving member $M\le D$.
Two general members of $|M|$ have no common component, giving $M^2\ge0$.
Since $K_XM\ge0$, this contradicts \eqref{eq:3-1}.
Thus $D$ is the unique effective divisor in its linear equivalence class.

For the second assertion, let $G$ be the componentwise common divisor of $C_0$ and $E_0$,
and write $B=C_0-G$, $F=E_0-G$. The divisors $B$ and $F$ have no common component.
If both are nonzero, then
\[
 \begin{aligned}
 (C_0-E_0)^2+K_X(C_0-E_0)
 &=(B^2+K_XB)+(F^2+K_XF)-2K_XF-2BF\\
 &\le-4,
 \end{aligned}
\]
contrary to \eqref{eq:2-3}. If $B=0$, the nonzero section of
$\cO_X(F)=\cO_X(-(C_0-E_0))$ contradicts left-orthogonality; this also excludes
$B=F=0$. Hence $F=0$ and $B>0$.
\end{proof}

\begin{lemma}\label{lem:3-3}
For an effective left-orthogonal divisor $D>0$, component degrees identify
\[
 \Pic(D)\simeq\mathbb Z^{\{\text{prime components of }D\}}.
\]
A line bundle $\cN$ of nonnegative degree on every prime component has a section that is
a nonzerodivisor on $D$. Consequently, for every line bundle $\cM$ on $D$,
\begin{equation}\label{eq:3-2}
 H^1(D,\cM)=0\quad\Longrightarrow\quad H^1(D,\cM\otimes\cN)=0.
\end{equation}
\end{lemma}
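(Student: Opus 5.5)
We would prove the three assertions in order, with the Picard computation as the base. For $\Pic(D)$, use the exponential-type filtration. Since $H^1(D,\cO_D)=0$ by Lemma~\ref{lem:3-1}, the sheaf sequence $0\to\mathcal I\to\cO_D^*\to\cO_{D_{\mathrm{red}}}^*\to 1$ is the natural tool. Here $\mathcal I$ is built from the nilpotent ideal $\cJ$ of $D_{\mathrm{red}}$ in $D$. We filter $\cJ$ by powers $\cJ^{k}$, or better by a chain of subdivisors $D_{\mathrm{red}}=B_1<B_2<\cdots<B_m=D$ in which each step adds one prime component $C$. At each step the kernel of $\cO^*_{B_{k+1}}\to\cO^*_{B_k}$ is isomorphic to the additive sheaf $\cO_C(-B_k)$, because the square of this ideal vanishes. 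From $0\to\cO_C(-B_k)\to\cO_{B_{k+1}}\to\cO_{B_k}\to0$ and $H^1(\cO_{B_{k+1}})=0$ (by \eqref{eq:3-1}), we get $H^1(\cO_C(-B_k))\to H^1(\cO_{B_{k+1}})$. That map is not directly zero, so we argue with the groups instead.

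The long exact sequence of units gives $H^1(\cO_C(-B_k))\to\Pic(B_{k+1})\to\Pic(B_k)\to H^2=0$. Comparing with the additive sequence and using $h^0(\cO_{B})=1$ for all $B$ (from \eqref{eq:3-1}: $h^0(\cO_B)=-\tfrac12(B^2+K_XB)$ and $h^1(\cO_B)=0$), we get the following. Since $H^0(\cO_{B_{k+1}})\to H^0(\cO_{B_k})$ is a surjection $\mathbb C\to\mathbb C$, the additive connecting map $H^0(\cO_{B_k})\to H^1(\cO_C(-B_k))$ is zero. Hence $H^1(\cO_C(-B_k))\hookrightarrow H^1(\cO_{B_{k+1}})=0$. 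So $H^1(\cO_C(-B_k))=0$ and $\Pic(B_{k+1})\cong\Pic(B_k)$. In characteristic zero one can alternatively use truncated logarithms.

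For the reduced curve $D_{\mathrm{red}}$, the components are smooth rational and the dual graph is a tree with transversal-type intersections. Both facts follow from $H^1(\cO_{D_{\mathrm{red}}})=0$ and $h^0=1$. Hence $\Pic(D_{\mathrm{red}})\to\prod\Pic(C)=\mathbb Z^{\{C\}}$ is an isomorphism: the kernel is controlled by $H^1$ of units on the tree, which vanishes. Checking this tree statement carefully, including that the reduced connected curve with $p_a=0$ has trivial $\mathbb G_m$-kernel, is the main technical point. We expect to handle it by the normalization sequence $1\to\cO^*_{D_{\mathrm{red}}}\to\prod\cO^*_{C}\to Q\to1$ and an $h^1=0$ count.

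For sections, we induct along the same chain, now built so that each added component is glued last. Given $\cN$ with nonnegative degrees, $\cN|_C\cong\cO_{\mathbb P^1}(d)$ with $d\ge0$. We choose a section on $D_{\mathrm{red}}$ that is nonvanishing at the finitely many singular points and generic on each component, which is possible on a tree of $\mathbb P^1$'s by choosing sections componentwise compatibly at nodes. We then lift it step by step, which needs $H^1(\cN|_C(-B_k))=0$. Instead of proving that vanishing, we use the preceding isomorphism: $\cN$ is the unique line bundle with its multidegree. So it suffices to construct, for each multidegree, an explicit effective Cartier divisor $Z$ on $D$ with the given degrees and finite support off $\operatorname{Sing}D_{\mathrm{red}}$ and the embedded locus. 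Locally at a smooth point $x$ of $D_{\mathrm{red}}$ the scheme $D$ is $\{t^m=0\}$, and we take $Z$ to be cut out by a local coordinate transverse to $C$, which is a nonzerodivisor. Its degree on $C$ is one, so sums of such points realize every nonnegative multidegree, and $\cO_D(Z)\cong\cN$ with the canonical section a nonzerodivisor.

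Finally, \eqref{eq:3-2} follows from the exact sequence $0\to\cM\xrightarrow{s}\cM\otimes\cN\to(\cM\otimes\cN)|_Z\to0$. The cokernel is supported on the finite scheme $Z$, so its $H^1$ vanishes. Hence $H^1(\cM)\to H^1(\cM\otimes\cN)$ is surjective, and the implication follows.
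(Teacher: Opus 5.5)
Your treatment of $D_{\mathrm{red}}$ (a tree of transverse $\mathbb P^1$'s, handled by the normalization sequence for units) follows the paper. So do the nonzerodivisor section built from transverse local equations at smooth points of $D_{\mathrm{red}}$ and the final cohomology sequence.

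The gap is in the nilpotent step. Your component-by-component chain does give square-zero kernels $\cO_C(-B_k)$, and that part is fine in any characteristic. But to kill $H^1(\cO_C(-B_k))$ you need $H^0(\cO_{B_{k+1}})\to H^0(\cO_{B_k})$ to be surjective, and you obtain this from ``$h^0(\cO_B)=1$ for all $B$''. That does not follow from \eqref{eq:3-1}, which only gives $h^0(\cO_B)=-\tfrac12(B^2+K_XB)\ge1$. It is false even for $D_{\mathrm{red}}\le B\le D$, as the following example shows.

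Blow up $\mathbb P^2$ at $p_1$, then at a point of $E_1$, then at the point where the strict transform of $E_1$ meets $E_2$, then at the point where the new strict transform of $E_1$ meets $E_3$. The total transform $D$ of $E_1$ is left-orthogonal by the projection formula. Its support is a chain $C_1-C_2-C_3-C_4$ with self-intersections $-4,-1,-2,-2$, and $D=C_1+3C_2+2C_3+C_4$.

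Since $D_{\mathrm{red}}\cdot C_3=0$, the step $B_2=D_{\mathrm{red}}+C_3$ has kernel $\cO_{C_3}(-D_{\mathrm{red}})\simeq\cO_{\mathbb P^1}$, so $h^0(\cO_{B_2})=2$. The next step $B_3=B_2+C_2$ has kernel of degree $-B_2\cdot C_2=-2$, so $H^1(\cO_{C_2}(-B_2))\simeq\mathbb C\neq0$. For this admissible ordering your argument gives no control of the kernel of $\Pic(B_3)\to\Pic(B_2)$. The isomorphism happens to hold, but not for the reason you give.

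The repair is the paper's: compare $D$ with $D_{\mathrm{red}}$ in a single step rather than filtering. For $\cI_D=\ker(\cO_D\to\cO_{D_{\mathrm{red}}})$, the map $H^0(\cO_D)=\mathbb C\to H^0(\cO_{D_{\mathrm{red}}})=\mathbb C$ is the identity on constants. This uses only $h^0(\cO_D)=1$ from Lemma~\ref{lem:3-1} and connectedness of the reduced support. Then $H^1(\cO_D)=0$ gives $H^0(\cI_D)=H^1(\cI_D)=0$, and $H^2(\cI_D)=0$ for dimension reasons.

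The price is that $\cI_D$ is only nilpotent, not square-zero, so identifying $1+\cI_D$ with $\cI_D$ needs the truncated logarithm. That is exactly the ``alternative'' you mention in passing. It is the version that works, precisely because it is applied to the whole nilradical at once; applied to square-zero steps it adds nothing.

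If you want to keep square-zero steps, for instance to avoid characteristic zero, you must show by other means that the image of $H^1(\cO_C(-B_k))$ in $\Pic(B_{k+1})$ is trivial even when that group is nonzero. The appendix does this in Lemma~\ref{lem:appendix-picard}: $H^1(\cO_{B_{k+1}})=0$ makes the Picard scheme discrete and reduced, so a connected vector group maps trivially into it. Alternatively, you would have to prove the existence of an ordering with $B_k\cdot C=1$ at every step.
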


\begin{proof}
The description of the reduced support is \resultcite{Proposition~4.3}{Ela16}.
We expand that argument using normalization, before computing the Picard group of the possibly nonreduced divisor.
Write $D=\sum_{i=1}^a m_iC_i$ and $Y=D_{\mathrm{red}}=\bigcup_{i=1}^a C_i$.
By Lemma~\ref{lem:3-1},
\[
 H^0(Y,\cO_Y)=\mathbb C,
 \qquad H^1(Y,\cO_Y)=0,
 \qquad C_i\simeq\mathbb P^1.
\]
For the normalization $\nu\colon\widetilde Y=\coprod_i C_i\to Y$,
\[
 0\longrightarrow\cO_Y\longrightarrow\nu_*\cO_{\widetilde Y}
 \longrightarrow Q\longrightarrow0,
 \qquad\delta_p(Y):=\operatorname{length}Q_p.
\]
For $i\ne j$ and $p\in C_i\cap C_j$, choose local equations $f_i,f_j\in\cO_{X,p}$
for $C_i,C_j$. Their local intersection multiplicity is
\[
 I_p(C_i,C_j):=\operatorname{length}_{\cO_{X,p}}
 \bigl(\cO_{X,p}/(f_i,f_j)\bigr)
 =\dim_{\mathbb C}\bigl(\cO_{X,p}/(f_i,f_j)\bigr)\ge1.
\]
This length is finite since $C_i$ and $C_j$ have no common component.
Apply the two-divisor exact sequence of \resultcite{equation~(4.4)}{Ela16} successively to unions of branches.
On adjoining a smooth branch, comparison with normalization increases the cokernel length by its
intersection multiplicity with the existing union. This multiplicity is the sum of the pairwise
multiplicities, since the local ring of the new branch is a discrete valuation ring. It follows that
\[
 \delta_p(Y)=\sum_{\substack{i<j\\p\in C_i\cap C_j}}I_p(C_i,C_j),
 \qquad\sum_p\delta_p(Y)=\chi(\cO_{\widetilde Y})-\chi(\cO_Y)=a-1.
\]
Let $\Gamma$ be the simple graph with vertices $C_i$ and an edge $ij$ whenever
$C_i\cap C_j\ne\varnothing$. Connectedness of $Y$ gives
\[
 a-1\le |E(\Gamma)|\le\sum_{i<j}C_i\cdot C_j
 =\sum_p\delta_p(Y)=a-1.
\]
Thus $\Gamma$ is a tree and every nonzero $C_i\cdot C_j$ equals one.
Three components through one point would give a triangle in $\Gamma$.
Hence every singular point has exactly two transverse branches:
\[
 \widehat{\cO}_{Y,p}\simeq\mathbb C[[u,v]]/(uv)
 \qquad(p\in\operatorname{Sing}Y).
\]
Orient $\Gamma$ away from a root. For an edge $e\colon i\to j$, let
$p_e=C_i\cap C_j$ and $\iota_e\colon\{p_e\}\hookrightarrow Y$. Put
\[
 \mathcal U:=\nu_*\cO_{\widetilde Y}^{\times},
 \qquad\mathcal S:=\prod_{e\in E(\Gamma)}(\iota_e)_*\mathbb C^{\times}.
\]
Here $\cO_Y^{\times}$ denotes the sheaf of invertible regular functions; each factor of
$\mathcal S$ is the skyscraper sheaf with stalk $\mathbb C^{\times}$ at $p_e$.
The normalization sequence for units is
\[
 1\longrightarrow\cO_Y^{\times}\xrightarrow{\nu^*}\mathcal U
 \xrightarrow{q}\mathcal S\longrightarrow1,
 \qquad q((u_i))_e=\frac{u_j(p_e)}{u_i(p_e)}.
\]
Exactness can be checked at the nodes; away from them $\nu$ is an isomorphism.
At $p=p_e$, transversality gives
\[
 \begin{aligned}
 \cO_{Y,p}&\simeq\cO_{C_i,p}\times_{\mathbb C}\cO_{C_j,p},\\
 \cO_{Y,p}^{\times}
 &=\{(u_i,u_j)\in\cO_{C_i,p}^{\times}\times\cO_{C_j,p}^{\times}
       :u_i(p)=u_j(p)\}.
 \end{aligned}
\]
Thus $\ker q_p=\cO_{Y,p}^{\times}$, and $q_p(1,\lambda)=\lambda$ proves
surjectivity for every $\lambda\in\mathbb C^{\times}$.

Write $H^r(\mathcal F)=H^r(Y,\mathcal F)$. The associated long exact sequence begins
\[
 \begin{aligned}
 1&\longrightarrow H^0(\cO_Y^{\times})\longrightarrow H^0(\mathcal U)
 \xrightarrow{q_*}H^0(\mathcal S)\xrightarrow{\partial}H^1(\cO_Y^{\times})\\
 &\longrightarrow H^1(\mathcal U)\longrightarrow H^1(\mathcal S)
 \longrightarrow H^2(\cO_Y^{\times})\longrightarrow\cdots.
 \end{aligned}
\]
For the component inclusions $j_i\colon C_i\hookrightarrow Y$, direct image is exact,
so $\mathcal U=\prod_i(j_i)_*\cO_{C_i}^{\times}$ gives the following identifications.
Here $H^1(\cO^{\times})=\Pic$ is \resultcite{Tag~09NU}{Stacks}, and
$\Pic(\mathbb P^1)=\mathbb Z$ is \resultcite{Tag~0BXJ}{Stacks}:
\[
 \begin{aligned}
 H^0(\cO_Y^{\times})&=\mathbb C^{\times},
 &H^0(\mathcal U)&=(\mathbb C^{\times})^a,\\
 H^1(\cO_Y^{\times})&=\Pic(Y),
 &H^1(\mathcal U)&=\prod_i\Pic(C_i)\simeq\mathbb Z^a.
 \end{aligned}
\]
Every restriction map of $\mathcal S$ is surjective. Hence $\mathcal S$ is flasque and
\[
 H^0(\mathcal S)=(\mathbb C^{\times})^{a-1},
 \qquad H^r(\mathcal S)=0\quad(r>0).
\]
Substitution yields the exact sequence
\[
 1\longrightarrow\mathbb C^{\times}\xrightarrow{\Delta}(\mathbb C^{\times})^a
 \xrightarrow{\beta}(\mathbb C^{\times})^{a-1}
 \xrightarrow{\partial}\Pic(Y)\xrightarrow{\deg}\mathbb Z^a\longrightarrow0,
\]
where
\[
 \Delta(c)=(c,\ldots,c),
 \qquad\beta((\mu_i))=(\mu_j\mu_i^{-1})_{e:i\to j},
 \qquad\deg\cL=(\deg(\cL|_{C_i}))_i.
\]
The connecting map sends $(\lambda_e)$ to the line bundle obtained by gluing
$\cO_{C_i}$ with fiber identifications $s_j(p_e)=\lambda_e s_i(p_e)$.
For any $(\lambda_e)$, the recursion
\[
 \mu_{\mathrm{root}}=1,
 \qquad\mu_j=\lambda_e\mu_i\quad(e\colon i\to j)
 \quad\Longrightarrow\quad\beta((\mu_i))=(\lambda_e)
\]
is well-defined on the tree. Thus $\beta$ is surjective and exactness gives
$\ker(\deg)=\operatorname{im}\partial=0$; the terminal zero gives surjectivity of $\deg$.
Therefore $\Pic(Y)\xrightarrow{\sim}\mathbb Z^a$.

For passage from $Y$ to $D$, we use the units-sequence argument in
\resultcite{proof of Tag~0C6S}{Stacks}, writing its kernel explicitly by means of the finite logarithm.
Put $\cI_D=\ker(\cO_D\to\cO_Y)$ and choose $r\ge2$ with $\cI_D^r=0$.
The additive restriction sequence and its cohomology sequence give
\[
 0\longrightarrow\cI_D\longrightarrow\cO_D\longrightarrow\cO_Y\longrightarrow0,
\]
\[
 0\longrightarrow H^0(D,\cI_D)\longrightarrow\mathbb C
 \xrightarrow{\id}\mathbb C\longrightarrow H^1(D,\cI_D)
 \longrightarrow H^1(D,\cO_D)=0.
\]
Thus $H^0(D,\cI_D)=H^1(D,\cI_D)=0$, and $H^2(D,\cI_D)=0$ since $\dim D=1$.
The finite logarithm is an isomorphism of abelian sheaves
\[
 \mathcal G_D:=1+\cI_D\xrightarrow[\sim]{\log}\cI_D,
\]
\[
 \log(1+b)=\sum_{k=1}^{r-1}\frac{(-1)^{k+1}b^k}{k},
 \qquad\exp(b)=\sum_{k=0}^{r-1}\frac{b^k}{k!}.
\]
Indeed, the formal identities for logarithm and exponential terminate on the nilpotent ideal;
commutativity gives $\log((1+b)(1+c))=\log(1+b)+\log(1+c)$, and the displayed maps are inverse.
Thus $H^j(D,\mathcal G_D)=0$ for $j=1,2$, and
\[
 1\longrightarrow\mathcal G_D\longrightarrow\cO_D^{\times}
 \longrightarrow\cO_Y^{\times}\longrightarrow1,
\]
\[
 0=H^1(D,\mathcal G_D)\longrightarrow\Pic(D)\longrightarrow\Pic(Y)
 \longrightarrow H^2(D,\mathcal G_D)=0.
\]
Consequently $\Pic(D)\xrightarrow{\sim}\Pic(Y)\xrightarrow{\deg}\mathbb Z^a$.

Choose $z_i\in C_i\setminus\bigcup_{j\ne i}C_j$.
Choose regular parameters $u,v$ in $\cO_{X,z_i}$ with $C_i=(u=0)$.
After shrinking a neighborhood of $z_i$, the zero scheme of $v$ on $D$ is supported only at $z_i$;
gluing its ideal to $\cO_D$ away from $z_i$ defines a subscheme $Z_i\subset D$.
In the completed local ring,
\[
 \widehat{\cO}_{D,z_i}\simeq\mathbb C[[u,v]]/(u^{m_i}),
 \qquad C_i=(u=0),
 \qquad (u^{m_i}):v=(u^{m_i}).
\]
Thus the local equation $v=0$ defines an effective Cartier divisor $Z_i$ on $D$, supported at $z_i$, with
\[
 \deg\bigl(\cO_D(Z_i)|_{C_j}\bigr)=\delta_{ij}.
\]
It follows that
\[
 n_i:=\deg(\cN|_{C_i})\ge0
 \quad\Longrightarrow\quad\cN\simeq\cO_D(Z),
 \qquad Z:=\sum_i n_iZ_i.
\]
Its defining section $s_Z$ is a nonzerodivisor. Hence
\[
 0\longrightarrow\cM\xrightarrow{\cdot s_Z}\cM\otimes\cN
 \longrightarrow(\cM\otimes\cN)|_Z\longrightarrow0,
 \qquad H^1\bigl(Z,(\cM\otimes\cN)|_Z\bigr)=0,
\]
which proves \eqref{eq:3-2}.
\end{proof}

The preceding vanishing statement controls multiplication by a defining section.
Applied to differences of members of an exceptional collection, the next proposition will give
injectivity of the composition maps on $\Ext^1$.

\begin{proposition}\label{prop:3-4}
Suppose $D>0$ is effective and left-orthogonal, $K_X$ is nonnegative on every component of $D$,
and $A,A+D$ are left-orthogonal classes. Multiplication by the defining section of $D$ induces an injection
\[
 H^1(X,\cO_X(A))\longrightarrow H^1(X,\cO_X(A+D)).
\]
The same map is an isomorphism on $H^0$ and a surjection on $H^2$.
\end{proposition}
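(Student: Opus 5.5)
Multiplication by the defining section $s_D$ gives the short exact sequence
\[
 0\longrightarrow\cO_X(A)\xrightarrow{\cdot s_D}\cO_X(A+D)\longrightarrow\cO_D(A+D)\longrightarrow0,
\]
and we will read everything off its long exact cohomology sequence. Since $A$ and $A+D$ are left-orthogonal, $H^q(\cO_X(-A))=H^q(\cO_X(-A-D))=0$ for all $q$, and by \eqref{eq:2-3} the Euler characteristics satisfy $\chi(\cO_X(A))=-K_XA$ and $\chi(\cO_X(A+D))=-K_X(A+D)$. In the long exact sequence
\[
 0\to H^0(\cO_X(A))\to H^0(\cO_X(A+D))\to H^0(\cO_D(A+D))\xrightarrow{\partial} H^1(\cO_X(A))\to H^1(\cO_X(A+D))\to H^1(\cO_D(A+D))\to H^2(\cO_X(A))\to H^2(\cO_X(A+D))\to0,
\]
surjectivity on $H^2$ is automatic because $D$ is a curve. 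So we only need to show that $H^0(\cO_X(A+D))\to H^0(\cO_D(A+D))$ is zero, which gives both the isomorphism on $H^0$ and $\partial=0$, i.e.\ injectivity on $H^1$. It is enough to prove $H^0(D,\cO_D(A+D))=0$.

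Write $\cM=\cO_D(A+D)$ and compute its Euler characteristic: $\chi(\cM)=\chi(\cO_X(A+D))-\chi(\cO_X(A))=-K_XD$. By hypothesis $K_X\cdot C\ge0$ on every component $C$ of $D$, so $\chi(\cM)\le0$. Hence $H^0(\cM)=0$ would follow from $H^1(D,\cM)=0$ together with $\chi(\cM)=0$, or more directly from $h^0(\cM)=h^1(\cM)+\chi(\cM)$. We aim for $H^1(D,\cM)=0$ and $K_XD=0$ is not assumed, so we instead bound $h^0$ directly. The natural tool is Serre duality on the Gorenstein curve $D$: $H^0(D,\cM)^\vee\simeq H^1(D,\omega_D\otimes\cM^{-1})$ with $\omega_D=\cO_D(K_X+D)$, so $\omega_D\otimes\cM^{-1}=\cO_D(K_X-A)$. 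We then write $\cO_D(K_X-A)=\cO_D\otimes\cN$ and try to apply \eqref{eq:3-2} with $H^1(D,\cO_D)=0$ (Lemma~\ref{lem:3-1}). For this we need $\cN=\cO_D(K_X-A)$ to have nonnegative degree on each component $C$, i.e.\ $K_XC\ge A\cdot C$.

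This degree inequality is the main obstacle, and we would first get control of $A\cdot C$ from the left-orthogonality of both $A$ and $A+D$. If some component degree is negative, the plan is to argue componentwise with a filtration of $D$ by effective subdivisors $B\le D$. Peeling off one component $C$ at a time gives sequences $0\to\cO_C(A+B)\to\cO_{B+C}(A+B+C)\to\cO_B(A+B)\to0$ (suitably twisted). We then show $H^0$ vanishes on each graded piece $\cO_C(\cdot)\simeq\cO_{\mathbb P^1}(d)$ with $d<0$, choosing the order of removal, as in the tree structure from Lemma~\ref{lem:3-3}, so that each degree is negative. The equality $\chi(\cM)=-K_XD\le0$ and the adjunction identity $C^2=-2-K_XC$ from Lemma~\ref{lem:3-1} supply the numerical input for choosing such an order. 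We expect this bookkeeping to be the delicate part. If it fails, we fall back on using Lemma~\ref{lem:3-2}, which gives $h^0(\cO_X(A+D))$-type rigidity, to show directly that any section of $\cO_X(A+D)$ vanishes on $D$.
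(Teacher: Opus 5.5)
Your setup matches the paper's: the restriction sequence for $\cO_X(A)\to\cO_X(A+D)$, the reduction to $H^0(D,\cO_D(A+D))=0$, and Serre duality on the Gorenstein curve $D$ with $\omega_D=\cO_D(K_X+D)$, which turns this into $H^1(D,\cO_D(K_X-A))=0$. The gap is in the final vanishing. You factor $\cO_D(K_X-A)=\cO_D\otimes\cN$ with $\cN=\cO_D(K_X-A)$, so you need $K_X\cdot C\ge A\cdot C$ on every component $C$ of $D$. That inequality is false in general. Suppose $D=C$ is a single curve with $K_XC=0$. Left-orthogonality of $A$ and $A+C$ makes $\cO_C(-A)\simeq\cO_{\mathbf P^1}(-A\cdot C)$ acyclic, which forces $A\cdot C=1>K_X\cdot C$. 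This occurs on $\mathbb F_2$ with $D$ the $(-2)$-section and $A$ a fiber.

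Your componentwise peeling remedy is never carried out. Your fallback is also insufficient. Showing that every section of $\cO_X(A+D)$ vanishes on $D$ makes the restriction map $\rho\colon H^0(\cO_X(A+D))\to H^0(\cO_D(A+D))$ zero. That gives the $H^0$ isomorphism, but it makes $\partial$ injective, not zero, so it says nothing about $H^1$. Your sentence claiming that $\rho=0$ yields $\partial=0$ contains the same slip. What you actually need is $H^0(D,\cO_D(A+D))=0$, as you say next.

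The missing idea is to use the left-orthogonality of $A$ and $A+D$ on the dual side before twisting. In the sequence
\[
 0\longrightarrow\cO_X(-A-D)\longrightarrow\cO_X(-A)\longrightarrow\cO_D(-A)\longrightarrow0
\]
the first two terms are acyclic, so $\cO_D(-A)$ is acyclic; in particular $H^1(D,\cO_D(-A))=0$. Now apply \eqref{eq:3-2} with $\cM=\cO_D(-A)$ and $\cN=\omega_X|_D$, so that $\cM\otimes\cN=\cO_D(K_X-A)$. The hypothesis that $K_X$ is nonnegative on every component of $D$ is exactly the degree condition on $\cN$. This gives $H^1(D,\cO_D(K_X-A))=0$, hence $H^0(D,\cO_D(A+D))=0$ by duality. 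Your long exact sequence then yields all three assertions at once. This is the paper's argument: the canonical-degree hypothesis is meant to control $\omega_X|_D$, not $\cO_D(K_X-A)$.
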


\begin{proof}
The sequence
\[
 0\longrightarrow\cO_X(-A-D)\longrightarrow\cO_X(-A)
 \longrightarrow\cO_D(-A)\longrightarrow0
\]
shows that $\cO_D(-A)$ is acyclic. Since $\omega_X|_D$ has nonnegative component degrees,
Lemma~\ref{lem:3-3} gives $H^1(D,\cO_D(K_X-A))=0$.
The Cartier divisor $D$ is Gorenstein, with dualizing sheaf $\omega_D=\cO_D(K_X+D)$
by the effective-Cartier adjunction calculation in \resultcite{Tags~0B4B and~0AA4}{Stacks}.
Serre duality on $D$, in the form \resultcite{Tag~0BS2(5)}{Stacks}, yields
\[
 H^0(D,\cO_D(A+D))\simeq H^1(D,\cO_D(K_X-A))^{\vee}=0.
\]
The cohomology sequence of
$0\to\cO_X(A)\to\cO_X(A+D)\to\cO_D(A+D)\to0$ proves all three assertions.
\end{proof}

\subsection{Composition of forward morphisms}\label{subsec:composition}

Let $\cE=(\cL_1,\ldots,\cL_\gamma)$ be an exceptional collection of line bundles satisfying
\begin{equation}\label{eq:3-3}
 \begin{gathered}
 K_XC\ge0\quad\text{for every component $C$ of every effective divisor}\\
 \text{of a nonzero map $\cL_i\longrightarrow\cL_j$, $i<j$.}
 \end{gathered}
\end{equation}
Write $\cL_i=\cO_X(D_i)$ and
\[
 A=\Ext_X^*\!\left(\bigoplus_i\cL_i,\bigoplus_i\cL_i\right),
 \qquad R=A^0,
 \qquad e_jA^qe_i=\Ext^q(\cL_i,\cL_j).
\]
Lemma~\ref{lem:3-2} will identify the connected components of the degree-zero Hom algebra with
linearly ordered chains. Proposition~\ref{prop:3-4} then makes the degree-one composition maps along
these chains injective. To pass from this injectivity to projectivity over $R$, we use the following
decomposition of a chain representation. A complement $W_j$ records the vectors appearing at vertex $j$;
their images at later vertices form copies of the projective representation supported on $j,\ldots,m$.
We record the construction in the following lemmas. The projective modules are those of
\resultcite{\S1, ``Idempotents''~(3)--(5), p.~4}{CB92}; the chain criterion is the field case of
\resultcite{Example~4.2(i) and Theorem~5.4}{LZ11}.

\begin{lemma}[\resultcite{\S1, p.~4}{CB92}]\label{lem:3-5}
Let $Q_m=(1\to2\to\cdots\to m)$, $B=\mathbb C Q_m$, and let $\varepsilon_j$ be the stationary path at $j$.
With paths multiplied in composition order, the left $B$-module $P_j=B\varepsilon_j$ is projective and
\[
 (P_j)_r=
 \begin{cases}
  \mathbb C p_{j,r},&j\le r,\\
  0,&r<j,
 \end{cases}
 \qquad (P_j)_{r\to r+1}(p_{j,r})=p_{j,r+1},
\]
where $p_{j,r}$ is the unique path $j\to r$.
Moreover, $\Hom_B(P_j,V)\simeq V_j$ naturally in $V$.
\end{lemma}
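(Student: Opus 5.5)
The plan is to unwind the definitions of the path algebra $B=\mathbb C Q_m$ with its basis of paths, using the stated convention that paths multiply in composition order. For paths $p\colon a\to b$ and $p'\colon b'\to c$, the product $p'p$ is the concatenated path $a\to c$ when $b=b'$ and zero otherwise. Since $Q_m$ is linearly oriented, there is exactly one path $p_{a,b}$ from $a$ to $b$ for each $a\le b$, and none otherwise; the stationary paths are $\varepsilon_a=p_{a,a}$. Thus $\{p_{a,b}\}_{a\le b}$ is a $\mathbb C$-basis of $B$, and $1=\sum_a\varepsilon_a$ is a decomposition into orthogonal idempotents.

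For projectivity, the decomposition $B=\bigoplus_j B\varepsilon_j$ as left modules, coming from $1=\sum_j\varepsilon_j$, exhibits each $P_j=B\varepsilon_j$ as a direct summand of the free module $B$.

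For the explicit description, $B\varepsilon_j$ is spanned by the paths $p$ with $p\varepsilon_j=p$, namely the paths starting at $j$. These are the $p_{j,r}$ with $r\ge j$. The vertex-$r$ component of a left module $V$ is $\varepsilon_rV$, and $\varepsilon_r p_{j,r'}$ equals $p_{j,r}$ when $r'=r$ and vanishes otherwise. Hence $(P_j)_r=\mathbb C p_{j,r}$ for $j\le r$ and $(P_j)_r=0$ for $r<j$. The structure map along the arrow $\alpha_r=p_{r,r+1}$ is left multiplication, and $\alpha_r p_{j,r}=p_{j,r+1}$ by concatenation.

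For the Yoneda-type isomorphism, I would use the standard map $\Hom_B(B\varepsilon_j,V)\to\varepsilon_jV$, $f\mapsto f(\varepsilon_j)$. This lands in $\varepsilon_jV=V_j$ because $f(\varepsilon_j)=f(\varepsilon_j\varepsilon_j)=\varepsilon_jf(\varepsilon_j)$. The inverse sends $v\in\varepsilon_jV$ to $b\varepsilon_j\mapsto b\varepsilon_j v$; this is well defined, since $b\varepsilon_j=b'\varepsilon_j$ implies $b\varepsilon_jv=b'\varepsilon_jv$, and it is $B$-linear. Checking that the two maps are mutually inverse uses only $\varepsilon_jv=v$. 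Naturality in $V$ is immediate, since both maps commute with module homomorphisms $V\to V'$.

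I expect no real obstacle. The only care needed is with the multiplication-order convention, so that $B\varepsilon_j$, rather than $\varepsilon_jB$, consists of the paths starting at $j$. That is exactly what makes $P_j$ supported on the vertices $j,\ldots,m$, as in the statement.
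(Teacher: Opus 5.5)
Your proposal is correct and follows essentially the same route as the paper's proof. The path basis gives the description of $P_j$, the decomposition $1_B=\sum_j\varepsilon_j$ exhibits $P_j$ as a direct summand of $B$, and evaluation at $\varepsilon_j$, with inverse $v\mapsto[b\varepsilon_j\mapsto bv]$, gives $\Hom_B(P_j,V)\simeq V_j$; you only add more detail, namely the vertex components $\varepsilon_rP_j$ and the well-definedness and inverse checks.
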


\begin{proof}
We spell out the path-basis description and the idempotent argument of
\resultcite{\S1, ``Idempotents''~(3)--(5), p.~4}{CB92} for the chain $Q_m$.
The path basis gives the displayed representation, and
\[
 1_B=\sum_{j=1}^m\varepsilon_j,
 \qquad B=\bigoplus_{j=1}^m B\varepsilon_j=\bigoplus_{j=1}^m P_j.
\]
Thus each $P_j$ is a direct summand of a free module. The inverse maps
\[
 \begin{aligned}
 \Hom_B(P_j,V)&\longrightarrow V_j,&\varphi&\longmapsto\varphi(\varepsilon_j),\\
 V_j&\longrightarrow\Hom_B(P_j,V),&v&\longmapsto[b\varepsilon_j\longmapsto bv]
 \end{aligned}
\]
give the final assertion.
\end{proof}

\begin{lemma}\label{lem:3-6}
Let $V=(V_1\xrightarrow{f_1}\cdots\xrightarrow{f_{m-1}}V_m)$ be a finite-dimensional representation such that
\[
 \ker f_r=0\qquad\text{for every }r\in\{1,\ldots,m-1\}.
\]
Choose
\[
 W_1=V_1,
 \qquad V_j=f_{j-1}(V_{j-1})\oplus W_j\quad(2\le j\le m),
\]
and put
\[
 F_{rj}=
 \begin{cases}
  f_{r-1}\circ\cdots\circ f_j,&j<r,\\
  \id_{V_j},&j=r.
 \end{cases}
\]
Then
\[
 V_r=\bigoplus_{j\le r}F_{rj}(W_j),
 \qquad\dim W_1=\dim V_1,
 \qquad\dim W_j=\dim V_j-\dim V_{j-1}\quad(j\ge2).
\]
\end{lemma}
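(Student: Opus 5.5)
We argue by induction on $r$, proving that $V_r=\bigoplus_{j\le r}F_{rj}(W_j)$ is a direct sum and that each $F_{rj}$ restricted to $W_j$ is injective. Injectivity is automatic: $F_{rj}$ is a composite of the injective maps $f_j,\ldots,f_{r-1}$, or the identity when $j=r$. Hence $\dim F_{rj}(W_j)=\dim W_j$ for all $j\le r$.

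For $r=1$ there is nothing to prove, since $V_1=W_1=F_{11}(W_1)$. Assume the decomposition holds for $V_{r-1}$. We apply the injective map $f_{r-1}$ to it. An injective linear map carries an internal direct sum onto an internal direct sum of the images. Since $f_{r-1}\circ F_{r-1,j}=F_{rj}$ for $j\le r-1$, we obtain
\[
 f_{r-1}(V_{r-1})=\bigoplus_{j\le r-1}F_{rj}(W_j).
\]
By the choice of $W_r$, we have $V_r=f_{r-1}(V_{r-1})\oplus W_r$, and $W_r=F_{rr}(W_r)$. Combining the two decompositions gives $V_r=\bigoplus_{j\le r}F_{rj}(W_j)$, which completes the induction.

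For the dimension formulas, $\dim W_1=\dim V_1$ holds by definition. For $j\ge2$, the splitting $V_j=f_{j-1}(V_{j-1})\oplus W_j$ together with the injectivity of $f_{j-1}$ gives $\dim W_j=\dim V_j-\dim V_{j-1}$.

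The argument contains no real obstacle. The only point that needs care is that images of direct summands remain independent after composing, and this follows from the injectivity of each $f_r$. That injectivity is exactly the hypothesis $\ker f_r=0$, which in the application comes from Proposition~\ref{prop:3-4}.
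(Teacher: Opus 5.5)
Your proof is correct and is essentially the paper's argument: both proceed by induction, push the decomposition of $V_{r-1}$ forward under the injective map $f_{r-1}$ using $f_{r-1}\circ F_{r-1,j}=F_{rj}$, and append $W_r$. The dimension formulas then follow from the splitting and injectivity, exactly as in the paper.
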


\begin{proof}
For every $1\le r<m$, the map $f_r$ is injective and preserves direct sums of subspaces. Hence
\[
 \begin{aligned}
 V_1&=W_1,\\
 V_{r+1}&=f_r(V_r)\oplus W_{r+1}\\
 &=\bigoplus_{j\le r}f_rF_{rj}(W_j)\oplus W_{r+1}
 =\bigoplus_{j\le r+1}F_{r+1,j}(W_j).
 \end{aligned}
\]
Induction proves the decomposition; taking dimensions gives the multiplicities.
\end{proof}

\begin{lemma}[cf.~\resultcite{Example~4.2(i)}{LZ11}]\label{lem:3-7}
In the notation of Lemmas~\ref{lem:3-5} and~\ref{lem:3-6}, there is an isomorphism of representations
\begin{equation}\label{eq:3-4}
 V\simeq\bigoplus_{j=1}^m W_j\otimes_{\mathbb C}P_j
 \simeq\bigoplus_{j=1}^m P_j^{\oplus\dim W_j}.
\end{equation}
In particular, $V$ is projective as a left $B$-module.
\end{lemma}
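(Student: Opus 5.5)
The plan is to build the isomorphism \eqref{eq:3-4} one summand at a time, using the universal property of $P_j$ from Lemma~\ref{lem:3-5}, and then read off bijectivity vertex by vertex from Lemma~\ref{lem:3-6}. For each $j$, let $\iota_j\colon W_j\hookrightarrow V_j$ be the chosen inclusion. By Lemma~\ref{lem:3-5}, $\Hom_B(P_j,V)\simeq V_j$. Tensoring this with $W_j^\vee$ gives
\[
 \Hom_B(W_j\otimes_{\mathbb C}P_j,V)\simeq\Hom_{\mathbb C}(W_j,V_j),
\]
so $\iota_j$ corresponds to a module map $\Phi_j\colon W_j\otimes P_j\to V$. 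Explicitly, $\Phi_j(w\otimes p_{j,r})=F_{rj}(w)$ for $w\in W_j$ and $r\ge j$. This is where the composition-order convention for paths is used: the path $p_{j,r}$ acts on $V_j$ through $f_{r-1}\circ\cdots\circ f_j=F_{rj}$. Summing over $j$ gives a $B$-module homomorphism
\[
 \Phi=\sum_j\Phi_j\colon\bigoplus_{j=1}^m W_j\otimes P_j\longrightarrow V.
\]

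Next I check that $\Phi$ is bijective one vertex at a time, since a morphism of representations is an isomorphism exactly when each component map is. At vertex $r$, Lemma~\ref{lem:3-5} gives $(P_j)_r=\mathbb C p_{j,r}$ for $j\le r$ and $0$ for $j>r$. So the source at vertex $r$ is $\bigoplus_{j\le r}W_j\otimes\mathbb C p_{j,r}$, and $\Phi_r$ sends it to $\sum_{j\le r}F_{rj}(W_j)$. Lemma~\ref{lem:3-6} says this sum is direct and equals $V_r$. Each $F_{rj}$ is injective, being a composite of injective maps, so $\Phi_r$ is injective on every summand, and by directness it is an isomorphism. The second isomorphism in \eqref{eq:3-4} follows by choosing a basis of $W_j$, which gives $W_j\otimes P_j\simeq P_j^{\oplus\dim W_j}$.

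Projectivity then follows because each $P_j$ is projective (Lemma~\ref{lem:3-5}) and a finite direct sum of projective modules is projective.

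The only step needing care is confirming that $\Phi_j$, defined through the adjunction, has the explicit formula above, i.e.\ that the action of $p_{j,r}$ on a vector at vertex $j$ really is $F_{rj}$. This comes down to matching the module structure of a representation with the arrow maps $f_r$. Everything else is bookkeeping.
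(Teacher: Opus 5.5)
Your proposal is correct and follows essentially the same route as the paper: the same map $\Phi$, given at vertex $r$ by $(w_j)_{j\le r}\mapsto\sum_{j\le r}F_{rj}(w_j)$, is shown to be bijective vertex by vertex using the direct-sum decomposition of Lemma~\ref{lem:3-6}. The only difference is how $B$-linearity is obtained: you get it from the natural isomorphism $\Hom_B(P_j,V)\simeq V_j$ of Lemma~\ref{lem:3-5}, whereas the paper checks directly that $\Phi_{r+1}\iota_r=f_r\Phi_r$, using $F_{r+1,j}=f_rF_{rj}$.
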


\begin{proof}
For $m\ge2$, the projectivity assertion specializes the chain criterion in
\resultcite{Example~4.2(i) and Theorem~5.4}{LZ11} to the coefficient field $\mathbb C$;
the case $m=1$ is immediate.
The complements of Lemma~\ref{lem:3-6} give the decomposition as follows.
Set $U=\bigoplus_j W_j\otimes_{\mathbb C}P_j$, with $B$ acting on the second factor.
By Lemma~\ref{lem:3-5},
\[
 U_r=\bigoplus_{j\le r}W_j,
 \qquad\iota_r\colon U_r\longrightarrow U_{r+1},
 \qquad(w_1,\ldots,w_r)\longmapsto(w_1,\ldots,w_r,0).
\]
Lemma~\ref{lem:3-6} gives isomorphisms
\[
 \Phi_r\colon U_r\xrightarrow{\sim}V_r,
 \qquad\Phi_r((w_j)_{j\le r})=\sum_{j\le r}F_{rj}(w_j).
\]
They commute with the arrows:
\[
 \Phi_{r+1}\iota_r((w_j))
 =\sum_{j\le r}F_{r+1,j}(w_j)
 =f_r\!\left(\sum_{j\le r}F_{rj}(w_j)\right)
 =f_r\Phi_r((w_j)).
\]
Thus $\Phi\colon U\xrightarrow{\sim}V$ is $B$-linear, and Lemma~\ref{lem:3-5} proves projectivity.
\end{proof}

We now apply this description to the composition maps in the Ext algebra.
The nesting of effective divisors determines $R$, while multiplication on $H^1$ supplies the
injective arrows needed in Lemma~\ref{lem:3-7}.

\begin{proposition}\label{prop:3-8}
The algebra $R$ is a product of path algebras of linearly oriented chains.
The bimodule $A^1$ is projective both as a left and as a right $R$-module.
\end{proposition}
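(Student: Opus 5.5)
First I would determine the structure of $R=\bigoplus_{i\le j}\Hom(\cL_i,\cL_j)$. By Lemma~\ref{lem:3-2}, combined with condition \eqref{eq:3-3}, each space $\Hom(\cL_i,\cL_j)$ with $i<j$ has dimension at most one. When it is nonzero, it is spanned by the section $s_{ij}$ whose zero divisor $D_{ij}$ is the unique effective divisor in the class $D_j-D_i$. Define a relation on indices by $i\preceq j$ if $i=j$, or $i<j$ and $\Hom(\cL_i,\cL_j)\ne0$.

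The key claim is that this relation is transitive and that its comparability classes are chains. Concretely, I would show two things: (a) if $i<j<k$ and both $\Hom(\cL_i,\cL_j)$ and $\Hom(\cL_j,\cL_k)$ are nonzero, then $s_{jk}s_{ij}\ne0$, which is automatic for sections of line bundles on an integral surface; and (b) if $i\prec j$ and $i\prec k$ with $j<k$, then $j\prec k$, with the symmetric statement for a common target. For (b), take $C_0=D_{ik}$ and $E_0=D_{ij}$. The difference $C_0-E_0\sim D_k-D_j$ is left-orthogonal because $(\cL_j,\cL_k)$ is exceptional, and the components of $E_0$ satisfy \eqref{eq:3-3}. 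The nesting part of Lemma~\ref{lem:3-2} then gives $E_0<C_0$, so $C_0-E_0$ is an effective nonzero divisor in the class $D_k-D_j$, and hence $\Hom(\cL_j,\cL_k)\ne0$. The common-target case is handled the same way, using $D_{ik}-D_{jk}\sim D_j-D_i$.

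From (a) and (b), the graph whose edges are the pairs with nonzero Hom splits into connected components. Each component is totally ordered by the index order and has every forward Hom one-dimensional, with composition nonzero. To see that each component is a chain, take two elements of a component and a zigzag joining them; applying (b) repeatedly shortens the zigzag to a single comparison. Then $R\simeq\prod_c \mathbb C Q_{m_c}$, where the $s_{r,r+1}$ along consecutive elements of the component serve as arrows and longer paths map to compositions. I would check this isomorphism by dimension count together with (a).

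Next I treat $A^1$ as a left $R$-module. It decomposes by source, $A^1=\bigoplus_i\bigoplus_j\Ext^1(\cL_i,\cL_j)$, and left multiplication only moves the target. So for fixed $i$ and a fixed chain component $c=(j_1<\cdots<j_m)$, the spaces $V_r=H^1(\cO_X(D_{j_r}-D_i))$ form a representation of $Q_m$ whose arrows are multiplication by $s_{j_r j_{r+1}}$. Proposition~\ref{prop:3-4} applies with $A=D_{j_r}-D_i$ and $D=D_{j_{r+1}j_r}$. The needed left-orthogonality of $A$ and $A+D$ holds when $i<j_r$, and also when $i>j_r$, because $\Ext^\ast(\cL_i,\cL_j)$ is then computed by a reverse difference. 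I need to verify carefully that left-orthogonality is the right hypothesis in the reversed case; this is the step I expect to be the main obstacle. It would help to note that if $i>j_{r+1}$ then $V$ vanishes there, and if $i=j_r$ then $A=0$ and $H^1(\cO_X)=0$, so injectivity is trivial. The truly needed case is $j_r<i<j_{r+1}$ or $i<j_r$, and there $-A$ and $-(A+D)$ are forward differences, so Serre duality may be required instead. Once injectivity of every arrow is established, Lemma~\ref{lem:3-7} gives projectivity of each summand, and hence of $A^1$ as a left $R$-module.

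For right projectivity I would argue dually. With the target fixed, the representation is $\Ext^1(\cL_{j_r},\cL_k)$ with arrows given by precomposition, which is contravariant. Multiplication by $s$ maps $H^1(\cO(D_k-D_{j_{r+1}}))\to H^1(\cO(D_k-D_{j_r}))$, and Proposition~\ref{prop:3-4} again gives injectivity. This yields a representation of the opposite chain, which is again a linearly oriented chain, so Lemma~\ref{lem:3-7} applies to give right projectivity.
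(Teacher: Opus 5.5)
Your treatment of $R$ follows the paper's argument. Lemma~\ref{lem:3-2} gives one-dimensionality. The edge relation is closed under composition and under the two nesting cases, so components are complete, and each component is identified with $\mathbb C Q_m$. One small point: shortening a zigzag also needs your step (a) when two consecutive edges point the same way, not only (b). Your strategy for $A^1$ is also the paper's. However, your injectivity step contains a false claim and leaves a case unproved.

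When $j_r<i$, the class $A=D_{j_r}-D_i$ is \emph{not} left-orthogonal in general. Left-orthogonality would require $\RHom(\cL_{j_r},\cL_i)=0$. That is a forward Hom complex, and it is nonzero, for example, whenever $j_r$ and $i$ lie in the same chain. So Proposition~\ref{prop:3-4} cannot be invoked there. As a result, the case $j_r<i<j_{r+1}$, which you call ``truly needed'' and propose to attack by Serre duality, is left open. Your case list also omits $i=j_{r+1}$.

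The missing observation is that all these cases are trivial. By semiorthogonality, $V_r=\Ext^1(\cL_i,\cL_{j_r})$ vanishes whenever $j_r<i$. When $j_r=i$ it equals $H^1(X,\cO_X)=0$. In both situations the arrow out of $V_r$ is injective because its source is zero. The only nontrivial case is $i<j_r$. There $A=D_{j_r}-D_i$ and $A+D\sim D_{j_{r+1}}-D_i$ are both forward differences, hence left-orthogonal, and Proposition~\ref{prop:3-4} applies directly. This is exactly the paper's dichotomy $t<a$ versus $a\le t$, and no Serre duality is needed.

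The same split is needed for the right action, where you apply Proposition~\ref{prop:3-4} without distinguishing cases. It applies only when $j_{r+1}<k$. For $k\le j_{r+1}$ the source $\Ext^1(\cL_{j_{r+1}},\cL_k)$ is zero, so injectivity is again automatic. With this correction, the rest of your argument goes through as in the paper: apply Lemma~\ref{lem:3-7} on each chain for the left action, and on the reversed chains for the right action.
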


\begin{proof}
For $i<j$, put $H_{ij}:=\Hom(\cL_i,\cL_j)$ and $\alpha_{ij}:=D_j-D_i$.
Semiorthogonality gives
\[
 H^q(X,\cO_X(-\alpha_{ij}))=\Ext^q(\cL_j,\cL_i)=0\qquad(q\ge0).
\]
Thus every $\alpha_{ij}$ is left-orthogonal. If $H_{ij}\ne0$, choose
\[
 0\ne s_{ij}\in H_{ij}=H^0(X,\cO_X(\alpha_{ij})),
 \qquad C_{ij}:=\operatorname{div}(s_{ij})\in|\alpha_{ij}|.
\]
Here $C_{ij}>0$: otherwise $\cL_i\simeq\cL_j$, contrary to $\Hom(\cL_j,\cL_i)=0$.
By \eqref{eq:3-3} and Lemma~\ref{lem:3-2},
\[
 \dim H_{ij}=h^0(X,\cO_X(C_{ij}))=1.
\]
On vertices $1,\ldots,\gamma$, draw $i\to j$ precisely when $i<j$ and $H_{ij}\ne0$;
let $\Gamma$ be the underlying undirected graph.

For $i<j<k$, any two of the three possible edges imply the third, as follows.

\smallskip
\noindent\textup{(i)} $i\to j$ and $j\to k$.
At the generic point $\eta$ of the integral surface $X$,
\[
 (s_{jk}\circ s_{ij})_\eta=(s_{jk})_\eta\circ(s_{ij})_\eta\ne0,
\]
since both maps are nonzero between one-dimensional $\mathbb C(X)$-vector spaces. Consequently,
\[
 0\ne s_{jk}\circ s_{ij}\in H_{ik}
 \quad\Longrightarrow\quad i\to k,
 \qquad C_{ik}=C_{ij}+C_{jk}.
\]
The divisor equality follows from the one-dimensionality of $H_{ik}$.

\smallskip
\noindent\textup{(ii)} $i\to j$ and $i\to k$.
Set $C_0=C_{ik}$ and $E_0=C_{ij}$. Then
\[
 C_0-E_0\sim D_k-D_j,
 \qquad H^q(X,\cO_X(E_0-C_0))=\Ext^q(\cL_k,\cL_j)=0\quad(q\ge0).
\]
The divisors $C_0,E_0$ are effective and left-orthogonal, and \eqref{eq:3-3} supplies
$K_XC\ge0$ on every component of $E_0$. Lemma~\ref{lem:3-2} gives
\[
 C_{ij}<C_{ik},
 \qquad\operatorname{div}(s_{ik}/s_{ij})=C_{ik}-C_{ij}>0,
\]
whence the quotient is regular and
\[
 0\ne s_{ik}/s_{ij}\in H^0(X,\cO_X(D_k-D_j))=H_{jk}
 \quad\Longrightarrow\quad j\to k.
\]

\smallskip
\noindent\textup{(iii)} $i\to k$ and $j\to k$.
This time set $C_0=C_{ik}$ and $E_0=C_{jk}$. We have
\[
 C_0-E_0\sim D_j-D_i,
 \qquad H^q(X,\cO_X(E_0-C_0))=\Ext^q(\cL_j,\cL_i)=0\quad(q\ge0).
\]
Again \eqref{eq:3-3} and Lemma~\ref{lem:3-2} give
\[
 C_{jk}<C_{ik},
 \qquad\operatorname{div}(s_{ik}/s_{jk})=C_{ik}-C_{jk}>0,
\]
so
\[
 0\ne s_{ik}/s_{jk}\in H^0(X,\cO_X(D_j-D_i))=H_{ij}
 \quad\Longrightarrow\quad i\to j.
\]
Thus, for pairwise distinct vertices,
\[
 \{u,v\},\{v,w\}\in E(\Gamma)\quad\Longrightarrow\quad\{u,w\}\in E(\Gamma).
\]
Let $d_\Gamma$ denote graph distance. If a shortest path $v_0,v_1,\ldots,v_\ell$ had $\ell\ge2$, then
\[
 \{v_0,v_2\}\in E(\Gamma)
 \quad\Longrightarrow\quad d_\Gamma(v_0,v_\ell)\le\ell-1<\ell=d_\Gamma(v_0,v_\ell),
\]
a contradiction. Hence every connected component of $\Gamma$ is complete.

Choose a point outside the finitely many effective Hom divisors and trivialize all $\cL_i$ there.
Normalize each Hom generator to have value one. Products of normalized generators remain normalized.
A component with $m$ ordered vertices consequently gives the path algebra of
\[
 Q_m=(1\longrightarrow2\longrightarrow\cdots\longrightarrow m).
\]
There are no degree-zero maps between different components.
For a component $\Lambda=\{a_1<\cdots<a_m\}$, put
\[
 c_\Lambda=\sum_{r=1}^m e_{a_r},
 \qquad R_\Lambda=c_\Lambda R\simeq\mathbb C Q_m,
 \qquad R=\prod_\Lambda R_\Lambda.
\]
For each source $t$, the left $R_\Lambda$-module $c_\Lambda A^1e_t$ is
\begin{equation}\label{eq:3-5}
 V_r^{\Lambda,t}=\Ext^1(\cL_t,\cL_{a_r}),
 \qquad f_r^{\Lambda,t}(\xi)=s_{a_r,a_{r+1}}\circ\xi.
\end{equation}
For $a<b$ in $\Lambda$, Proposition~\ref{prop:3-4} and semiorthogonality give
\[
 \begin{array}{ll}
 t<a:\quad&\Ext^1(\cL_t,\cL_a)\hookrightarrow\Ext^1(\cL_t,\cL_b),
 \quad A=D_a-D_t,\ D=D_b-D_a,\\[2pt]
 a\le t:\quad&\Ext^1(\cL_t,\cL_a)=0.
 \end{array}
\]
Thus every $f_r^{\Lambda,t}$ is injective. Choose the complements $W_j^{\Lambda,t}$
of Lemma~\ref{lem:3-6}. Lemmas~\ref{lem:3-5} and~\ref{lem:3-7} yield
\[
 c_\Lambda A^1e_t\simeq\bigoplus_{j=1}^m W_j^{\Lambda,t}\otimes_{\mathbb C}R_\Lambda e_{a_j},
 \qquad R_\Lambda e_{a_j}=Re_{a_j},
\]
and therefore
\[
 A^1=\bigoplus_{\Lambda,t}c_\Lambda A^1e_t
 \simeq\bigoplus_{\Lambda,t,j}(Re_{a_j})^{\oplus\dim W_j^{\Lambda,t}}.
\]
This proves left projectivity.

For a fixed target $t$, right multiplication acts by precomposition:
\[
 \Ext^1(\cL_b,\cL_t)\longrightarrow\Ext^1(\cL_a,\cL_t),
 \qquad\xi\longmapsto\xi\circ s_{ab}.
\]
Again,
\[
 \begin{array}{ll}
 b<t:\quad&\text{the map is injective by Proposition~\ref{prop:3-4},}
 \quad A=D_t-D_b,\ D=D_b-D_a,\\[2pt]
 t\le b:\quad&\Ext^1(\cL_b,\cL_t)=0.
 \end{array}
\]
Each component of $R^{\mathrm{op}}$ is a chain with the reversed vertex order.
Applying Lemma~\ref{lem:3-7} to these chains proves right projectivity.
\end{proof}

The height estimate uses the left projectivity in Proposition~\ref{prop:3-8}.

\subsection{The first bar differential}\label{subsec:bar-height}

For the exceptional collection under consideration, define
\[
 A=\Ext_X^*\!\left(\bigoplus_i\cL_i,\bigoplus_i\cL_i\right)
   =A^0\oplus A^1\oplus A^2,
 \qquad e_jA^qe_i=\Ext_X^q(\cL_i,\cL_j).
\]
The product is Yoneda composition, and $e_i=\id_{\cL_i}$ denotes the
corresponding summand projection. Thus
\[
 e_ie_j=\delta_{ij}e_i,\qquad \sum_i e_i=1_A,\qquad
 e_iAe_i=\mathbb C e_i,\qquad e_jAe_i=0\quad(j<i).
\]
A \emph{finite graded directed algebra} is a finite-dimensional unital
graded $\mathbb C$-algebra with such ordered idempotents. Its diagonal
algebra and off-diagonal ideal are
\begin{equation}\label{eq:3-6}
 S_0=\bigoplus_i\mathbb C e_i\simeq\mathbb C^\gamma,
 \qquad J=\bigoplus_{i<j}e_jAe_i,\qquad A=S_0\oplus J.
\end{equation}
Write
\begin{equation}\label{eq:3-7}
 R=A^0,\qquad J_0=J\cap R,\qquad R=S_0\oplus J_0,
 \qquad J=J_0\oplus A^1\oplus A^2.
\end{equation}
In particular, $J$ includes the forward degree-zero morphisms. We use
the cohomology algebra $A=H^*(\mathcal A)$.

For a graded $A$-bimodule $T$, put
\begin{equation}\label{eq:3-8}
\begin{gathered}
 C_p(A,T)=\tr_{S_0}\!\left(T\otimes_{S_0}J^{\otimes_{S_0}p}\right),
 \qquad C(A,T)=\bigoplus_{p\ge0}C_p(A,T)[p],\\
 \tr_{S_0}(M)=\bigoplus_i e_iMe_i,\qquad J^{\otimes_{S_0}0}=S_0.
\end{gathered}
\end{equation}
Write $J_d:=J\cap A^d$; thus $J_1=A^1$, $J_2=A^2$, and $J_d=0$
for $d\notin\{0,1,2\}$. For a homogeneous tensor
\[
 z=x\otimes a_1\otimes\cdots\otimes a_p,
 \qquad x\in T^t,\quad a_\nu\in J_{d_\nu},
\]
its internal degree and its degree in $C(A,T)$ are
\begin{equation}\label{eq:3-9}
 D(z)=t+\sum_{\nu=1}^p d_\nu,\qquad
 \deg_C(z)=D(z)-p.
\end{equation}
In the bar formulas of this subsection, unlabeled tensor products are
over $S_0$. We use unshifted tensor factors and apply $[p]$ to the whole
length-$p$ summand. The cyclic interchange therefore uses the internal
degrees $t,d_\nu$. The shift and Koszul conventions
are those of \resultcite{\S2.1}{Kel06}; the cyclic bar differential
recalled in \resultcite{\S5.3}{Kel06} is written below with a
bimodule coefficient and this factor order.

For $p\ge1$, define the face maps
$b_\nu^{(p)}:C_p(A,T)\to C_{p-1}(A,T)$ by
\begin{equation}\label{eq:3-10}
\begin{aligned}
 b_0^{(p)}(z)&=(xa_1)\otimes a_2\otimes\cdots\otimes a_p,\\
 b_\nu^{(p)}(z)&=x\otimes a_1\otimes\cdots\otimes
       (a_\nu a_{\nu+1})\otimes\cdots\otimes a_p
       &&(1\le\nu<p),\\
 b_p^{(p)}(z)&=(-1)^{\kappa(z)}(a_px)\otimes a_1\otimes\cdots\otimes a_{p-1},\\
 \kappa(z)&=d_p\left(t+\sum_{\nu=1}^{p-1}d_\nu\right).
\end{aligned}
\end{equation}
Here $xa_1$ is the right $A$-action on $T$, whereas $a_px$ is the left
action. The sign $(-1)^{\kappa(z)}$ moves $a_p$ past
$x,a_1,\ldots,a_{p-1}$. The superscript $(p)$ records the input bar
length. The differential is
\begin{equation}\label{eq:3-11}
 b\big|_{C_p(A,T)[p]}=\sum_{\nu=0}^p(-1)^\nu b_\nu^{(p)}
 \quad(p\ge1),\qquad b\big|_{C_0(A,T)}=0.
\end{equation}
Thus $(-1)^\nu$ is the alternating bar sign, and the last face also
contains the Koszul sign in \eqref{eq:3-10}. Both $A$ and $T$ have
zero internal differential. For example,
\[
\begin{aligned}
 b(x\otimes a)&=xa-(-1)^{|a||x|}ax,\\
 b(x\otimes a_1\otimes a_2)
  &=xa_1\otimes a_2-x\otimes a_1a_2
       +(-1)^{d_2(t+d_1)}a_2x\otimes a_1.
\end{aligned}
\]
Associativity and the bimodule identities give the face relations
\[
 b_i^{(p-1)}b_j^{(p)}=b_{j-1}^{(p-1)}b_i^{(p)}
 \qquad(0\le i<j\le p,\ p\ge2).
\]
The two occurrences have opposite alternating signs in $b^2$, so
$b^2=0$. Moreover,
\[
 b\bigl(C_p(A,T)^D\bigr)\subseteq C_{p-1}(A,T)^D,
 \qquad D-(p-1)=(D-p)+1,
\]
where the superscript $D$ denotes internal degree. Hence $b$ has
cohomological degree $+1$.

The endpoint order agrees with the bimodule convention. On a summand
indexed by $i_0<\cdots<i_p$,
\[
 x\in e_{i_0}T^t e_{i_p},\qquad
 a_\nu\in e_{i_{p-\nu+1}}J_{d_\nu}e_{i_{p-\nu}}
 \quad(1\le\nu\le p),
\]
and consequently
\[
 xa_1\in e_{i_0}T^{t+d_1}e_{i_{p-1}},\qquad
 a_px\in e_{i_1}T^{t+d_p}e_{i_p}.
\]
For the normal bimodule $T=H^*(\mathcal T)$ these actions are
\[
 xa_1=x\circ a_1,\qquad a_px=S_X^{-1}(a_p)\circ x;
\]
see \eqref{eq:2-4} and \resultcite{Proposition~3.7}{Kuz15}.
Products $a_\nu a_{\nu+1}$ in \eqref{eq:3-10} are likewise in
composition order, $a_\nu\circ a_{\nu+1}$.

We use $R^e=R^{\mathrm{op}}\otimes_{\mathbb C}R$ and
\[
 K\in\mathrm D^{\ge d}\ \Longleftrightarrow\ H^u(K)=0\ (u<d),
 \qquad H^u(K[s])=H^{u+s}(K),\qquad d_{K[s]}=(-1)^s d_K.
\]
For a condition $P$, the indicator $\mathbf1_P$ equals $1$ when $P$
holds and $0$ otherwise. The following lemmas separate the filtration,
the resolutions, and the degree estimate.

\begin{lemma}\label{lem:3-9}
For $\mathbf d=(d_1,\ldots,d_p)\in\{0,1,2\}^p$, put
\[
 \rho(\mathbf d)=\sum_{\nu=1}^p\mathbf1_{d_\nu>0},\qquad
 \rho(x\otimes a_1\otimes\cdots\otimes a_p)=\rho(\mathbf d)
 \quad(a_\nu\in J_{d_\nu}).
\]
For $C=C(A,T)$ and $r\ge0$, define
\begin{equation}\label{eq:3-12}
 F_rC=\bigoplus_{p\ge0}\ 
       \bigoplus_{\substack{\mathbf d\in\{0,1,2\}^p\\\rho(\mathbf d)\le r}}
       \tr_{S_0}\!\left(T\otimes J_{d_1}\otimes\cdots\otimes J_{d_p}\right)[p],
 \qquad F_{-1}C=0.
\end{equation}
These are subcomplexes, and, if $\gamma$ is the number of vertex idempotents,
\[
 0=F_{-1}C\subseteq F_0C\subseteq\cdots\subseteq F_{\gamma-1}C=C.
\]
The associated-graded differential consists exactly of the nonzero
faces involving at least one degree-zero algebra factor. In particular,
\begin{equation}\label{eq:3-13}
 \operatorname{gr}_r^F C:=F_rC/F_{r-1}C
   =\bigoplus_{t\in\mathbb Z}\ \bigoplus_{\mathbf q\in\{1,2\}^r}G_{t,\mathbf q},
\end{equation}
where $G_{t,\mathbf q}$ fixes the internal degree $t$ of $x$ and the
ordered list $\mathbf q=(q_1,\ldots,q_r)$ of positive degrees of the
algebra factors.
\end{lemma}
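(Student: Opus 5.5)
The plan is to check each face map of \eqref{eq:3-10} separately for its effect on the statistic $\rho$, and to read off everything else from that. Fix a homogeneous tensor $z=x\otimes a_1\otimes\cdots\otimes a_p$ with $a_\nu\in J_{d_\nu}$ and $\rho(z)=\rho(\mathbf d)$. Because $J=J_0\oplus A^1\oplus A^2$ is graded and $S_0$-bilinear, $C$ decomposes as a direct sum over $p$ and over the degree list $\mathbf d$. So $F_rC$ is well defined as a direct sum of summands.

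The key step is to record what each face does to $\mathbf d$.
\begin{itemize}
\item The endpoint faces $b_0$ and $b_p$ absorb $a_1$ or $a_p$ into $x$. They delete one entry of $\mathbf d$, so $\rho$ drops by one if that entry is positive and is unchanged if it is zero.
\item An inner face $b_\nu$ replaces $(d_\nu,d_{\nu+1})$ by the single entry $d_\nu+d_{\nu+1}$, since $J$ is graded. The product lies in $J$ because both indices move forward. If both entries are positive, $\rho$ drops by one. Otherwise $\rho$ is unchanged. If the sum exceeds $2$, the face is zero.
\end{itemize}
In every case $\rho(b_\nu z)\le\rho(z)$, so each $F_rC$ is a subcomplex.

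Finiteness comes from directedness. A nonzero summand of $C_p$ is indexed by $i_0<\cdots<i_p$, so $p\le\gamma-1$ and therefore $\rho\le p\le\gamma-1$. This gives $F_{\gamma-1}C=C$, and $F_{-1}C=0$ holds by definition.

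For the associated graded, a face survives modulo $F_{r-1}$ exactly when it preserves $\rho$. By the case list, these are precisely the faces that consume at least one degree-zero factor:
\begin{itemize}
\item an endpoint face acting with some $a\in J_0$;
\item an inner face multiplying two factors, at least one of which lies in $J_0$.
\end{itemize}
Faces of this kind preserve two things. They keep the internal degree $t$ of the coefficient factor, because the absorbed element has degree $0$. They keep the ordered list of positive degrees: a degree-zero factor multiplied into a positive one leaves that degree unchanged, and the relative order of the positive factors is never altered. The one exception is the cyclic face $b_p$, which moves $a_p$ to the front. When $a_p\in J_0$ it is absorbed into $x$, leaving the positive list untouched, so the exception is harmless.

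Hence $\operatorname{gr}^F_rC$ splits as a direct sum of subcomplexes $G_{t,\mathbf q}$, where $t$ is the degree of $x$ and $\mathbf q\in\{1,2\}^r$ is the ordered sequence of positive $d_\nu$. This is \eqref{eq:3-13}.

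The main point needing care is the endpoint face $b_p$ together with the bookkeeping of "ordered". One must confirm that, on $\operatorname{gr}$, the cyclic rotation only ever absorbs a degree-zero factor, so the sign $(-1)^{\kappa}$ and the ordering remain compatible. One must also confirm that $\rho$-preserving faces cannot merge two positive factors. Both follow directly from the case list above.
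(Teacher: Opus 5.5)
Your proposal is correct and follows essentially the same route as the paper: you track how each face changes $\rho$ (endpoint faces lower it by $\mathbf 1_{d>0}$, inner faces by $\mathbf 1_{d_\nu>0,\,d_{\nu+1}>0}$, with products of total degree $>2$ vanishing), conclude that the faces surviving on $\operatorname{gr}_r^F C$ are exactly those involving a degree-zero factor and that they preserve $(t,\mathbf q)$, and get finiteness from directedness via $\rho\le p\le\gamma-1$. The only difference is that the paper also records $\kappa(z)=0$ for the surviving cyclic face. That fact is not needed for the splitting, but it is used later in Lemma~\ref{lem:3-10}.
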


\begin{proof}
Fix $p\ge1$ and a homogeneous tensor
$z=x\otimes a_1\otimes\cdots\otimes a_p$, with $|x|=t$ and
$|a_\nu|=d_\nu$. The endpoint maps $b_0^{(p)}$ and $b_p^{(p)}$ are
the right and cyclic left actions of \eqref{eq:3-10}; the maps
$b_\nu^{(p)}$, $1\le\nu<p$, are its adjacent products. Their nonzero
outputs have multidegrees
\[
\begin{aligned}
 b_0^{(p)}(z)&:\ (t+d_1;d_2,\ldots,d_p),\\
 b_p^{(p)}(z)&:\ (t+d_p;d_1,\ldots,d_{p-1}),\\
 b_\nu^{(p)}(z)&:\ (t;d_1,\ldots,d_{\nu-1},d_\nu+d_{\nu+1},
                           d_{\nu+2},\ldots,d_p).
\end{aligned}
\]
Here the entry before the semicolon is the coefficient degree; the
remaining entries are the algebra-factor degrees. Therefore, for every
nonzero face,
\begin{equation}\label{eq:3-14}
\begin{aligned}
 \rho(b_0^{(p)}z)&=\rho(z)-\mathbf1_{d_1>0},\\
 \rho(b_p^{(p)}z)&=\rho(z)-\mathbf1_{d_p>0},\\
 \rho(b_\nu^{(p)}z)&=\rho(z)-\mathbf1_{d_\nu>0}
                     -\mathbf1_{d_{\nu+1}>0}
                     +\mathbf1_{d_\nu+d_{\nu+1}>0}\\
  &=\rho(z)-\mathbf1_{d_\nu>0,\ d_{\nu+1}>0}
       \qquad(1\le\nu<p).
\end{aligned}
\end{equation}
If $d_\nu+d_{\nu+1}>2$, the corresponding product is zero because
$A^{>2}=0$, and it contributes nothing. Thus every face weakly
decreases $\rho$, and \eqref{eq:3-11} yields $b(F_rC)\subseteq F_rC$.

For $\rho(z)=r$, write $[z]_r=z+F_{r-1}C$ and denote the induced
differential on $\operatorname{gr}_r^F C$ by $\bar b$.
Formula~\eqref{eq:3-14} gives
\begin{equation}\label{eq:3-15}
\begin{aligned}
 \bar b[z]_r={}&\mathbf1_{d_1=0}[b_0^{(p)}z]_r\\
 &+\sum_{\nu=1}^{p-1}(-1)^\nu
       \mathbf1_{\{d_\nu=0\ \mathrm{or}\ d_{\nu+1}=0\}}
       [b_\nu^{(p)}z]_r\\
 &+(-1)^p\mathbf1_{d_p=0}[b_p^{(p)}z]_r.
\end{aligned}
\end{equation}
Empty sums are zero; in bar length zero, $\bar b=0$. The last face
survives only when $d_p=0$, in which case $\kappa(z)=0$. A surviving
endpoint leaves $t$ unchanged. A surviving adjacent product has degrees
\[
 (0,0)\longmapsto0,\qquad (0,q)\longmapsto q,\qquad
 (q,0)\longmapsto q\qquad(q\in\{1,2\}).
\]
In particular, if
\[
 \{\nu:d_\nu>0\}=\{\nu_1<\cdots<\nu_r\},\qquad
 \mathbf q(z)=(d_{\nu_1},\ldots,d_{\nu_r}),
\]
then every surviving face preserves $(t,\mathbf q(z))$. The list is
empty for $r=0$. This proves the decomposition into the subcomplexes
$G_{t,\mathbf q}$ in \eqref{eq:3-13}.

Finally, for $p\ge1$, the idempotent decomposition is
\[
 J^{\otimes_{S_0}p}\simeq
 \bigoplus_{i_0<\cdots<i_p}
 e_{i_p}Je_{i_{p-1}}\otimes_{\mathbb C}\cdots
           \otimes_{\mathbb C}e_{i_1}Je_{i_0}.
\]
There is no such index sequence for $p\ge \gamma$. Hence
\[
 J^{\otimes_{S_0}p}=0\quad(p\ge \gamma),\qquad
 \rho(z)\le p\le \gamma-1,
\]
which proves finiteness.
\end{proof}

Fixing $(t,\mathbf q)$ leaves only the degree-zero factors free to
vary. They occur in the gaps between the fixed positive-degree
factors. The first $r$ gaps form the bar resolutions computing
successive derived tensor products over $R$; the remaining gap closes
the word and computes the derived trace over $R^e$. The next lemma
identifies these complexes, including the shift restoring the original
internal degrees and the sign change comparing the two total
differentials.

\begin{lemma}\label{lem:3-10}
Fix $t\in\mathbb Z$ and $\mathbf q=(q_1,\ldots,q_r)\in\{1,2\}^r$,
and put $D=t+\sum_iq_i$. Regard $T^t$ and $A^{q_i}$ as ordinary
bimodules concentrated in degree zero. Define
\begin{equation}\label{eq:3-16}
 K_{t,\mathbf q}
  =\left(T^t\otimes_R^{\mathbf L}A^{q_1}
      \otimes_R^{\mathbf L}\cdots\otimes_R^{\mathbf L}A^{q_r}\right)
      \otimes_{R^e}^{\mathbf L}R.
\end{equation}
Then
\begin{equation}\label{eq:3-17}
 G_{t,\mathbf q}\simeq K_{t,\mathbf q}[r-D].
\end{equation}
For $r=0$, this reads
$G_{t,\varnothing}\simeq(T^t\otimes_{R^e}^{\mathbf L}R)[-t]$.
\end{lemma}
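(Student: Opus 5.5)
We identify $G_{t,\mathbf q}$ with an explicit bar model for $K_{t,\mathbf q}$. The key tool is the normalized relative bar resolution of $R$ over the semisimple algebra $S_0$. Since $R=S_0\oplus J_0$ with $S_0$ semisimple, the complex
\[
 \mathrm{Bar}(R)=\bigoplus_{n\ge0}R\otimes_{S_0}J_0^{\otimes_{S_0}n}\otimes_{S_0}R[n]
\]
is a projective $R$-bimodule resolution of $R$. Consequently, for right/left modules $M,N$, the complex $M\otimes_{S_0}J_0^{\otimes n}\otimes_{S_0}N[n]$ computes $M\otimes_R^{\mathbf L}N$. Likewise, for a bimodule $M$, the complex $\tr_{S_0}(M\otimes_{S_0}J_0^{\otimes n})[n]$ computes $M\otimes^{\mathbf L}_{R^e}R$.

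\textbf{Step 1: an explicit model for $K_{t,\mathbf q}$.} Resolve each derived tensor product by bar resolutions, or equivalently replace each $A^{q_i}$ by $\mathrm{Bar}(R)\otimes_R A^{q_i}$. The iterated derived tensor product followed by the derived trace is then computed by the direct sum, over $n_0,\ldots,n_r\ge0$, of
\[
 \tr_{S_0}\bigl(T^t\otimes J_0^{\otimes n_0}\otimes A^{q_1}\otimes J_0^{\otimes n_1}\otimes\cdots\otimes A^{q_r}\otimes J_0^{\otimes n_r}\bigr)[n_0+\cdots+n_r].
\]
Its differential consists of multiplying adjacent $J_0$ factors, multiplying a $J_0$ factor into a neighbouring $T^t$ or $A^{q_i}$, and the cyclic action closing the trace. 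Here $A^{q_i}=J_{q_i}$, since positive-degree elements are automatically off-diagonal. To be careful about the order of the factors in \eqref{eq:3-10}, I would match this word with the tensor factor order of $C_p(A,T)$ literally.

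\textbf{Step 2: comparison with $G_{t,\mathbf q}$.} A basis tensor in $G_{t,\mathbf q}$ has bar length $p=r+\sum n_j$ and carries the shift $[p]$. By \eqref{eq:3-15}, the surviving faces are precisely the bar-type operations of Step 1:
\begin{itemize}
 \item products of two degree-zero factors;
 \item a degree-zero factor absorbed into $x$ or into an $a_{\nu}$ of positive degree;
 \item the cyclic face when $d_p=0$.
\end{itemize}
Products of two positive factors have been killed in the graded piece. So the underlying graded spaces agree after a shift. The model of Step 1 places the word in degree $-\sum n_j$, while in $C$ it sits in degree $D-p=D-r-\sum n_j$. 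Hence $G_{t,\mathbf q}\cong K_{t,\mathbf q}[r-D]$ degreewise.

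\textbf{Step 3: signs.} This step is the main obstacle. The face signs $(-1)^\nu$ in \eqref{eq:3-11} count all factors, including the positive-degree ones. The bar model of Step 1, by contrast, counts only $J_0$ factors within each gap. The cyclic face also carries no Koszul sign, because $d_p=0$.

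I would construct an explicit sign-twisting isomorphism. Multiply a basis word by $(-1)^{\epsilon}$, where $\epsilon$ is a suitable function of the positions of the positive-degree factors, for instance $\epsilon=\sum_i\nu_i$ or a triangular-number variant. The twist is chosen so that each surviving face in a gap acquires the sign it has in the iterated bar model.

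The global shift $[r-D]$ multiplies the differential by $(-1)^{r-D}$. This accounts for the ``sign change comparing the two total differentials'' mentioned before the lemma, and any residual global sign is harmless, since $d$ and $-d$ yield isomorphic complexes.

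Finally, the case $r=0$ is just the Hochschild bar complex of $R$ with coefficients in $T^t$ shifted by $-t$, which gives $G_{t,\varnothing}\simeq(T^t\otimes_{R^e}^{\mathbf L}R)[-t]$.
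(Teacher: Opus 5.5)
Your proposal follows the paper's proof essentially step for step: the normalized $S_0$-relative bar resolution, the splitting of $G_{t,\mathbf q}$ into gap pieces $U_{\mathbf h}$ counting degree-zero factors between the positive ones, the degree count giving the shift $[r-D]$, and a sign twist comparing $\bar b|_{U_{\mathbf h}}=\sum_k(-1)^{k+H_k(\mathbf h)}\partial_k$ with the iterated-bar differential $\delta=\sum_k(-1)^{H_k(\mathbf h)}\partial_k$. Your first candidate $\epsilon=\sum_i\nu_i$ does work: shrinking the $k$th gap lowers it by $r-k\equiv r+k\pmod{2}$, so it intertwines $\bar b$ with $(-1)^r\delta$, whereas the paper's twist $(-1)^{\sum_k(k+\sigma)h_k}$ with $\sigma=r-D$ lands exactly on the shifted differential $(-1)^\sigma\delta$, and the two differ only by the harmless global sign you mention.
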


\begin{proof}
We first recall the relative bar resolution discussed after
\resultcite{Definition~2.3}{CLMS19}. We then identify its iterated
totalization with $G_{t,\mathbf q}$, keeping track of the internal
degrees and the signs in \eqref{eq:3-11}.

Let $\epsilon:R\to S_0$ be the projection for $R=S_0\oplus J_0$,
and write $\bar u=u-\epsilon(u)\in J_0$. Since $J_0$ is a two-sided
ideal, $\epsilon$ is an algebra homomorphism and $u\mapsto\bar u$
is an $S_0$-bimodule map. All tensor products without a subscript in
this proof are over $S_0$, unless $\mathbb C$ is explicitly indicated.

For $h\ge0$, define the $R$-bimodule
\[
 B_h=R\otimes J_0^{\otimes h}\otimes R,\qquad
 J_0^{\otimes0}=S_0,
\]
and use the notation
\[
 u[c_1|\cdots|c_h]v:=u\otimes c_1\otimes\cdots\otimes c_h\otimes v,
 \qquad u,v\in R,\quad c_\ell\in J_0.
\]
In particular, $u[\,]v=u\otimes v\in B_0$. For $h\ge1$, define maps
$d_{h,\ell}:B_h\to B_{h-1}$, $0\le\ell\le h$, by
\begin{equation}\label{eq:3-18}
\begin{aligned}
 d_{h,0}\bigl(u[c_1|\cdots|c_h]v\bigr)
    &=uc_1[c_2|\cdots|c_h]v,\\
 d_{h,\ell}\bigl(u[c_1|\cdots|c_h]v\bigr)
    &=u[c_1|\cdots|c_\ell c_{\ell+1}|\cdots|c_h]v
        \quad(1\le\ell<h),\\
 d_{h,h}\bigl(u[c_1|\cdots|c_h]v\bigr)
    &=u[c_1|\cdots|c_{h-1}]c_hv.
\end{aligned}
\end{equation}
Set
\[
 d_h=\sum_{\ell=0}^h(-1)^\ell d_{h,\ell},\qquad
 \mu:B_0\to R,\qquad \mu(u[\,]v)=uv.
\]
Products of two elements of $J_0$ remain in $J_0$, so the formulas
are well-defined. Associativity gives $d_hd_{h+1}=0$ and $\mu d_1=0$.
Thus
\begin{equation}\label{eq:3-19}
 \cdots\xrightarrow{d_3}B_2\xrightarrow{d_2}B_1
       \xrightarrow{d_1}B_0\xrightarrow{\mu}R\longrightarrow0
\end{equation}
is an augmented complex of $R$-bimodules.

To explain the normalization, put
$\widetilde B_h=R\otimes R^{\otimes h}\otimes R$. Let
$D_h\subseteq\widetilde B_h$ be the sum of the tensor subspaces
having at least one of their $h$ middle factors in $S_0$; set $D_0=0$.
These are exactly the degenerate tensors, obtained by inserting
$1\in S_0$ and using $S_0$-balancing. The two faces removing such a
factor cancel, while all other faces retain an $S_0$-factor. Hence
$D_\bullet$ is a subcomplex of the unnormalized bar complex, and
\[
 \widetilde B_h/D_h\simeq
 R\otimes(R/S_0)^{\otimes h}\otimes R
 \xrightarrow[\ R/S_0\simeq J_0\ ]{\ \sim\ }B_h.
\]
The induced differential is \eqref{eq:3-18} with its alternating
signs. This is the normalized relative bar complex. The description
as a quotient by degenerate tensors is the normalization construction
of \resultcite{Lemma~14.23.6 and Remark~14.23.7, Tags 019A and 0FKI}{Stacks},
applied to the relative bar simplicial bimodule.

For exactness, we use the $S_0$-relative version of the standard
unit-insertion contraction recorded in \resultcite{\S2, p.~4}{IIVZ14}.
Under $R/S_0\simeq J_0$, insertion of the first factor becomes
insertion of $\bar u$. Define
\begin{equation}\label{eq:3-20}
\begin{aligned}
 s_{-1}:R&\longrightarrow B_0,
      &s_{-1}(v)&=1[\,]v,\\
 s_h:B_h&\longrightarrow B_{h+1},
      &s_h\bigl(u[c_1|\cdots|c_h]v\bigr)
        &=1[\bar u|c_1|\cdots|c_h]v\qquad(h\ge0).
\end{aligned}
\end{equation}
These maps are $S_0$-linear on the left and $R$-linear on the right.
For $h=0$,
\[
\begin{aligned}
 (d_1s_0+s_{-1}\mu)(u[\,]v)
   &=\bar u[\,]v-1[\,]\bar uv+1[\,]uv\\
   &=\bar u[\,]v+1[\,]\epsilon(u)v=u[\,]v.
\end{aligned}
\]
For $h\ge1$, the adjacent-product and final-action terms cancel in
pairs. Since $\overline{uc_1}=uc_1$, the remaining terms are
\[
\begin{aligned}
 &(d_{h+1}s_h+s_{h-1}d_h)\bigl(u[c_1|\cdots|c_h]v\bigr)\\
 &\qquad=\bar u[c_1|\cdots|c_h]v
          +1[\epsilon(u)c_1|c_2|\cdots|c_h]v\\
 &\qquad=u[c_1|\cdots|c_h]v.
\end{aligned}
\]
Together with $\mu s_{-1}=\id_R$, this proves exactness, split as a
complex of right $R$-modules.

For projectivity, use $R^e=R^{\mathrm{op}}\otimes_{\mathbb C}R$
with left action $(a^{\mathrm{op}}\otimes b)z=bza$ on an
$R$-bimodule. Decomposition by the vertex idempotents gives
\begin{equation}\label{eq:3-21}
\begin{gathered}
 B_h\simeq\bigoplus_{i,j}
 Re_i\otimes_{\mathbb C}(e_iJ_0^{\otimes h}e_j)
        \otimes_{\mathbb C}e_jR,\\
 Re_i\otimes_{\mathbb C}e_jR
      \simeq R^e(e_j^{\mathrm{op}}\otimes e_i).
\end{gathered}
\end{equation}
Every $B_h$ is therefore a finite direct sum of projective
$R^e$-modules. Write $B_\bullet=\bigoplus_{h\ge0}B_h[h]$ for the
unaugmented complex, with $B_h$ in cohomological degree $-h$ and
differential $d_h$ for $h\ge1$. We have proved
\[
 \mu:B_\bullet\longrightarrow R
 \quad\text{is a projective bimodule resolution.}
\]

For an ordinary left $R$-module $V$, tensoring the right $R$-linear
contraction with $V$ proves that $B_\bullet\otimes_RV\to V$ is exact.
Its terms are projective left $R$-modules, because
\[
 B_h\otimes_RV\simeq R\otimes J_0^{\otimes h}\otimes V
 \simeq\bigoplus_i Re_i\otimes_{\mathbb C}
                    (e_iJ_0^{\otimes h}\otimes V).
\]
Consequently, for a right $R$-module $M$, define
\begin{equation}\label{eq:3-22}
\begin{aligned}
 \operatorname{B}_R(M,V)&:=M\otimes_RB_\bullet\otimes_RV\\
   &\simeq\bigoplus_{h\ge0}
           (M\otimes J_0^{\otimes h}\otimes V)[h]
     \simeq M\otimes_R^{\mathbf L}V.
\end{aligned}
\end{equation}
The faces are \eqref{eq:3-18}, with the outer products replaced by
the right action on $M$ and the left action on $V$. For an
$R$-bimodule $P$, regard $P$ as a right $R^e$-module via
$p(a^{\mathrm{op}}\otimes b)=apb$. Then
\[
\begin{aligned}
 P\otimes_{R^e}B_h
   &\simeq\bigoplus_{i,j}
         e_jPe_i\otimes_{\mathbb C}e_iJ_0^{\otimes h}e_j\\
   &\simeq\tr_{S_0}(P\otimes J_0^{\otimes h}).
\end{aligned}
\]
With the projectivity just proved, the relative Tor construction
of \resultcite{Definition~2.3}{CLMS19} is the ordinary derived trace.
Its relative Hochschild complex is
\begin{equation}\label{eq:3-23}
 \operatorname{CH}_R(P):=P\otimes_{R^e}B_\bullet
 \simeq\bigoplus_{h\ge0}\tr_{S_0}(P\otimes J_0^{\otimes h})[h]
 \simeq P\otimes_{R^e}^{\mathbf L}R.
\end{equation}
Its last face acts on the left of $P$, as in \eqref{eq:3-10} with
all internal degrees zero. These constructions also apply to bounded
complexes by totalization. Indeed, $J_0^{\otimes h}=0$ for $h\ge \gamma$
by directedness, so the bar directions are finite and the resolutions
are bounded complexes of flat modules on the sides used for tensoring.

We now apply these resolutions to the summand $G_{t,\mathbf q}$. Put
\[
 M_0=T^t,\qquad M_j=A^{q_j}\quad(1\le j\le r),\qquad
 \sigma=r-D=r-t-\sum_{j=1}^r q_j,
\]
with each $M_j$ temporarily concentrated in degree zero. For
$\mathbf h=(h_0,\ldots,h_r)\in\mathbb Z_{\ge0}^{r+1}$, define
\[
 |\mathbf h|=\sum_{k=0}^r h_k,\qquad
 H_k(\mathbf h)=\sum_{j=0}^{k-1}h_j\quad(H_0=0),
\]
and let $\mathbf e_k$ denote the $k$th coordinate vector in
$\mathbb Z^{r+1}$, with coordinates numbered $0,\ldots,r$. Define
the ordinary vector space
\begin{equation}\label{eq:3-24}
 U_{\mathbf h}=\tr_{S_0}\!\left(
 M_0\otimes J_0^{\otimes h_0}\otimes M_1\otimes J_0^{\otimes h_1}
       \otimes\cdots\otimes M_r\otimes J_0^{\otimes h_r}\right).
\end{equation}
Here $h_k$ counts the degree-zero factors after $M_k$; the gap after
$M_r$ ends cyclically at $M_0$. For $r=0$, \eqref{eq:3-24} means
simply $U_{(h_0)}=\tr_{S_0}(M_0\otimes J_0^{\otimes h_0})$.

A length-$p$ tensor in $G_{t,\mathbf q}$ has exactly $r$ positive
algebra factors. Let
$n_1<\cdots<n_r$ be their positions, and set $n_0=0$, $n_{r+1}=p+1$.
The gap lengths and the inverse reconstruction are
\begin{equation}\label{eq:3-25}
 h_k=n_{k+1}-n_k-1\quad(0\le k\le r),\qquad
 n_j=j+H_j(\mathbf h)\quad(1\le j\le r),\qquad
 p=r+|\mathbf h|.
\end{equation}
Equivalently, the ordered algebra degrees are uniquely
\[
 (\underbrace{0,\ldots,0}_{h_0},q_1,
  \underbrace{0,\ldots,0}_{h_1},q_2,\ldots,q_r,
  \underbrace{0,\ldots,0}_{h_r}).
\]
For $r=0$, there are no positive positions and $h_0=p$. The
decomposition $J=J_0\oplus A^1\oplus A^2$ is direct, so different
multidegree words give distinct direct summands. Formula~\eqref{eq:3-25}
identifies each of them with precisely one $U_{\mathbf h}$. Its
original internal degree is $D$, hence
\[
 \deg_C=D-p=D-r-|\mathbf h|=-(|\mathbf h|+\sigma).
\]
Since $U_{\mathbf h}$ is placed in degree zero, this proves
\begin{equation}\label{eq:3-26}
 G_{t,\mathbf q}=\bigoplus_{\mathbf h\in\mathbb Z_{\ge0}^{r+1}}
             U_{\mathbf h}[|\mathbf h|+\sigma],\qquad
 G_{t,\mathbf q}^{u}=\bigoplus_{|\mathbf h|=D-r-u}U_{\mathbf h}.
\end{equation}
In these equalities of graded vector spaces, the differential
decreases one gap length.

We next define its individual gap faces. For $m_j\in M_j$ and
$c_{k,\ell}\in J_0$, let
$\mathbf c_k=(c_{k,1},\ldots,c_{k,h_k})$ and abbreviate a tensor by
\[
\begin{aligned}
 \zeta&=\langle m_0;\mathbf c_0;m_1;\mathbf c_1;\ldots;m_r;\mathbf c_r\rangle\\
   &=m_0\otimes c_{0,1}\otimes\cdots\otimes c_{0,h_0}
      \otimes m_1\otimes\cdots\otimes m_r
      \otimes c_{r,1}\otimes\cdots\otimes c_{r,h_r}.
\end{aligned}
\]
The brackets denote the displayed tensor in $\tr_{S_0}$. An empty
list $\mathbf c_k$ contributes no factor. For $h_k>0$ and
$0\le\ell\le h_k$, define the unsigned map
\[
 \partial_{k,\ell}^{\mathbf h}:
 U_{\mathbf h}\longrightarrow U_{\mathbf h-\mathbf e_k}
\]
as follows. The index $k$ chooses the gap, and $\ell$ chooses a face
within that gap. The first face uses the right action on $M_k$:
\begin{equation}\label{eq:3-27}
 \partial_{k,0}^{\mathbf h}(\zeta)
   =\langle\ldots;m_kc_{k,1};(c_{k,2},\ldots,c_{k,h_k});\ldots\rangle.
\end{equation}
For $1\le\ell<h_k$, the middle face multiplies two adjacent elements
of $J_0$:
\begin{equation}\label{eq:3-28}
 \partial_{k,\ell}^{\mathbf h}(\zeta)
 =\langle\ldots;m_k;
    (c_{k,1},\ldots,c_{k,\ell}c_{k,\ell+1},\ldots,c_{k,h_k});
     \ldots\rangle.
\end{equation}
For $k<r$, the last face uses the left action on $M_{k+1}$:
\begin{equation}\label{eq:3-29}
 \partial_{k,h_k}^{\mathbf h}(\zeta)
 =\langle\ldots;m_k;(c_{k,1},\ldots,c_{k,h_k-1});
          c_{k,h_k}m_{k+1};\mathbf c_{k+1};\ldots\rangle.
\end{equation}
All entries not displayed as changed remain in their original order.
For $k=r$, the last face instead acts on the initial coefficient:
\begin{equation}\label{eq:3-30}
 \partial_{r,h_r}^{\mathbf h}(\zeta)
 =\langle c_{r,h_r}m_0;\mathbf c_0;m_1;\mathbf c_1;\ldots;
          m_r;(c_{r,1},\ldots,c_{r,h_r-1})\rangle
 \quad(r\ge1).
\end{equation}
For the single-gap case $r=0$, the same rule reads
\[
 \partial_{0,h_0}^{(h_0)}
       \langle m_0;(c_{0,1},\ldots,c_{0,h_0})\rangle
 =\langle c_{0,h_0}m_0;(c_{0,1},\ldots,c_{0,h_0-1})\rangle.
\]
The cyclic face has no Koszul sign because $|c_{r,h_r}|=0$. Each
face removes exactly one $J_0$-factor, which explains its target
$U_{\mathbf h-\mathbf e_k}$. All maps are $S_0$-balanced; zero
products or zero module actions give the zero tensor.

Set
\begin{equation}\label{eq:3-31}
 \partial_k^{\mathbf h}:=\sum_{\ell=0}^{h_k}
          (-1)^\ell\partial_{k,\ell}^{\mathbf h}\quad(h_k>0),
 \qquad \partial_k^{\mathbf h}=0\quad(h_k=0).
\end{equation}
We omit the superscript when the source $U_{\mathbf h}$ is fixed.
For example, if $r=1$, $\mathbf h=(1,1)$, and
$\zeta=m_0\otimes c\otimes m_1\otimes d$ with $c,d\in J_0$, then
\[
\begin{aligned}
 \partial_{0,0}(\zeta)&=m_0c\otimes m_1\otimes d,
 &\partial_{0,1}(\zeta)&=m_0\otimes cm_1\otimes d,\\
 \partial_{1,0}(\zeta)&=m_0\otimes c\otimes m_1d,
 &\partial_{1,1}(\zeta)&=dm_0\otimes c\otimes m_1.
\end{aligned}
\]
In particular, $\partial_0=\partial_{0,0}-\partial_{0,1}$ and
$\partial_1=\partial_{1,0}-\partial_{1,1}$.

In the full bar word, the face $\partial_{k,\ell}$ has index
$k+H_k(\mathbf h)+\ell$: there are $k$ positive factors and
$H_k(\mathbf h)$ degree-zero factors before this gap.
Formula~\eqref{eq:3-15} therefore gives
\begin{equation}\label{eq:3-32}
 \bar b\big|_{U_{\mathbf h}}
 =\sum_{\substack{0\le k\le r\\h_k>0}}
        \sum_{\ell=0}^{h_k}(-1)^{k+H_k(\mathbf h)+\ell}
                                  \partial_{k,\ell}^{\mathbf h}
 =\sum_{k=0}^r(-1)^{k+H_k(\mathbf h)}\partial_k^{\mathbf h}.
\end{equation}
The faces involving only positive factors are absent by
Lemma~\ref{lem:3-9}.

For comparison with derived tensor products, form the ordinary
iterated bar complex of $R$-bimodules
\[
\begin{aligned}
 H_{t,\mathbf q}
   &:=M_0\otimes_R B_\bullet\otimes_RM_1
        \otimes_R\cdots\otimes_R B_\bullet\otimes_RM_r\\
   &\simeq M_0\otimes_R^{\mathbf L}M_1
        \otimes_R^{\mathbf L}\cdots\otimes_R^{\mathbf L}M_r.
\end{aligned}
\]
There is one copy of $B_\bullet$ in each of the first $r$ gaps;
for $r=0$, set $H_{t,\varnothing}=M_0$. The quasi-isomorphism follows
by applying \eqref{eq:3-22} successively, retaining the outer
$R$-actions. More explicitly, each
$B_\bullet\otimes_RM_j\to M_j$ is a quasi-isomorphism of bimodule
complexes whose source is a bounded complex of projective left
$R$-modules. Tensoring against the preceding complex therefore
computes the derived tensor product. Closing the remaining gap by
\eqref{eq:3-23} gives
\begin{equation}\label{eq:3-33}
\begin{gathered}
 B_{t,\mathbf q}:=H_{t,\mathbf q}\otimes_{R^e}B_\bullet
        \simeq K_{t,\mathbf q},\\
 B_{t,\mathbf q}=\bigoplus_{\mathbf h}U_{\mathbf h}[|\mathbf h|],
 \qquad
 \delta\big|_{U_{\mathbf h}}
       =\sum_{k=0}^r(-1)^{H_k(\mathbf h)}\partial_k^{\mathbf h}.
\end{gathered}
\end{equation}
Indeed, the preceding bar directions have total degree
$-H_k(\mathbf h)$. Their contribution to the tensor-product sign is
$(-1)^{-H_k(\mathbf h)}=(-1)^{H_k(\mathbf h)}$. The local identities
$\partial_k^2=0$ and $\partial_j\partial_k=\partial_k\partial_j$
for $j\ne k$ follow from associativity and the commuting left and
right actions on each $M_j$; with the displayed total signs they
imply $\delta^2=0$.

Finally, define an integer and a degree-zero map by
\[
 \eta(\mathbf h)=\sum_{k=0}^r(k+\sigma)h_k,\qquad
 \Phi\big|_{U_{\mathbf h}}=(-1)^{\eta(\mathbf h)}\id:
 G_{t,\mathbf q}\longrightarrow B_{t,\mathbf q}[\sigma].
\]
The source and target degrees agree by \eqref{eq:3-26}. For $h_k>0$,
\[
 \eta(\mathbf h-\mathbf e_k)+k+H_k(\mathbf h)
 \equiv\eta(\mathbf h)+\sigma+H_k(\mathbf h)\pmod2.
\]
Since $d_{B[\sigma]}=(-1)^\sigma\delta$, formulas
\eqref{eq:3-32} and \eqref{eq:3-33} yield
\[
 \Phi\bar b=(-1)^\sigma\delta\Phi,\qquad
 G_{t,\mathbf q}\xrightarrow[\Phi]{\ \sim\ }B_{t,\mathbf q}[\sigma]
       \simeq K_{t,\mathbf q}[r-D].
\]
This proves the assertion, including $r=0$.
\end{proof}

\begin{lemma}[{\resultcite{\S1, pp.~7--8}{CB92}}]\label{lem:3-11}
Suppose $R$ is a product of path algebras of finite acyclic quivers.
Every left $R$-module has projective dimension at most one. For an
ordinary left $R$-module $N$ and a complex
$M\in\mathrm D^{\ge d}$ of right $R$-modules,
\begin{equation}\label{eq:3-34}
 M\otimes_R^{\mathbf L}N\in\mathrm D^{\ge d-1}.
\end{equation}
If $N$ is projective, then
\begin{equation}\label{eq:3-35}
 M\otimes_R^{\mathbf L}N\simeq M\otimes_RN\in\mathrm D^{\ge d}.
\end{equation}
\end{lemma}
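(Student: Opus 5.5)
The plan has three steps: establish hereditarity of $R$, compute $M\otimes_R^{\mathbf L}N$ through a two-term projective resolution of $N$, and then treat complexes $M$ by truncation.

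\emph{Hereditarity.} Since $R=\prod_\Lambda R_\Lambda$ is a finite product, every left $R$-module splits as $N=\bigoplus_\Lambda c_\Lambda N$, and projectivity can be checked factorwise. It therefore suffices to treat a single path algebra $B=\mathbb C Q$ of a finite acyclic quiver. For a left $B$-module $N$ I would use the standard resolution recorded in \resultcite{\S1, pp.~7--8}{CB92}:
\[
 0\longrightarrow\bigoplus_{a\colon i\to j}B\varepsilon_j\otimes_{\mathbb C}\varepsilon_iN
 \xrightarrow{\ g\ }\bigoplus_{i}B\varepsilon_i\otimes_{\mathbb C}\varepsilon_iN
 \xrightarrow{\ \mathrm{ev}\ }N\longrightarrow0,
\]
with $g(b\otimes n)=ba\otimes n-b\otimes an$. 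The two terms are projective, being direct sums of the modules $B\varepsilon_j$ as in Lemma~\ref{lem:3-5}. Exactness is the standard argument: one proves injectivity of $g$ by looking at the term of maximal path length, and exactness in the middle by induction on path length. For the chains $Q_m$ that actually occur, one could also verify this directly from Lemmas~\ref{lem:3-5}--\ref{lem:3-7}. The conclusion is $\operatorname{pd}N\le1$, and the same holds for right modules by passing to $R^{\mathrm{op}}$, which is again a product of path algebras.

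\emph{Derived tensor product.} Write the resolution as $P_\bullet=(P_1\to P_0)$, placed in cohomological degrees $-1,0$. Since $P_\bullet$ is a bounded complex of projective, hence flat, left modules, $M\otimes_R^{\mathbf L}N\simeq\operatorname{Tot}(M\otimes_RP_\bullet)$ for any complex $M$. This is the cone of $M\otimes_RP_1\to M\otimes_RP_0$. Each $M\otimes_RP_i$ is a direct sum of copies of $M\varepsilon_j$, and since the functor $M\mapsto M\varepsilon_j$ is exact, $H^u(M\otimes_RP_i)=H^u(M)\otimes_RP_i$. Thus both terms lie in $\mathrm D^{\ge d}$ whenever $M\in\mathrm D^{\ge d}$. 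The cone places $M\otimes_RP_1$ in shift $[1]$, so it lies in $\mathrm D^{\ge d-1}$, and the long exact sequence gives \eqref{eq:3-34}. If $N$ is projective, we may take $P_1=0$, which yields $M\otimes_R^{\mathbf L}N\simeq M\otimes_RN$ with cohomology $H^u(M)\otimes_RN$, zero for $u<d$; this is \eqref{eq:3-35}.

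\emph{Unbounded complexes.} The only real care needed is for $M$ not bounded below. For this I would replace $M$ by its truncation $\tau^{\ge d}M$, which is quasi-isomorphic to $M$. Alternatively, one can note that tensoring with the bounded flat complex $P_\bullet$ preserves quasi-isomorphisms, so no boundedness of $M$ is needed. I do not expect a serious obstacle here. The main point is simply citing hereditarity correctly; everything else is bookkeeping of the shift.
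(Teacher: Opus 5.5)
Your proposal is correct and follows the paper's proof: the same standard two-term projective resolution, exactness via the maximal-path-length argument, and the identification $M\otimes_R^{\mathbf L}N\simeq\operatorname{Tot}(M\otimes_RP_\bullet)$ by K-flatness of the bounded projective resolution, followed by a shift count. The only difference is cosmetic: you bound cohomology through the cone triangle and $H^u(M\otimes_RP_i)=H^u(M)\otimes_RP_i$ (flatness of $P_i$), whereas the paper first replaces $M$ by its canonical truncation $\tau_{\ge d}$ and then checks that the total complex is zero termwise below degree $d-1$.
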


\begin{proof}
For the module resolution and its path-length proof, we follow
\resultcite{\S1, ``The standard resolution'' and ``Consequences'', pp.~7--8}{CB92}.
The derived bounds will then be obtained by truncation and
totalization. It suffices to treat $B=\mathbb C Q$. For $V=(V_i,V_a)$ and
$P_i=Be_i$, consider
\begin{equation}\label{eq:3-36}
 0\longrightarrow\bigoplus_{a:i\to j}P_j\otimes_{\mathbb C}V_i
   \xrightarrow{d}\bigoplus_iP_i\otimes_{\mathbb C}V_i
   \xrightarrow{\varepsilon}V\longrightarrow0,
\end{equation}
where
\[
 \varepsilon(be_i\otimes v)=bv,\qquad
 d(be_j\otimes v)=ba\otimes v-b\otimes V_a(v),\qquad
 \varepsilon d=0.
\]
At vertex $k$, every nonstationary path $p:i\to k$ has a unique
first arrow: $p=p'a$, $a:i\to j$. Hence
\[
 p\otimes v=p'a\otimes v\equiv p'\otimes V_a(v)
       \equiv e_k\otimes V_p(v)\pmod{\operatorname{im}d}.
\]
Consequently,
\[
 e_k(\operatorname{coker}d)\xrightarrow[\varepsilon]{\ \sim\ }V_k,
 \qquad\ker\varepsilon=\operatorname{im}d.
\]
To prove injectivity, group a nonzero source element by paths:
\[
 z=\sum_{a,b}b\otimes v_{a,b},\qquad
 \ell=\max\{|b|:v_{a,b}\ne0\}.
\]
The path-length-$(\ell+1)$ part of $d(z)$ is
\[
 \sum_{|b|=\ell}ba\otimes v_{a,b}\ne0,
\]
because $ba$ uniquely determines $(b,a)$. Thus $\ker d=0$. All
$P_i$ are projective, proving the asserted resolution.

Choose any cochain complex $(C^\bullet,d_C)$ of right $R$-modules
representing $M$. The hypothesis $M\in\mathrm D^{\ge d}$ says
\[
 H^u(C^\bullet)=0\qquad(u<d),
\]
and we use the canonical truncation
$\widetilde M^\bullet=\tau_{\ge d}C^\bullet$ of
\resultcite{Tag 0118}{Stacks}, defined by
\begin{equation}\label{eq:3-37}
 \widetilde M^u=
 \begin{cases}
  0,&u<d,\\
  C^d/\operatorname{im}(d_C^{d-1}),&u=d,\\
  C^u,&u>d.
 \end{cases}
\end{equation}
Its differential is zero below $d$, is induced by $d_C^d$ in degree
$d$, and agrees with $d_C^u$ above $d$:
\[
 d_{\widetilde M}^d([c])=d_C^d(c),\qquad
 d_{\widetilde M}^u=d_C^u\quad(u>d).
\]
The first formula is well-defined because $d_C^dd_C^{d-1}=0$.
The quotient in degree $d$, together with the identity above $d$
and the zero maps below $d$, defines a chain map
\[
 q:C^\bullet\longrightarrow\widetilde M^\bullet.
\]
At the boundary degree,
\[
 H^d(\widetilde M^\bullet)
  =\ker\!\left(C^d/\operatorname{im}d_C^{d-1}\longrightarrow C^{d+1}\right)
  =\frac{\ker d_C^d}{\operatorname{im}d_C^{d-1}}
  =H^d(C^\bullet).
\]
In all other degrees,
\[
 H^u(\widetilde M^\bullet)=
 \begin{cases}
  0=H^u(C^\bullet),&u<d,\\
  H^u(C^\bullet),&u>d.
 \end{cases}
\]
Thus $q$ is a quasi-isomorphism and $\widetilde M^\bullet$
represents $M$.

Choose the projective resolution of $N$ just proved, and denote it by
\[
 P^\bullet=[P^{-1}\xrightarrow{d_P}P^0]\longrightarrow N[0],
 \qquad P^v=0\quad(v\notin\{-1,0\}).
\]
The bounded complex $P^\bullet$ consists of flat left $R$-modules
and is therefore $K$-flat: $\operatorname{Tot}(Z^\bullet\otimes_RP^\bullet)$
is acyclic for every acyclic complex $Z^\bullet$ of right modules
\resultcite{Tag 064K}{Stacks}. In this two-term case, filter that
total complex by its two columns. Its successive quotients are
isomorphic to shifts of the acyclic complexes
$Z^\bullet\otimes_RP^v$, $v=-1,0$. Therefore tensoring with
$P^\bullet$ preserves quasi-isomorphisms and
\begin{equation}\label{eq:3-38}
 M\otimes_R^{\mathbf L}N
      \simeq\operatorname{Tot}(\widetilde M^\bullet\otimes_RP^\bullet).
\end{equation}
Here the total differential on a homogeneous tensor is
\[
 d(m\otimes p)=d_{\widetilde M}(m)\otimes p
        +(-1)^u m\otimes d_P(p),\qquad m\in\widetilde M^u.
\]
The degree-$u$ term is
\begin{equation}\label{eq:3-39}
 \operatorname{Tot}(\widetilde M^\bullet\otimes_RP^\bullet)^u
  =(\widetilde M^u\otimes_RP^0)
       \oplus(\widetilde M^{u+1}\otimes_RP^{-1})
  =0\qquad(u<d-1).
\end{equation}
This proves \eqref{eq:3-34}. If $N$ is projective, take $P^{-1}=0$
and $P^0=N$; then the total complex is zero below $d$, proving
\eqref{eq:3-35}.

The same argument retains auxiliary bimodule actions. Truncation
is performed in the abelian category of bimodules; the resolution
\eqref{eq:3-36} is functorial, with any additional right action on
the $V_i$-factors. The lower bound is checked after forgetting that
additional action.
\end{proof}

\begin{lemma}[{\resultcite{proof of Theorem~2.5}{CLMS19}}]\label{lem:3-12}
Let $R$ be as in Lemma~\ref{lem:3-11}, and let $W$ be its
$S_0$-bimodule of arrows. The sequence
\begin{equation}\label{eq:3-40}
 0\longrightarrow R\otimes_{S_0}W\otimes_{S_0}R
    \xrightarrow{\delta}R\otimes_{S_0}R
    \xrightarrow{\mu}R\longrightarrow0,
\end{equation}
with
\[
 \delta(x\otimes a\otimes y)=xa\otimes y-x\otimes ay,
 \qquad\mu(x\otimes y)=xy,
\]
is a projective bimodule resolution. In particular,
\[
 P\in\mathrm D^{\ge d}\quad\Longrightarrow\quad
 P\otimes_{R^e}^{\mathbf L}R\in\mathrm D^{\ge d-1}.
\]
\end{lemma}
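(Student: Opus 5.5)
We prove Lemma~\ref{lem:3-12} in three steps: check that \eqref{eq:3-40} is a complex of projective bimodules, prove exactness directly by a path-length argument in the style of Lemma~\ref{lem:3-11}, and then read off the degree bound by tensoring with this two-term resolution.

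\emph{Projectivity.} Decompose the two terms by vertex idempotents, as in \eqref{eq:3-21}:
\[
R\otimes_{S_0}W\otimes_{S_0}R\simeq\bigoplus_{a:i\to j}Re_j\otimes_{\mathbb C}e_iR,
\qquad
R\otimes_{S_0}R\simeq\bigoplus_iRe_i\otimes_{\mathbb C}e_iR.
\]
Each summand is isomorphic to a cyclic projective $R^e$-module $R^e(e_i^{\mathrm{op}}\otimes e_j)$, so both terms are projective. The identity $\mu\delta=0$ is immediate, and $\mu$ is surjective since $\mu(1\otimes1)=1$.

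\emph{Exactness.} The middle and right parts can be handled either by the unit-insertion contraction \eqref{eq:3-20} truncated to length one, or by the coset argument from \eqref{eq:3-36}. Concretely, any $p\otimes q$, with $p,q$ paths, is congruent modulo $\operatorname{im}\delta$ to $e\otimes pq$: we move the arrows of $p$ across one at a time using $xa\otimes y\equiv x\otimes ay$. Hence $\ker\mu=\operatorname{im}\delta$.

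Injectivity of $\delta$ is the step needing care, and we argue as in Lemma~\ref{lem:3-11}. Write a nonzero element as $z=\sum x\otimes a\otimes y$ in the path basis, and let $\ell$ be the maximal length of the left path $x$ among terms with nonzero coefficient. In $\delta(z)$, look at the terms $xa\otimes y$ whose left factor has length $\ell+1$. The word $xa$ determines both $x$ and its last arrow $a$. No term of the form $x'\otimes a'y'$ has a left factor of length $\ell+1$, since such left factors have length at most $\ell$. Therefore this top part of $\delta(z)$ cannot cancel, and $\delta(z)\ne0$. Alternatively, one can invoke the proof of Theorem~2.5 in \cite{CLMS19} directly.

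\emph{Degree bound.} Since the resolution is a bounded complex of projective bimodules concentrated in degrees $-1$ and $0$, the complex
\[
P\otimes_{R^e}\bigl[R\otimes W\otimes R\to R\otimes R\bigr]
\]
computes $P\otimes_{R^e}^{\mathbf L}R$ for any complex $P$; boundedness together with the column filtration used in Lemma~\ref{lem:3-11} justifies this. Replace $P$ by its truncation $\tau_{\ge d}P$, as in \eqref{eq:3-37}. The total complex in degree $u$ is
\[
\bigl(\tau_{\ge d}P\bigr)^u\otimes_{R^e}(R\otimes R)\;\oplus\;\bigl(\tau_{\ge d}P\bigr)^{u+1}\otimes_{R^e}(R\otimes W\otimes R).
\]
Both pieces vanish for $u<d-1$, which gives $P\otimes_{R^e}^{\mathbf L}R\in\mathrm D^{\ge d-1}$.

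The main obstacle is injectivity of $\delta$; the path-length leading-term argument handles it because $R$ is a path algebra with no relations.
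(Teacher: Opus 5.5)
Your proposal is correct and follows essentially the same route as the paper. You use the same idempotent decomposition for projectivity, the same leading-term path-length argument for injectivity of $\delta$ (you maximize the left path length where the paper maximizes the right one, a mirror-image choice), the same arrow-moving/telescoping congruence for exactness at $R\otimes_{S_0}R$, and the same two-term truncation bound as in Lemma~\ref{lem:3-11}.
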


\begin{proof}
The resolution is recalled from
\resultcite{proof of Theorem~2.5, equation~(2.1)}{CLMS19}, for
$R=T_{S_0}(W)$. Semisimplicity of $S_0$ makes its relative
projective terms projective bimodules. The proof below verifies
exactness in a path basis by a longest-path argument and telescoping;
the cited proof gives a contracting homotopy for general tensor
algebras. Treat one factor $B=\mathbb C Q$, with
$S=\bigoplus_i\mathbb C e_i$. The two terms decompose as
\[
 B\otimes_SB\simeq\bigoplus_iBe_i\otimes_{\mathbb C}e_iB,
 \qquad
 B\otimes_SW\otimes_SB
      \simeq\bigoplus_{a:i\to j}Be_j\otimes_{\mathbb C}e_iB.
\]
They are projective bimodules, and $\mu\delta=0$. For a nonzero
linear combination of path tensors $p\otimes a\otimes q$, choose
maximal $|q|=\ell$. The second-path-length-$(\ell+1)$ part of its
image is a nonzero combination of
\[
 -p\otimes aq,\qquad |q|=\ell,
\]
since the last arrow of $aq$ uniquely recovers $a$ and $q$.
Thus $\delta$ is injective.

For $q=a_m\cdots a_1$, telescoping gives
\[
 \sum_{\nu=1}^m
 \delta(pa_m\cdots a_{\nu+1}\otimes a_\nu
                          \otimes a_{\nu-1}\cdots a_1)
      =pq\otimes e_{s(q)}-p\otimes q,
\]
where $s(q)$ is the source of $q$ and empty products are stationary
paths. Therefore
\[
 p\otimes q\equiv pq\otimes e_{s(q)}\pmod{\operatorname{im}\delta},
 \qquad
 (B\otimes_SB)/\operatorname{im}\delta
           \xrightarrow[\mu]{\ \sim\ }B.
\]
This proves exactness. Taking the finite direct sum over the factors
of $R$ gives \eqref{eq:3-40}. Its degrees are $-1,0$, so the final
bound follows as in Lemma~\ref{lem:3-11}.
\end{proof}

\begin{lemma}\label{lem:3-13}
Suppose $R$ is a product of path algebras of finite acyclic quivers
and $A^1$ is projective as a left $R$-module. In the notation of
Lemma~\ref{lem:3-10}, set $s=\#\{i:q_i=2\}$. Then
\[
 K_{t,\mathbf q}\in\mathrm D^{\ge-s-1},\qquad
 G_{t,\mathbf q}\in\mathrm D^{\ge t-1}.
\]
\end{lemma}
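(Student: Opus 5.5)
We estimate the iterated derived tensor product factor by factor, using Lemma~\ref{lem:3-11} for each $\otimes_R^{\mathbf L}$, and then Lemma~\ref{lem:3-12} for the final trace. Put
\[
 M^{(0)}=T^t,\qquad M^{(k)}=M^{(k-1)}\otimes_R^{\mathbf L}A^{q_k}\quad(1\le k\le r),
\]
each regarded as a complex of $R$-bimodules. We will show by induction on $k$ that
\[
 M^{(k)}\in\mathrm D^{\ge -s_k},\qquad s_k=\#\{i\le k:q_i=2\}.
\]
The base case $k=0$ holds because $T^t$ is an ordinary module in degree zero.

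For the inductive step, suppose $M^{(k-1)}\in\mathrm D^{\ge -s_{k-1}}$. If $q_k=1$, then $A^{q_k}=A^1$ is a projective left $R$-module by hypothesis. Hence \eqref{eq:3-35} gives $M^{(k)}\in\mathrm D^{\ge -s_{k-1}}=\mathrm D^{\ge -s_k}$. If $q_k=2$, then $A^2$ is merely an ordinary left $R$-module. In that case \eqref{eq:3-34} gives $M^{(k)}\in\mathrm D^{\ge -s_{k-1}-1}=\mathrm D^{\ge -s_k}$.

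Lemma~\ref{lem:3-11} is stated for one-sided modules, so the extra right $R$-action must be carried along. This is covered by the closing paragraph of that lemma's proof: truncation is done in bimodules, the resolution is functorial, and the bound is checked after forgetting the extra action. We also need the successive derived tensor products in \eqref{eq:3-16} to be computed by this iteration. That follows from the explicit construction in Lemma~\ref{lem:3-10}, where each $B_\bullet\otimes_R M_j$ is a bounded complex of projective left modules resolving $M_j$.

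At $k=r$ we have $M^{(r)}\in\mathrm D^{\ge -s}$. Lemma~\ref{lem:3-12} then gives
\[
 K_{t,\mathbf q}=M^{(r)}\otimes_{R^e}^{\mathbf L}R\in\mathrm D^{\ge -s-1}.
\]
This is the first assertion.

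For the second assertion, \eqref{eq:3-17} gives $G_{t,\mathbf q}\simeq K_{t,\mathbf q}[r-D]$. Since $H^u(K[m])=H^{u+m}(K)$, the complex $K[m]$ lies in $\mathrm D^{\ge -s-1-m}$. Taking $m=r-D$, and writing $D=t+\sum_i q_i=t+r+s$ because $\sum q_i=(r-s)+2s$, the lower bound becomes
\[
 -s-1-(r-D)=-s-1-r+t+r+s=t-1.
\]
Hence $G_{t,\mathbf q}\in\mathrm D^{\ge t-1}$.

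The main obstacle is the bookkeeping in the inductive step. One must check that the iterated derived tensor product may be estimated one factor at a time while retaining bimodule structure, and that only the factors with $q_i=2$ cost a degree. The rest is a direct degree count.
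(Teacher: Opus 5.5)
Your proposal is correct and follows essentially the same route as the paper's proof. Both induct on the number of factors using the two bounds of Lemma~\ref{lem:3-11}, where only the factors with $q_i=2$ lose one degree. Both then close the word with Lemma~\ref{lem:3-12} and finish with the identical shift computation via \eqref{eq:3-17} and $\sum_i q_i=r+s$.
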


\begin{proof}
Regard the selected internal-degree pieces as modules in degree
zero, and put
\[
 H_0=T^t,\qquad H_i=H_{i-1}\otimes_R^{\mathbf L}A^{q_i},
 \qquad s_i=\#\{j\le i:q_j=2\}.
\]
Lemma~\ref{lem:3-11} gives
\[
 H_{i-1}\in\mathrm D^{\ge-s_{i-1}}
 \quad\Longrightarrow\quad
 H_i\in
 \begin{cases}
  \mathrm D^{\ge-s_{i-1}},&q_i=1,\\
  \mathrm D^{\ge-s_{i-1}-1},&q_i=2,
 \end{cases}
 \ =\mathrm D^{\ge-s_i}.
\]
Hence $H_r\in\mathrm D^{\ge-s}$, and Lemma~\ref{lem:3-12} gives
$K_{t,\mathbf q}\in\mathrm D^{\ge-s-1}$. By
Lemma~\ref{lem:3-10},
\[
 G_{t,\mathbf q}\simeq K_{t,\mathbf q}
                      \left[r-t-\sum_iq_i\right],
\]
and
\[
 -s-1+t+\sum_iq_i-r=-s-1+t+(r+s)-r=t-1.
\]
Thus $G_{t,\mathbf q}\in\mathrm D^{\ge t-1}$, also for $r=s=0$.
\end{proof}

Convergence for finite filtrations implies the next vanishing criterion
\resultcite{Lemma~12.24.11, Tag 012W}{Stacks}.

\begin{lemma}\label{lem:3-14}
Let $0=F_{-1}C\subseteq F_0C\subseteq\cdots\subseteq F_mC=C$
be a finite filtration by subcomplexes. If
$\operatorname{gr}_r^F C\in\mathrm D^{\ge d}$ for every $r$, then
$C\in\mathrm D^{\ge d}$.
\end{lemma}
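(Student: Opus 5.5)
The plan is an induction on the length of the filtration, applying the long exact cohomology sequence to the short exact sequences of complexes coming from consecutive filtration steps. For $-1\le r\le m$ we prove that $F_rC\in\mathrm D^{\ge d}$, meaning $H^u(F_rC)=0$ for every $u<d$. The base case $r=-1$ holds trivially because $F_{-1}C=0$.

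For the inductive step, fix $r\ge0$ and assume $F_{r-1}C\in\mathrm D^{\ge d}$. Since $F_{r-1}C\subseteq F_rC$ is an inclusion of subcomplexes, there is a short exact sequence of complexes
\[
 0\longrightarrow F_{r-1}C\longrightarrow F_rC\longrightarrow\operatorname{gr}_r^FC\longrightarrow0 .
\]
Its long exact cohomology sequence contains, for each $u$, the segment
\[
 H^u(F_{r-1}C)\longrightarrow H^u(F_rC)\longrightarrow H^u(\operatorname{gr}_r^FC).
\]
Take $u<d$. The left term vanishes by the induction hypothesis, and the right term vanishes by the hypothesis on $\operatorname{gr}_r^FC$. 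Exactness in the middle then forces $H^u(F_rC)=0$.

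After $m+1$ steps we reach $F_mC=C\in\mathrm D^{\ge d}$, which is the claim. The argument requires no boundedness or convergence assumption beyond the finiteness of the filtration, and it is exactly the elementary form of the spectral-sequence convergence cited from \resultcite{Lemma~12.24.11, Tag 012W}{Stacks}. The only point to check is that each $F_rC$ is a subcomplex, which is part of the hypothesis, so that the quotient complex is well defined.

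In the application this lemma will be used with $d=0$. By Lemma~\ref{lem:3-13} each $G_{t,\mathbf q}$ lies in $\mathrm D^{\ge t-1}$. Combined with the decomposition in Lemma~\ref{lem:3-9}, this gives the graded-piece bounds needed there; since $t\ge1$ for all nonzero $T^t$, they yield $\mathrm D^{\ge 0}$.
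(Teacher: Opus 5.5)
Your induction on $r$ through the long exact sequence of $0\to F_{r-1}C\to F_rC\to\operatorname{gr}_r^FC\to0$ is correct and is exactly the paper's proof. Your closing remark about the application is wrong, though it plays no role in the lemma: since $T^t=0$ for $t<2$, Lemma~\ref{lem:3-15} is applied with $t_0=2$ and this lemma with $d=t_0-1=1$, which is what gives $\NHH^0(\mathcal E,X)=0$ in Proposition~\ref{prop:3-16}, whereas a bound in $\mathrm D^{\ge0}$ would only kill negative degrees and would not yield $h_X(\mathcal E)\ge1$.
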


\begin{proof}
The short exact sequences
$0\to F_{r-1}C\to F_rC\to\operatorname{gr}_r^F C\to0$ give,
for $u<d$,
\[
 H^u(F_{r-1}C)=0\quad\Longrightarrow\quad
 0\longrightarrow H^u(F_rC)\longrightarrow
       H^u(\operatorname{gr}_r^F C)=0.
\]
Induct on $r$, starting from $F_{-1}C=0$.
\end{proof}

\begin{lemma}\label{lem:3-15}
Let $A=A^0\oplus A^1\oplus A^2$ be a finite graded directed
algebra, with $S_0,J,R,J_0$ as in
\eqref{eq:3-6}--\eqref{eq:3-7}. Suppose $R$ is a product of path
algebras of finite acyclic quivers and $A^1$ is projective as a
left $R$-module. If $T$ is a graded $A$-bimodule with $T^t=0$
for $t<t_0$, then
\[
 H^u C(A,T)=0\qquad(u<t_0-1).
\]
\end{lemma}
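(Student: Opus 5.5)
The plan is to combine the filtration of Lemma~\ref{lem:3-9} with the piecewise estimate of Lemma~\ref{lem:3-13} and the finite-filtration criterion of Lemma~\ref{lem:3-14}. Lemmas~\ref{lem:3-9}--\ref{lem:3-13} were stated for the abstract setting: any finite graded directed algebra $A=A^0\oplus A^1\oplus A^2$ with $R=A^0$ and $J$ as in \eqref{eq:3-6}--\eqref{eq:3-7}, and any graded bimodule $T$. So we may apply them directly, under the hypotheses of Lemma~\ref{lem:3-15} that $R$ is a product of path algebras of finite acyclic quivers and that $A^1$ is left $R$-projective.

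First, Lemma~\ref{lem:3-9} gives the finite filtration
\[
0=F_{-1}C\subseteq F_0C\subseteq\cdots\subseteq F_{\gamma-1}C=C(A,T)
\]
by subcomplexes. Its graded pieces decompose as complexes:
\[
\operatorname{gr}_r^FC=\bigoplus_{t}\bigoplus_{\mathbf q\in\{1,2\}^r}G_{t,\mathbf q}.
\]
Only $t\ge t_0$ contributes, since $T^t=0$ for $t<t_0$ makes $G_{t,\mathbf q}=0$. For each remaining $(t,\mathbf q)$, Lemma~\ref{lem:3-13} gives
\[
G_{t,\mathbf q}\in\mathrm D^{\ge t-1}\subseteq\mathrm D^{\ge t_0-1}.
\]
Lemma~\ref{lem:3-13} itself rests on the identification $G_{t,\mathbf q}\simeq K_{t,\mathbf q}[r-D]$ of Lemma~\ref{lem:3-10}. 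It then uses the bounds of Lemma~\ref{lem:3-11}: a loss of at most one degree for each $A^2$ factor, and none for the projective $A^1$ factors. Finally it uses the bound of Lemma~\ref{lem:3-12}, a loss of one degree for the trace.

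The main step to check is that the direct sum over $t$ is harmless for cohomological vanishing. Cohomology commutes with direct sums, so $H^u(\operatorname{gr}^F_rC)=\bigoplus H^u(G_{t,\mathbf q})=0$ for $u<t_0-1$. Moreover $T$ is finite-dimensional in the applications, although this is not needed. Hence each $\operatorname{gr}_r^FC\in\mathrm D^{\ge t_0-1}$.

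Lemma~\ref{lem:3-14} applied to this finite filtration then yields $C(A,T)\in\mathrm D^{\ge t_0-1}$, that is, $H^uC(A,T)=0$ for $u<t_0-1$.

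I expect no real obstacle, since the work is in the earlier lemmas. The only care needed is the bookkeeping that the summands $G_{t,\mathbf q}$ with $t<t_0$ vanish identically, and that the decomposition in Lemma~\ref{lem:3-9} is one of complexes rather than just of graded spaces; Lemma~\ref{lem:3-9} already records both facts.
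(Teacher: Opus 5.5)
Your proposal is correct and follows the paper's proof: the decomposition of $\operatorname{gr}_r^F C(A,T)$ from Lemma~\ref{lem:3-9} restricted to $t\ge t_0$, the bound $G_{t,\mathbf q}\in\mathrm D^{\ge t-1}\subseteq\mathrm D^{\ge t_0-1}$ from Lemma~\ref{lem:3-13}, and the finite-filtration criterion of Lemma~\ref{lem:3-14}. The remark that cohomology commutes with the direct sum over $(t,\mathbf q)$ is the one step the paper leaves implicit.
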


\begin{proof}
Lemmas~\ref{lem:3-9} and~\ref{lem:3-13} give
\[
 \operatorname{gr}_r^F C(A,T)
 =\bigoplus_{t\ge t_0}\ \bigoplus_{\mathbf q\in\{1,2\}^r}
       G_{t,\mathbf q}\in\mathrm D^{\ge t_0-1}.
\]
The filtration is finite by Lemma~\ref{lem:3-9};
Lemma~\ref{lem:3-14} yields $C(A,T)\in\mathrm D^{\ge t_0-1}$.
\end{proof}

We now apply the algebraic estimate to the normal bimodule.
Proposition~\ref{prop:3-8} supplies the hereditary degree-zero
algebra and the projectivity of $A^1$ from the geometric condition
\eqref{eq:3-3}. The inverse Serre functor places the normal bimodule
in internal degrees at least two. Together, these two facts turn
\eqref{eq:3-3} into a positive-height obstruction to fullness.

\begin{proposition}\label{prop:3-16}
Let $\mathcal E$ be a nonempty exceptional collection of line
bundles on a smooth projective complex surface. If
\eqref{eq:3-3} holds, then $h_X(\mathcal E)\ge1$. In particular,
$\mathcal E$ is not full. This applies to every such collection
when $K_X$ is nef.
\end{proposition}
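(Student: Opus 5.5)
The plan is to feed the normal Hochschild spectral sequence \eqref{eq:2-7} into the algebraic estimate of Lemma~\ref{lem:3-15}. Take $A=\Ext^*_X(\bigoplus_i\cL_i,\bigoplus_i\cL_i)$ and $T=H^*(\mathcal T)$, the cohomology of the normal bimodule, with entries $e_iTe_j=\Ext^*(\cL_j,S_X^{-1}\cL_i)$ by \eqref{eq:2-4}.

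First, the hypotheses of Lemma~\ref{lem:3-15} hold. Since $X$ is a surface, $A^q=0$ for $q\notin\{0,1,2\}$. The collection is directed with one-dimensional diagonal, because each $\cL_i$ is exceptional. Under \eqref{eq:3-3}, Proposition~\ref{prop:3-8} shows that $R=A^0$ is a product of path algebras of linearly oriented chains and that $A^1$ is left projective over $R$. For the lower bound on $T$, \eqref{eq:2-1} gives $S_X^{-1}\cL_i=\cL_i\otimes\omega_X^{-1}[-2]$. Hence $e_iT^te_j=\Ext^{t-2}(\cL_j,\cL_i\otimes\omega_X^{-1})$ vanishes for $t<2$, so $t_0=2$ and Lemma~\ref{lem:3-15} gives $H^uC(A,T)=0$ for $u<1$.

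Second, identify $C(A,T)$ with the $E_1$ page. By \eqref{eq:2-7} and the paragraph after it, $E_1^{-p,q}$ is exactly the degree-$(q-p)$ part of $C_p(A,T)[p]$. The grading matches because $\deg_C=D-p$ by \eqref{eq:3-9}. The differential $d_1$ is the graded Hochschild differential \eqref{eq:3-10}--\eqref{eq:3-11}, with right action by composition and left action through $S_X^{-1}$. So $E_2$ is the cohomology of $C(A,T)$, which vanishes in total degrees $<1$.

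Third, pass to the abutment. The bar-length filtration is finite, since $p\le\gamma-1$, so the spectral sequence converges. Every later page is a subquotient of $E_2$ in each total degree, so $E_\infty$ vanishes in total degree $<1$. Therefore $\NHH^u(\cE,X)=0$ for $u<1$, which gives $h_X(\cE)\ge1$. Proposition~\ref{prop:2-2} then shows that $\cE$ is not full. If $K_X$ is nef, \eqref{eq:3-3} holds automatically for every exceptional collection of line bundles. Such a collection also forces $p_g=q=0$, so the standing assumptions of Subsections~\ref{subsec:divisor-geometry}--\ref{subsec:bar-height} are met and the argument applies.

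The main obstacle is the second step: checking that Kuznetsov's $E_1$ differential, with its signs and its endpoint action transported by $S_X^{-1}$, matches the complex $C(A,T)$ of \eqref{eq:3-10}--\eqref{eq:3-11} up to a sign-twisting isomorphism. I would handle this by appealing directly to \resultcite{Proposition~3.7}{Kuz15}, as the paper's description after \eqref{eq:2-7} already does. I would then note that any sign discrepancy is a diagonal $\pm1$ rescaling of summands, which does not change cohomology.
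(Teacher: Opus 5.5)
Your proposal is correct and follows the paper's proof essentially step for step: Proposition~\ref{prop:3-8} together with the vanishing $T^t=0$ for $t<2$ from \eqref{eq:2-1} puts you in Lemma~\ref{lem:3-15} with $t_0=2$, which gives $E_2^{-p,q}=0$ for $q-p<1$ in \eqref{eq:2-7}, and finite convergence plus Proposition~\ref{prop:2-2} finish the argument. One small caution: your fallback claim that any sign discrepancy in $d_1$ is a diagonal $\pm1$ rescaling is not true in general, and you do not need it, since, like the paper, you can take the identification of $d_1$ with \eqref{eq:3-10}--\eqref{eq:3-11} directly from \resultcite{Proposition~3.7}{Kuz15}.
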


\begin{proof}
Exceptionality gives $p_g=q=0$ and
$A=H^*(\mathcal A)=A^0\oplus A^1\oplus A^2$.
For $T=H^*(\mathcal T)$, \eqref{eq:2-1} gives
\[
 e_iT^te_j=\Ext^t(\cL_j,S_X^{-1}\cL_i)
   =H^{t-2}(X,\cL_j^\vee\otimes\cL_i\otimes\omega_X^{-1})=0
       \qquad(t<2).
\]
Proposition~\ref{prop:3-8} and Lemma~\ref{lem:3-15}, with $t_0=2$,
imply in \eqref{eq:2-7}
\[
\begin{aligned}
 E_2^{-p,q}=0\quad(q-p<1)
 &\quad\Longrightarrow\quad E_\infty^{-p,q}=0\quad(q-p<1)\\
 &\quad\Longrightarrow\quad\NHH^u(\mathcal E,X)=0\quad(u<1).
\end{aligned}
\]
Thus $h_X(\mathcal E)\ge1$, and Proposition~\ref{prop:2-2} excludes
fullness. Nefness of $K_X$ implies \eqref{eq:3-3}.
\end{proof}

The $E_2$-vanishing uses Yoneda multiplication through
Proposition~\ref{prop:3-8} and Lemma~\ref{lem:3-15}.
For a full collection, the contrapositive forces a prime component
of an effective forward difference to have negative canonical
degree. After moving the blowup centers as below, the classification
of such differences will identify an exceptional curve and provide
the pair of line bundles needed for descent.

\subsection{Reduction to blowups at general centers}\label{subsec:general-centers}

To apply the height obstruction to a nonminimal surface, we first move the blowup centers away from the rational curves of its minimal model.  Three facts are needed.  Ordered towers of point blowups, including infinitely near centers, occur in one smooth irreducible family, and every line bundle on a fiber extends to that family.  Fullness persists on an open neighborhood of a fiber carrying a full exceptional collection.  Finally, on a non-uniruled surface the rational curves form a countable family, so a configuration avoiding all of them can be chosen inside any prescribed nonempty open set.  We establish these facts separately and combine them in Corollary~\ref{cor:3-23}.

We begin with openness of fullness.  Proposition~\ref{prop:3-19} is the line-bundle form of \resultcite{Proposition~3.9}{Hu19}; its proof also describes the projection kernel that detects fullness.

\begin{lemma}[\resultcite{Theorem~1.2}{CS11}]\label{lem:3-17}
Let $X$ be smooth and projective, and let $K\in\Perf(X\times X)$.  For the integral transform
\[
 \Phi_K(F)=Rp_{2*}(Lp_1^*F\otimes^{\mathbf L}K),
\]
one has $\Phi_K=0$ if and only if $K=0$.
\end{lemma}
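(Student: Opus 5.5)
The nontrivial implication is that $\Phi_K=0$ forces $K=0$; the converse is immediate. I will test $\Phi_K$ on skyscraper sheaves and then descend from fibres to $K$ itself. For a closed point $x\in X$, the inclusion $i_x\colon\{x\}\times X\hookrightarrow X\times X$ gives
\[
 \Phi_K(\cO_x)=Rp_{2*}\bigl(Lp_1^*\cO_x\otimes^{\mathbf L}K\bigr)\simeq Li_x^*K,
\]
viewed as an object of $\Db(X)$. This uses the projection formula together with the fact that $p_2\circ i_x$ is an isomorphism. So $\Phi_K=0$ implies $Li_x^*K=0$ for every closed point $x$.

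The next step is to show that $K$ vanishes near every point $(x,y)$. Restricting further along the point $(x,y)$, the derived fibre of $K$ at $(x,y)$ is $L j_y^*Li_x^*K=0$. Since $K$ is perfect, take the largest $q$ with $\mathcal H^q(K)\ne0$, and choose a point $(x,y)$ in its support. By right exactness of the underived tensor product, the top cohomology of the derived fibre at $(x,y)$ is
\[
 \mathcal H^q(K)\otimes k(x,y),
\]
which is nonzero by Nakayama. This contradicts the vanishing of the derived fibre. Hence every cohomology sheaf vanishes and $K=0$.

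I expect the main obstacle to be the first identification $\Phi_K(\cO_x)\simeq Li_x^*K$, in particular checking flat base change and the projection formula carefully for perfect complexes. The Nakayama step is standard. Over a non-closed field, or at non-closed points, one can still restrict to closed points, because the support of a coherent sheaf always contains closed points.
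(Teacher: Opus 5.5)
Your proof is correct and takes the same approach as the paper: identify $\Phi_K(k(x))\simeq Li_x^*K$ via the slice $\{x\}\times X$, deduce that every derived fibre $K\otimes^{\mathbf L}k(x,y)$ at a closed point vanishes, and conclude $K=0$. The only difference is that the paper cites derived Nakayama for the final step, whereas you prove that step directly by taking the top nonzero cohomology sheaf of $K$ and applying right exactness and ordinary Nakayama.
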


\begin{proof}
This is the zero-functor case of \resultcite{Theorem~1.2}{CS11}.  We recall a direct proof using derived fibers.  For a closed point $x\in X$, write $i_x\colon X\hookrightarrow X\times X$, $y\mapsto(x,y)$.  Then
\[
 \Phi_K(k(x))\simeq Li_x^*K.
\]
Consequently,
\[
 \Phi_K=0
 \quad\Longrightarrow\quad
 K\otimes^{\mathbf L}k(x,y)=0\text{ for all closed }(x,y)
 \quad\Longrightarrow\quad K=0,
\]
by derived Nakayama \resultcite{Tag~0BCD}{Stacks}.  The converse follows from the definition of $\Phi_K$.
\end{proof}

\begin{lemma}[cf.\ \resultcite{Lemmas~2.10--2.12}{Hu19}]\label{lem:3-18}
Let $(\cL_1,\ldots,\cL_\gamma)$ be an exceptional collection of line bundles on $X$.  For the diagonal $\Delta\subset X\times X$, define
\[
 C_i=\Cone(\cL_i^\vee\boxtimes\cL_i\longrightarrow\cO_\Delta),
 \qquad P=\Phi_{C_1}\circ\cdots\circ\Phi_{C_\gamma},
\]
so that $\Phi_{C_\gamma}$ is applied first.  The functor $P$ is the projection to $\langle\cL_1,\ldots,\cL_\gamma\rangle^\perp$.  If $C$ is its convolution kernel, then
\[
 \langle\cL_1,\ldots,\cL_\gamma\rangle=\Db(X)
 \quad\Longleftrightarrow\quad P=0
 \quad\Longleftrightarrow\quad C=0.
\]
\end{lemma}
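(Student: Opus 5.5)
We first show that each $\Phi_{C_i}$ is the left mutation $\bL_{\cL_i}$ up to the paper's shift convention, i.e.\ the projection onto $\cL_i^\perp$ along $\langle\cL_i\rangle$. By flat base change and the projection formula, $\Phi_{\cL_i^\vee\boxtimes\cL_i}(F)\simeq R\Gamma(X,F\otimes\cL_i^\vee)\otimes\cL_i\simeq\RHom(\cL_i,F)\otimes\cL_i$. Likewise $\Phi_{\cO_\Delta}=\id$. Under these identifications the map $\cL_i^\vee\boxtimes\cL_i\to\cO_\Delta$, namely the trace/evaluation on the diagonal, induces the evaluation morphism. Hence $\Phi_{C_i}(F)$ is the cone of $\RHom(\cL_i,F)\otimes\cL_i\to F$, which is $\bL_{\cL_i}(F)$. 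Exceptionality of $\cL_i$ gives $\RHom(\cL_i,\bL_{\cL_i}F)=0$ and $F-\bL_{\cL_i}F\in\langle\cL_i\rangle$. Thus $\Phi_{C_i}$ is the left adjoint of the inclusion $\cL_i^\perp\hookrightarrow\Db(X)$ composed with that inclusion.

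Next we show that the composite $P$ is the projection to $\cA=\langle\cL_1,\ldots,\cL_\gamma\rangle^\perp$, by induction on $\gamma$ from the right. Set $P_k=\Phi_{C_k}\circ\cdots\circ\Phi_{C_\gamma}$. We claim that $P_k(F)\in\langle\cL_k,\ldots,\cL_\gamma\rangle^\perp$ and that $\Cone(F\to P_kF)$ lies in $\langle\cL_k,\ldots,\cL_\gamma\rangle$. The second part holds because each step changes $F$ only by an object of $\langle\cL_j\rangle$. For the first part, suppose $G\in\langle\cL_{k+1},\ldots,\cL_\gamma\rangle^\perp$. Then $\bL_{\cL_k}G$ is the cone of $\RHom(\cL_k,G)\otimes\cL_k\to G$. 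Semiorthogonality, $\RHom(\cL_j,\cL_k)=0$ for $j>k$, shows that $\cL_k$ lies in $\langle\cL_{k+1},\ldots,\cL_\gamma\rangle^\perp$. The cone therefore stays in that orthogonal and in addition lies in $\cL_k^\perp$. With $k=1$, $P$ is the projection functor of the semiorthogonal decomposition recalled in Subsection~\ref{subsec:exceptional-mutations}. The ordering of the composite, with the last object applied first, is the only bookkeeping point in this step.

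Finally we establish the equivalences. Fullness means $\cA=0$. If $\cA=0$, every $P(F)\in\cA$ vanishes, so $P=0$. Conversely, if $P=0$, then every $F$ lies in $\langle\cE\rangle$ by the cone statement, which gives fullness. The kernels $C_i$ are perfect on $X\times X$ since $X$ is smooth. Composition of integral transforms corresponds to convolution of kernels, $\Phi_{K}\circ\Phi_{K'}\simeq\Phi_{K'*K}$, with convolution given by $p_{13*}(p_{12}^*K'\otimes p_{23}^*K)$. Hence $P\simeq\Phi_C$ for a perfect kernel $C$, and Lemma~\ref{lem:3-17} gives $P=0\iff C=0$.

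The main obstacle is the identification in the first step. One must check that the kernel morphism $\cL_i^\vee\boxtimes\cL_i\to\cO_\Delta$ induces exactly the evaluation map functorially, so that cones of kernels give cones of functors. Taking cones is not functorial in a triangulated category. We would therefore argue at the level of kernels: $\Phi_{(-)}$ is an exact functor $\Db(X\times X)\to$ (functors), and in a dg enhancement it sends exact triangles of kernels to triangles of transforms, objectwise. This suffices, because the statements concern objects $P(F)$ and the vanishing of $C$.
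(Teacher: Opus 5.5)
Your proposal is correct and follows essentially the same route as the paper. You identify each $\Phi_{C_i}$ with the left mutation through $\cL_i$ and apply these in the order $i=\gamma,\ldots,1$, using $\RHom(\cL_j,\cL_i)=0$ for $j>i$ to keep the orthogonality already achieved. You then read off the equivalences from the resulting triangles and handle the kernel via convolution and Lemma~\ref{lem:3-17}.

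Your worry about the non-functoriality of cones is unnecessary. For fixed $F$, the assignment $K\mapsto\Phi_K(F)$ is already an exact functor, so the kernel triangle gives the mutation triangle objectwise without any dg enhancement.
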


\begin{proof}
The argument below specializes and expands the iterated evaluation construction in \resultcite{proof of Lemma~2.12}{Hu19}, with the mutation shifts fixed by Subsection~\ref{subsec:exceptional-mutations}.  Its kernel form follows from the convolution formula \resultcite{Tag~0FYP, Lemma~57.8.3}{Stacks}.  For $F_\gamma=F$, define successively
\[
 F_{i-1}=\Phi_{C_i}(F_i),
 \qquad
 \RHom(\cL_i,F_i)\otimes\cL_i\longrightarrow F_i\longrightarrow F_{i-1}
 \qquad(i=\gamma,\ldots,1).
\]
Exceptionality and semiorthogonality give
\begin{align*}
 \RHom(\cL_i,F_{i-1})&=0,\\
 \RHom(\cL_j,F_{i-1})&\simeq\RHom(\cL_j,F_i)\qquad(j>i).
\end{align*}
Thus $P(F)=F_0\in\langle\cL_1,\ldots,\cL_\gamma\rangle^\perp$, and
$P|_{\langle\cL_1,\ldots,\cL_\gamma\rangle^\perp}\simeq\id$.
The same triangles imply
\[
 P(F)=0\quad\Longrightarrow\quad F\in\langle\cL_1,\ldots,\cL_\gamma\rangle.
\]
The equivalences follow, using Lemma~\ref{lem:3-17} for the last one.
\end{proof}

\begin{proposition}[\resultcite{Proposition~3.9}{Hu19}]\label{prop:3-19}
Let $f\colon\mathcal X\to B$ be a smooth projective family of connected surfaces over a smooth complex variety, and let $\widetilde{\cL}_1,\ldots,\widetilde{\cL}_\gamma$ be line bundles on $\mathcal X$.  If their restrictions form a full exceptional collection on $X_{b_0}$, they do so over an open neighborhood of $b_0$.
\end{proposition}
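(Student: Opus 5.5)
We split the statement into two open conditions: exceptionality and fullness.

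\emph{Exceptionality.} Put $\widetilde{\cL}_{ij}=\widetilde{\cL}_j\otimes\widetilde{\cL}_i^\vee$ on $\mathcal X$. By \eqref{eq:2-2}, exceptionality of the fiber collection over $b$ means two things. First, $H^q(X_b,\cO_{X_b})=0$ for $q>0$. Second, $H^q(X_b,\widetilde{\cL}_{ji}|_{X_b})=0$ for all $q$ and all $i<j$. Each of these is a vanishing of cohomology of a line bundle, flat over $B$, on the fibers of the proper morphism $f$. Upper semicontinuity of $b\mapsto h^q(X_b,\cdot)$ makes each vanishing locus open. There are finitely many such conditions, so they hold on an open neighborhood $U_1$ of $b_0$.

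\emph{Fullness.} We use the relative version of the projection kernel of Lemma~\ref{lem:3-18}. Over $U_1$, let $\Delta_{\mathcal X/B}\subset\mathcal X\times_B\mathcal X$ be the relative diagonal. Define
\[
 \widetilde C_i=\Cone\bigl(\widetilde{\cL}_i^\vee\boxtimes\widetilde{\cL}_i\longrightarrow\cO_{\Delta_{\mathcal X/B}}\bigr).
\]
Because $f$ is smooth, $\mathcal X\times_B\mathcal X$ is smooth over $B$ and $\cO_{\Delta_{\mathcal X/B}}$ is perfect, so $\widetilde C_i$ is perfect. The map in the cone is the relative evaluation map. It is defined using $Rf_*\cO_{\mathcal X}\simeq\cO_B$, which holds on $U_1$ by cohomology and base change because $H^{>0}(\cO_{X_b})=0$ and $H^0=\mathbb C$. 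Let $\widetilde C$ be the relative convolution $\widetilde C_1\ast\cdots\ast\widetilde C_\gamma$ over $B$. This is a perfect complex on $\mathcal X\times_B\mathcal X$.

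The key step is base change. Derived restriction to the fiber over $b$ should carry $\widetilde C_i$ to $C_i$ on $X_b\times X_b$, and $\widetilde C$ to the convolution kernel $C$ of Lemma~\ref{lem:3-18} for $X_b$. For the $\widetilde C_i$ this holds since box products and the relative diagonal commute with base change, as $f$ is flat. For the convolution, the compositions are pushforwards along the projections $\mathcal X\times_B\mathcal X\times_B\mathcal X\to\mathcal X\times_B\mathcal X$. These are flat and proper, so derived base change (Tor-independence) applies.

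By Lemma~\ref{lem:3-18}, fullness at $b_0$ gives $Li_{b_0}^*\widetilde C=0$. The support of the perfect complex $\widetilde C$ is closed in $\mathcal X\times_B\mathcal X$. By derived Nakayama it misses the whole fiber over $b_0$. Its image in $B$ under the proper map $\mathcal X\times_B\mathcal X\to B$ is closed and does not contain $b_0$. On the complement $U\subseteq U_1$ every fiber kernel vanishes. Lemma~\ref{lem:3-18} then gives fullness on $U$.

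The main obstacle is the base-change compatibility of the iterated convolution with the fiberwise kernels. This needs careful bookkeeping of flatness and Tor-independence at each step. The semicontinuity and support arguments are routine by comparison.
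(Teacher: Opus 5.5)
Your proposal is correct and follows essentially the paper's route. For openness of fiberwise exceptionality you use upper semicontinuity of $h^q$, whereas the paper uses the cone of $\cO_B\to Rf_*\cO_{\mathcal X}$ and the perfect complexes $Rf_*(\widetilde{\cL}_j^\vee\otimes\widetilde{\cL}_i)$ with derived Nakayama, which is equivalent; for fullness you take the relative projection kernel of Lemma~\ref{lem:3-18}, apply derived base change to the convolution, and remove the image of its support under the proper map $\mathcal X\times_B\mathcal X\to B$, exactly as the paper does. One small correction: the map $p_1^*\widetilde{\cL}_i^\vee\otimes p_2^*\widetilde{\cL}_i\to\cO_{\Delta}$ is just restriction to the diagonal followed by the pairing and needs no hypothesis on $Rf_*\cO_{\mathcal X}$; that hypothesis is used only so that Lemma~\ref{lem:3-18} applies on each fiber.
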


\begin{proof}
To apply \resultcite{Proposition~3.9}{Hu19}, we first establish relative exceptionality for the given extensions.  For a point $b\in B$, let
\[
 i_b\colon\operatorname{Spec}k(b)\longrightarrow B,
 \qquad X_b=\mathcal X\times_B\operatorname{Spec}k(b),
 \qquad \cL_{i,b}=\widetilde{\cL}_i|_{X_b}.
\]
Define the unit morphism $u\colon\cO_B\to Rf_*\cO_{\mathcal X}$ and the complexes
\begin{equation}\label{eq:3-41}
 K_0:=\Cone(u),
 \qquad
 K_{ji}:=Rf_*(\widetilde{\cL}_j^\vee\otimes\widetilde{\cL}_i)
 \qquad(1\le i<j\le \gamma).
\end{equation}
These complexes are perfect and satisfy arbitrary derived base change \resultcite{Tag~0DJT}{Stacks}.

The role of $K_0$ is to control the diagonal endomorphism complexes.  Since $\widetilde{\cL}_i$ is invertible,
\[
 R\mathcal Hom_{\mathcal X}(\widetilde{\cL}_i,\widetilde{\cL}_i)
 \simeq\widetilde{\cL}_i^\vee\otimes\widetilde{\cL}_i
 \simeq\cO_{\mathcal X}.
\]
Under this identification, $u$ is the relative identity morphism for every $\widetilde{\cL}_i$.  On a fiber, write $K_{0,b}=Li_b^*K_0$.  Derived base change gives the distinguished triangle
\begin{equation}\label{eq:3-42}
 k(b)\xrightarrow{\,1\mapsto\id_{\cL_{i,b}}\,}
 \RHom_{X_b}(\cL_{i,b},\cL_{i,b})
 \simeq R\Gamma(X_b,\cO_{X_b})
 \longrightarrow K_{0,b}\longrightarrow k(b)[1].
\end{equation}
Consequently,
\begin{equation}\label{eq:3-43}
 \begin{aligned}
 K_{0,b}=0
 &\quad\Longleftrightarrow\quad
 k(b)\xrightarrow{\sim}\RHom_{X_b}(\cL_{i,b},\cL_{i,b})\\
 &\quad\Longleftrightarrow\quad
 \cL_{i,b}\text{ is exceptional for every }i.
 \end{aligned}
\end{equation}
For a closed complex point $b$, connectedness and projectivity imply
$H^0(X_b,\cO_{X_b})=\mathbb C$, and the first arrow induces the identity on $H^0$.  Since $\dim X_b=2$, the cohomology sequence gives
\[
 H^v(K_{0,b})=
 \begin{cases}
 H^v(X_b,\cO_{X_b}),&v=1,2,\\
 0,&v\notin\{1,2\}.
 \end{cases}
\]
Thus $K_{0,b}=0$ is precisely the condition $q(X_b)=p_g(X_b)=0$.  This condition is independent of $i$.

The other complexes control reverse semiorthogonality:
\begin{equation}\label{eq:3-44}
 Li_b^*K_{ji}
 \simeq R\Gamma(X_b,\cL_{j,b}^\vee\otimes\cL_{i,b})
 \simeq\RHom_{X_b}(\cL_{j,b},\cL_{i,b})
 \qquad(j>i).
\end{equation}
The hypothesis at $b_0$ gives
\[
 Li_{b_0}^*K_0=0,
 \qquad Li_{b_0}^*K_{ji}=0\qquad(j>i).
\]
For a perfect complex $K$ on $B$, write
$\Supp K=\bigcup_v\Supp H^v(K)$.  This is closed, and derived Nakayama gives
\[
 b\notin\Supp K\quad\Longleftrightarrow\quad Li_b^*K=0;
\]
see \resultcite{Tag~0BCD}{Stacks}.  Hence
\begin{equation}\label{eq:3-45}
 U=B\setminus\left(\Supp K_0\cup\bigcup_{j>i}\Supp K_{ji}\right)
\end{equation}
is an open neighborhood of $b_0$.  On $\mathcal X_U\to U$ we have
\[
 \cO_U\xrightarrow{\sim}Rf_{U*}\cO_{\mathcal X_U},
 \qquad
 Rf_{U*}(\widetilde{\cL}_j^\vee\otimes\widetilde{\cL}_i)=0
 \qquad(j>i).
\]
These are precisely relative exceptionality for the restricted line bundles
\resultcite{Definition~2.6 and Lemma~2.7}{Hu19}.  Since they are full on $X_{b_0}$,
\resultcite{Proposition~3.9}{Hu19} now gives an open neighborhood $U'\subseteq U$ on which the same collection is full.

The open neighborhood $U'$ can be constructed from the projection kernel of Lemma~\ref{lem:3-18}; the proof in \resultcite{Theorem~3.5 and Proposition~3.9}{Hu19} uses helices.  Let $p_1,p_2$ and $\Delta$ be the projections and diagonal of $\mathcal X_U\times_U\mathcal X_U$.  With the line bundles restricted to $\mathcal X_U$, put
\[
 C_i=\Cone\left(p_1^*\widetilde{\cL}_i^\vee\otimes p_2^*\widetilde{\cL}_i
 \longrightarrow\cO_\Delta\right).
\]
Let $C$ be the convolution kernel of
$\Phi_{C_1}\circ\cdots\circ\Phi_{C_\gamma}$, with $\Phi_{C_\gamma}$ applied first.  The diagonal is regular, and convolution is a derived pullback--tensor--pushforward construction
\resultcite{Tag~0FYP, Lemma~57.8.3}{Stacks}.  Proper smooth pushforward and derived base change \resultcite{Tag~0DJT}{Stacks} give
\[
 C\in\Perf(\mathcal X_U\times_U\mathcal X_U),
 \qquad
 \Phi_{C_b}\simeq\Phi_{(C_1)_b}\circ\cdots\circ\Phi_{(C_\gamma)_b}.
\]
Here $C_b$ is the derived restriction to $X_b\times X_b$.
Lemma~\ref{lem:3-18} and fullness at $b_0$ imply $C_{b_0}=0$.  For the proper structure morphism $q\colon\mathcal X_U\times_U\mathcal X_U\to U$, take
\[
 U'=U\setminus q(\Supp C).
\]
Properness makes $U'$ open.  Derived Nakayama applied along the fiber over $b_0$ gives $b_0\in U'$.  For every $b\in U'$,
\[
 C_b=0\quad\Longrightarrow\quad
 \Db(X_b)=\langle\cL_{1,b},\ldots,\cL_{\gamma,b}\rangle,
\]
by Lemma~\ref{lem:3-18}.
\end{proof}

\begin{lemma}[cf.\ \resultcite{Propositions~1.4--1.5 and \S2}{Roe01}]\label{lem:3-20}
Fix a smooth projective connected surface $S$ and $r\ge0$.  There exist a smooth irreducible projective variety $B_r$, a smooth projective family $f_r\colon\mathcal X_r\to B_r$, and $g_r\colon\mathcal X_r\to S$ such that:
\begin{enumerate}[label=\textup{(\roman*)}]
 \item Every ordered tower of $r$ point blowups over $S$, including infinitely near centers, occurs as a fiber.
 \item The locus of distinct images of the centers is the nonempty open set
 \[
 \Conf_r(S)=S^r\setminus\bigcup_{a<b}\{p_a=p_b\}.
 \]
 \item Relative line bundles restrict to the total transforms of all successive exceptional divisors.
 \item The restriction $\Pic(\mathcal X_r)\to\Pic((\mathcal X_r)_b)$ is surjective for every closed point $b\in B_r(\mathbb C)$.
\end{enumerate}
For $r=0$, one has $B_0=\operatorname{Spec}\mathbb C$ and $\mathcal X_0=S$.
\end{lemma}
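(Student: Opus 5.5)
We build $B_r$ and $\mathcal X_r$ inductively, following Roé's iterated-blowup construction. Set $B_0=\operatorname{Spec}\mathbb C$, $\mathcal X_0=S$, and $g_0=\id$. Given $f_r\colon\mathcal X_r\to B_r$, define
\[
 B_{r+1}:=\mathcal X_r.
\]
Over it, consider the fibre product $\mathcal X_r\times_{B_r}\mathcal X_r\to\mathcal X_r$, taken via the second projection. This family has a tautological section, namely the diagonal $\Delta_r$. Let $\mathcal X_{r+1}$ be the blowup of $\mathcal X_r\times_{B_r}\mathcal X_r$ along $\Delta_r$. Set $f_{r+1}$ to be the induced map to $B_{r+1}=\mathcal X_r$, and let $g_{r+1}$ be the composite
\[
 \mathcal X_{r+1}\longrightarrow\mathcal X_r\times_{B_r}\mathcal X_r
 \xrightarrow{\mathrm{pr}_1}\mathcal X_r\xrightarrow{g_r}S .
\]

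Smoothness comes from two observations. First, $f_r$ is smooth of relative dimension two, so the fibre product is smooth over $B_{r+1}$. Second, $\Delta_r$ is a section of that smooth morphism, hence a regular embedding that is smooth over the base. Blowing up such a section commutes with base change. Therefore $f_{r+1}$ is smooth and projective, and its fibre over a point $x\in X_b$ is $\operatorname{Bl}_xX_b$. Irreducibility, smoothness and projectivity of $B_{r+1}$ follow by induction, since $B_{r+1}$ is the total space $\mathcal X_r$.

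Properties (i) and (ii) follow by induction on $r$. An ordered tower over $S$ consists of a tower of $r$ blowups, represented by a fibre $X_b$, together with an arbitrary point $x\in X_b$. The point $x$ may lie on an exceptional curve, which is exactly the infinitely near case. Such a tower is the fibre of $\mathcal X_{r+1}$ over $x\in B_{r+1}$. The configurations whose centres have distinct images in $S$ form an open subset of $B_r$. On this subset, the map $B_r\to S^r$ recording the images of the centres is an isomorphism onto $\Conf_r(S)$. The reason is that the projection $B_{r+1}\to B_r\times S$ is an isomorphism away from the preimages of the earlier exceptional loci. This identification is the bookkeeping step that requires the most care.

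For (iii), let $\mathcal E_{r+1}\subset\mathcal X_{r+1}$ be the exceptional divisor. It is relative Cartier, and on each fibre it restricts to the new exceptional curve. The earlier exceptional divisors $\mathcal E_1,\ldots,\mathcal E_r$ are defined on $\mathcal X_r$. We pull them back along $\mathrm{pr}_1$ and then along the blowup map. On each fibre this pullback is the total transform, because pullback commutes with restriction to fibres.

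For (iv), recall that $\Pic(\operatorname{Bl}X)\simeq\Pic(X)\oplus\mathbb Z E$. By induction, $\Pic(X_b)$ is generated by $g^*\Pic(S)$ together with the classes of the total transforms of the exceptional curves. All of these classes are restrictions of the global line bundles $g_r^*\mathcal M$ and $\cO(\mathcal E_k)$, so the restriction map is surjective.

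The main obstacle is verifying the infinitely near case uniformly in (i) and the identification in (ii). Both reduce to the base-change compatibility of blowing up a section of a smooth morphism, which is the content of Roé's Propositions~1.4--1.5.
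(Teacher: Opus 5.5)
Your proposal is correct and is essentially the paper's construction. The paper likewise sets $B_{r+1}=\mathcal X_r$, blows up $\mathcal X_r\times_{B_r}\mathcal X_r$ along the diagonal section, and pulls the earlier exceptional divisors back along $\mathrm{pr}_1$ and the blowup map; it proves (ii) by the same unique-lift argument for centers avoiding earlier exceptional loci, and (iv) by the same decomposition $\Pic(\operatorname{Bl}X)\simeq\Pic(X)\oplus\mathbb Z E$.
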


\begin{proof}
We recall the ordered-cluster construction and its fiber identifications from \resultcite{Propositions~1.4--1.5 and their proofs}{Roe01}, in the notation needed here.  The relative exceptional divisors are those constructed in \resultcite{\S2, before Remark~2.1}{Roe01}.  Part~(iv) follows from this construction and the Picard decomposition in \resultcite{\S1, before Lemma~1.2}{Roe01}.

Starting with $(B_0,\mathcal X_0)=(\operatorname{Spec}\mathbb C,S)$, define
\begin{align*}
 B_r&=\mathcal X_{r-1},
 &\widetilde{\mathcal X}_{r-1}&=\mathcal X_{r-1}\times_{B_{r-1}}B_r,\\
 \sigma_r&\colon B_r\longrightarrow\widetilde{\mathcal X}_{r-1},
 &\sigma_r(b)&=(b,b),\\
 \beta_r&\colon\mathcal X_r=\operatorname{Bl}_{\sigma_r(B_r)}\widetilde{\mathcal X}_{r-1}
 \longrightarrow\widetilde{\mathcal X}_{r-1},\\
 f_r&=\operatorname{pr}_2\circ\beta_r,
 &g_r&=g_{r-1}\circ\operatorname{pr}_1\circ\beta_r.
\end{align*}
The section has relative codimension two.  In relative local coordinates,
$\operatorname{Bl}_0\mathbb A^2_{B_r}$ has smooth charts
\[
 (u,w)\longmapsto(u,uw),
 \qquad(z,v)\longmapsto(zv,v).
\]
Thus $f_r$ is smooth and projective, and formation commutes with fibers:
\[
 (\mathcal X_r)_b\simeq
 \operatorname{Bl}_b\bigl((\mathcal X_{r-1})_{f_{r-1}(b)}\bigr).
\]
Induction gives smoothness and irreducibility of $B_r,\mathcal X_r$ and proves~(i).

The images of the centers define a morphism $\eta_r\colon B_r\to S^r$.  Put $\Delta_{ab}=\{p_a=p_b\}$.  Each later center has a unique lift when its image avoids the previous ones, so
\[
 \eta_r^{-1}\left(S^r\setminus\bigcup_{a<b}\Delta_{ab}\right)
 \xrightarrow{\sim}\Conf_r(S).
\]
This proves~(ii).

Let $\mathcal E_r$ be the last relative exceptional divisor.  Define
\[
 \mathcal H_{r,r}=\cO_{\mathcal X_r}(\mathcal E_r),
 \qquad
 \mathcal H_{a,r}=\beta_r^*\operatorname{pr}_1^*\mathcal H_{a,r-1}
 \qquad(a<r).
\]
Then $\mathcal H_{a,r}|_{(\mathcal X_r)_b}\simeq\cO(E_a^{\mathrm{tot}})$, proving~(iii).  On a closed fiber $\pi\colon X\to S$, the Picard group decomposition gives
\begin{equation}\label{eq:3-46}
 \cL\simeq\pi^*\cM\otimes\cO_X\left(\sum_{a=1}^r m_aE_a^{\mathrm{tot}}\right),
 \qquad\cM\in\Pic(S),\quad m_a\in\mathbb Z.
\end{equation}
The extension
\[
 \widetilde{\cL}=g_r^*\cM\otimes\bigotimes_{a=1}^r\mathcal H_{a,r}^{\otimes m_a},
 \qquad \widetilde{\cL}|_X\simeq\cL,
\]
proves~(iv).
\end{proof}

\begin{lemma}[\resultcite{27(1)}{Kol10}]\label{lem:3-21}
A non-uniruled smooth projective complex surface $S$ contains at most countably many integral rational curves.
\end{lemma}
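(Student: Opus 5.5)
The plan is to parametrize integral rational curves on $S$ by countably many algebraic families and then use non-uniruledness to show that each family is zero-dimensional, hence finite. Every integral rational curve $C\subset S$ is the image of a nonconstant morphism $\mathbb P^1\to S$, namely its normalization composed with the inclusion. Fix an ample divisor $H$ on $S$. The morphisms $\mathbb P^1\to S$ of degree $d$ with respect to $H$ are parametrized by a quasi-projective scheme $\operatorname{Mor}_d(\mathbb P^1,S)$, an open subscheme of the Hilbert scheme of graphs in $\mathbb P^1\times S$. Each such scheme has finitely many irreducible components, so the set of all components $M$ over all $d\ge1$ is countable.

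Next we show that each component $M$ contributes only finitely many image curves. Consider the evaluation map
\[
 \operatorname{ev}\colon\mathbb P^1\times M\longrightarrow S.
\]
Suppose that the images $\phi(\mathbb P^1)$, $[\phi]\in M$, were not finitely many curves. Then the images would sweep out a dense subset of $S$, so $\operatorname{ev}$ would be dominant. A dominant map $\mathbb P^1\times M\to S$ makes $S$ uniruled: restrict to a subvariety $\mathbb P^1\times M'$ with $\dim M'=1$ that still dominates $S$. This contradicts the hypothesis. Hence $\operatorname{ev}(\mathbb P^1\times M)$ is a proper closed subset, of dimension at most one. Its one-dimensional irreducible components are finitely many curves, and every curve $\phi(\mathbb P^1)$ with $[\phi]\in M$ is one of them.

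Taking the union over the countably many components $M$ then gives at most countably many integral rational curves. The step that needs the most care is the passage from dominance of $\operatorname{ev}$ to uniruledness. I would take the definition of uniruledness to be the existence of a dominant rational map $\mathbb P^1\times Z\dashrightarrow S$ with $\dim Z=1$, and cut $M$ down by general hyperplane sections to reach such a $Z$. Over $\mathbb C$, the countability of the components of the Hom schemes is standard. Alternatively, the whole statement is exactly \resultcite{27(1)}{Kol10}, and we may simply cite it.
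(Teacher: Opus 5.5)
Your proposal is correct and follows essentially the same route as the paper: countably many irreducible components of the degree-$d$ Hom schemes, non-dominance of each evaluation map forced by non-uniruledness, and hence only finitely many image curves per component (in fact exactly one, since $\mathbb P^1\times M$ is irreducible). The only variation is in cutting a dominant family down to a one-parameter family. The paper uses generic smoothness and a curve through a point transverse to the $\mathbb P^1$-direction; your iterated general hyperplane sections also work, because for general $s\in S$ the locus of maps through $s$ has dimension $\dim M-1$, so a general hyperplane still meets it.
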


\begin{proof}
We give the surface specialization of the parameter-space argument in \resultcite{Theorem~25 and 27(1)}{Kol10}, including the reduction of a dominant evaluation map to a family over a curve.  Fix a projective embedding and set
\[
 \mathcal H_d=\operatorname{Hom}_d(\mathbf P^1,S),\qquad d\ge1.
\]
Each $\mathcal H_d$ is of finite type: it is defined by the equations of $S$ in the open parameter space of tuples of degree-$d$ homogeneous polynomials without a common zero.  Hence
\[
 \mathcal T=\bigcup_{d\ge1}\operatorname{Irr}(\mathcal H_d)
\]
is countable.

For $T\in\mathcal T$, with its reduced structure, consider
\[
 \operatorname{ev}_T\colon\mathbf P^1\times T\longrightarrow S,
 \qquad Z_T=\overline{\operatorname{im}(\operatorname{ev}_T)}.
\]
If $Z_T=S$, generic smoothness supplies $(z,t)\in\mathbf P^1\times T_{\mathrm{sm}}$ with
\[
 \operatorname{rank}d\operatorname{ev}_{T,(z,t)}=2,
 \qquad
 d\operatorname{ev}_{T,(z,t)}|_{T_z\mathbf P^1}\ne0.
\]
Choose a curve $D\subset T$ through $t$ whose tangent direction complements the image of $T_z\mathbf P^1$.  Then
\[
 \operatorname{rank}d(\operatorname{ev}_T|_{\mathbf P^1\times D})_{(z,t)}=2,
\]
so $\mathbf P^1\times D\to S$ is dominant.  Completing and normalizing $D$ gives a dominant rational map $\mathbf P^1\times\overline D\dashrightarrow S$, contradicting non-uniruledness.  Therefore $\dim Z_T=1$; all maps in $T$ have the same integral rational image $Z_T$.  Every integral rational curve is the image of its normalization, so the countable family $(Z_T)_{T\in\mathcal T}$ contains all of them.
\end{proof}

\begin{lemma}[cf.\ \resultcite{Tag~0CQN}{Stacks}]\label{lem:3-22}
Let $S$ be as in Lemma~\ref{lem:3-21}.  For $r>0$, every nonempty Zariski-open $U\subset\Conf_r(S)$ contains a tuple $(p_1,\ldots,p_r)$ such that no $p_a$ lies on an integral rational curve of $S$.
\end{lemma}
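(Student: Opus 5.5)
The plan is a Baire-category argument over the uncountable field $\mathbb C$, combined with the countability in Lemma~\ref{lem:3-21}. By that lemma, the integral rational curves of $S$ form a countable family $(Z_n)_{n\ge1}$. For each pair $(a,n)$ with $1\le a\le r$, put
\[
 \Sigma_{a,n}=\{(p_1,\ldots,p_r)\in\Conf_r(S): p_a\in Z_n\}.
\]
This is the intersection of $\Conf_r(S)$ with $\operatorname{pr}_a^{-1}(Z_n)$. It is a proper Zariski-closed subset of the irreducible variety $\Conf_r(S)$, of dimension $2r-1$, because $Z_n$ is a curve in the surface $S$. The required tuples are exactly the points of
\[
 U\setminus\bigcup_{a,n}\Sigma_{a,n}.
\]
So it suffices to show that a nonempty open subset of an irreducible complex variety is not a countable union of proper closed subsets.

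To prove this, note that $U$ is irreducible of dimension $2r$ and each $\Sigma_{a,n}\cap U$ is a proper closed subset. There are two routes, and I would use the first.

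The first route works in the analytic topology. $U(\mathbb C)$ is a locally compact Hausdorff space, hence a Baire space. Each $\Sigma_{a,n}\cap U$ is analytically closed with empty interior: a proper algebraic subset of an irreducible variety contains no nonempty analytic open set, since otherwise it would contain a Zariski-dense subset. Baire's theorem then says the complement of the countable union is nonempty, indeed dense.

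The second route is purely algebraic, in the spirit of \resultcite{Tag~0CQN}{Stacks}. Induct on dimension by projecting $U$ to a coordinate and using that $\mathbb C$ is uncountable. One chooses a value $c$ that is not among the countably many bad values of the coordinate, namely those over which some $\Sigma_{a,n}$ contains the entire fiber, and then recurses on the fiber.

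I expect no real obstacle. The only points to check are that each $\Sigma_{a,n}$ is proper, which holds because $p_a$ ranges over all of $S$ in $\Conf_r(S)$ and $Z_n\neq S$, and that countably many bad sets suffice. The latter holds because there are only $r$ choices of $a$ and countably many $n$.

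If one wants to avoid analytic topology, the cited Stacks tag gives directly that a variety over an uncountable algebraically closed field is not a countable union of proper closed subsets. One then applies it to the irreducible variety $U$.
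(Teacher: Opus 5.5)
Your first route is correct and is essentially the paper's proof: it uses the countability from Lemma~\ref{lem:3-21}, the fact that each $U\cap\operatorname{pr}_a^{-1}(C_n)$ is a proper closed subset, and the Baire category theorem in the analytic topology (the paper applies it on a small ball $\Omega\subset U$ rather than on all of $U(\mathbb C)$, which makes no difference). The algebraic alternatives are unnecessary; note also that the paper cites Tag~0CQN for the Baire category theorem itself, not for a statement about uncountable fields.
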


\begin{proof}
Enumerate the rational curves as $C_n$, with the finite or empty cases included.  For each coordinate projection $\operatorname{pr}_a\colon S^r\to S$, the subset
\[
 Z_{n,a}=U\cap\operatorname{pr}_a^{-1}(C_n)
\]
is proper and closed in $U$.  Choose a nonempty analytic open ball $\Omega\subset U$.  Each $Z_{n,a}\cap\Omega$ is closed and nowhere dense in $\Omega$.  Applying the Baire category theorem \resultcite{Tag~0CQN}{Stacks} to the locally compact Hausdorff space $\Omega$ gives
\[
 \Omega\setminus\bigcup_{n,a}Z_{n,a}\ne\varnothing.
\]
Every point of this complement has the required property.
\end{proof}

\begin{corollary}\label{cor:3-23}
Let $S$ be a non-uniruled smooth connected projective complex surface, and let $X\to S$ be a tower of $r\ge0$ point blowups.  If $X$ admits a full exceptional collection of line bundles, then there exist pairwise distinct points $p_1,\ldots,p_r\in S$, none lying on any integral rational curve of $S$, such that
$Y=\operatorname{Bl}_{p_1,\ldots,p_r}S$ admits a full exceptional collection of line bundles.  For $r=0$, take $Y=S$.
\end{corollary}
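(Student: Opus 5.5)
The plan combines Lemma~\ref{lem:3-20}, Proposition~\ref{prop:3-19} and Lemma~\ref{lem:3-22}. The case $r=0$ is trivial, so assume $r>0$. Let $f_r\colon\mathcal X_r\to B_r$ be the family of Lemma~\ref{lem:3-20}. By part~(i), the given tower $X\to S$ is the fiber $(\mathcal X_r)_{b_0}$ over some closed point $b_0\in B_r$. Let $(\cL_1,\ldots,\cL_\gamma)$ be the full exceptional collection of line bundles on $X$. By part~(iv), each $\cL_i$ extends to a line bundle $\widetilde\cL_i$ on $\mathcal X_r$. Concretely, we write $\cL_i$ as in \eqref{eq:3-46} and take $g_r^*\cM_i\otimes\bigotimes_a\mathcal H_{a,r}^{\otimes m_{i,a}}$.

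Since $B_r$ is smooth and $f_r$ is a smooth projective family of connected surfaces, Proposition~\ref{prop:3-19} gives an open neighborhood $U'\ni b_0$. Over every $b\in U'$ the restrictions $\widetilde\cL_{i}|_{X_b}$ form a full exceptional collection of line bundles.

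Next we intersect with the distinct-center locus. By Lemma~\ref{lem:3-20}(ii), the morphism $\eta_r\colon B_r\to S^r$ restricts to an isomorphism from the open set $B_r^\circ:=\eta_r^{-1}(\Conf_r(S))$ onto $\Conf_r(S)$. Here $B_r^\circ$ is nonempty because $\Conf_r(S)$ is nonempty. Since $B_r$ is irreducible, the two nonempty opens $U'$ and $B_r^\circ$ meet. Hence $U:=\eta_r(U'\cap B_r^\circ)$ is a nonempty Zariski-open subset of $\Conf_r(S)$.

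Lemma~\ref{lem:3-22} applies because $S$ is non-uniruled; it uses Lemma~\ref{lem:3-21}. It gives $(p_1,\ldots,p_r)\in U$ with no $p_a$ on an integral rational curve of $S$. Let $b\in U'\cap B_r^\circ$ be its preimage. The points are distinct, so the fiber is $\operatorname{Bl}_{p_1,\ldots,p_r}S$. Distinct centers give a blowup independent of order and not infinitely near, by the fiber description $(\mathcal X_r)_b\simeq\operatorname{Bl}_b((\mathcal X_{r-1})_{f_{r-1}(b)})$ in the proof of Lemma~\ref{lem:3-20}. Since $b\in U'$, this $Y$ carries a full exceptional collection of line bundles.

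The only point needing care is the identification of the fiber over a point of $B_r^\circ$ with the blowup at the distinct image points. This requires that each successive center, lying over a point not previously blown up, is the unique lift of $p_a$. This is exactly the content of Lemma~\ref{lem:3-20}(ii). Everything else is a direct assembly of the cited results.
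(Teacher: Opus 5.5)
Your proposal is correct and follows essentially the same route as the paper. Both proofs extend the line bundles over the irreducible ordered-cluster family of Lemma~\ref{lem:3-20}, take the open fullness neighborhood from Proposition~\ref{prop:3-19}, intersect it with the open distinct-center locus $\Conf_r(S)$ using irreducibility of $B_r$, and then choose an avoiding configuration in the resulting nonempty open subset by Lemma~\ref{lem:3-22}.
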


\begin{proof}
The assertion for $r=0$ is immediate.  Suppose $r>0$, and denote the given collection by
$\cE^0=(\cL_1^0,\ldots,\cL_\gamma^0)$.  By Lemma~\ref{lem:3-20}, there are a smooth projective family $f_r\colon\mathcal X_r\to B_r$ with $B_r$ irreducible, a point $b_0\in B_r$, and line bundles $\widetilde{\cL}_i$ on $\mathcal X_r$ satisfying
\[
 (\mathcal X_r)_{b_0}\simeq X,
 \qquad\widetilde{\cL}_i|_{(\mathcal X_r)_{b_0}}\simeq\cL_i^0
 \qquad(1\le i\le \gamma).
\]
For $b\in B_r(\mathbb C)$, put
\[
 X_b=(\mathcal X_r)_b,
 \qquad\cL_{i,b}=\widetilde{\cL}_i|_{X_b},
 \qquad\cE_b=(\cL_{1,b},\ldots,\cL_{\gamma,b}).
\]
Proposition~\ref{prop:3-19} gives a Zariski-open neighborhood $V\subseteq B_r$ of $b_0$ such that $\cE_b$ is full and exceptional for every $b\in V(\mathbb C)$.

The distinct-center locus of Lemma~\ref{lem:3-20}(ii) is the image of an open immersion
\[
 j_r\colon\Conf_r(S)\hookrightarrow B_r,
 \qquad\Omega_r:=j_r(\Conf_r(S)),
 \qquad X_{j_r(p)}\simeq\operatorname{Bl}_{p_1,\ldots,p_r}S,
\]
where $p=(p_1,\ldots,p_r)\in\Conf_r(S)(\mathbb C)$.  Define
\[
 W:=j_r^{-1}(V)\subseteq\Conf_r(S).
\]
Both $V$ and $\Omega_r$ are nonempty open subsets of the irreducible variety $B_r$.  Hence
\[
 \varnothing\ne V\cap\Omega_r=j_r(W),
\]
so $W$ is a nonempty Zariski-open subset of $\Conf_r(S)$.  For every $p\in W(\mathbb C)$, the collection $\cE_{j_r(p)}$ is therefore full and exceptional on $\operatorname{Bl}_{p_1,\ldots,p_r}S$.

It remains to choose a point of $W$ satisfying the avoidance condition.  Let $\mathcal R(S)$ be the set of integral rational curves on $S$, which is at most countable by Lemma~\ref{lem:3-21}.  For $1\le a\le r$, let $\operatorname{pr}_a\colon\Conf_r(S)\to S$ be the coordinate projection, and define
\[
 Z_{C,a}:=W\cap\operatorname{pr}_a^{-1}(C)
 \qquad(C\in\mathcal R(S)).
\]
These are proper closed subsets of $W$: each is the intersection of the nonempty open subset $W\subset S^r$ with the proper closed subset $\{p\in S^r:p_a\in C\}$.  Apply Lemma~\ref{lem:3-22} with its open set $U$ equal to $W$.  It gives
\[
 p\in W(\mathbb C)\setminus
 \bigcup_{C\in\mathcal R(S)}\bigcup_{a=1}^rZ_{C,a}(\mathbb C).
\]
For this chosen tuple, put $b=j_r(p)$ and $Y=X_b\simeq\operatorname{Bl}_{p_1,\ldots,p_r}S$.  Then
\begin{align*}
 p\in\Conf_r(S)&\quad\Longrightarrow\quad p_a\ne p_{a'}\qquad(a\ne a'),\\
 p\notin\bigcup_{C,a}Z_{C,a}
 &\quad\Longrightarrow\quad p_a\notin C
 \qquad(1\le a\le r,\ C\in\mathcal R(S)),\\
 p\in W(\mathbb C)&\quad\Longrightarrow\quad b\in V(\mathbb C).
\end{align*}
The last inclusion makes $\cE_b$ a full exceptional collection of line bundles on $Y$, as required.
\end{proof}

\subsection{Finding an exceptional curve in the collection}\label{subsec:exceptional-pair}

Let $p_g(S)=q(S)=0$, let $K_S$ be nef, and let
\[
 \pi\colon Y=\operatorname{Bl}_{p_1,\ldots,p_r}S\longrightarrow S
\]
be as in Corollary~\ref{cor:3-23}.  The exceptional curves satisfy
\begin{equation}\label{eq:3-47}
 E_a^2=-1,\qquad E_aE_b=0\quad(a\ne b),
 \qquad K_Y=\pi^*K_S+\sum_aE_a.
\end{equation}
The choice of centers separates every rational curve on $Y$ from the exceptional locus unless it is itself exceptional.  Applied to the zero divisors of forward morphisms, this observation has two consequences.  Proposition~\ref{prop:3-16} forces a pair of line bundles differing by an exceptional curve, and the cohomology of that curve then produces such a pair in adjacent positions.  The latter step is needed to perform the mutations used in the contraction argument.

\begin{lemma}\label{lem:3-24}
Every effective left-orthogonal divisor $D>0$ on $Y$ is either a single exceptional curve $E_a$ or the pullback of an effective left-orthogonal divisor $D_0>0$ on $S$.  In the latter case,
\[
 \Supp D\cap\bigcup_aE_a=\varnothing,
 \qquad\chi(\cO_Y(D))=-K_SD_0\le0.
\]
\end{lemma}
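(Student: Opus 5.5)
Lemma~\ref{lem:3-1} reduces everything to the prime components of $D$: each is a smooth rational curve $C$ with $C^2=-2-K_YC$, and $\Supp D$ is connected. We therefore first classify the integral rational curves on $Y$.

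\textbf{Step 1: rational curves on $Y$.} Let $C\subset Y$ be an integral rational curve that is not one of the $E_a$. Then $\pi(C)$ is an integral rational curve on $S$, so by the choice of centers in Corollary~\ref{cor:3-23} it contains no $p_a$. Hence $C$ is disjoint from every $E_a$ and $C=\pi^*\pi(C)$. Using \eqref{eq:3-47}, this gives $K_YC=K_S\pi(C)\ge0$, since $K_S$ is nef. On the other hand, an exceptional component satisfies $K_YE_a=-1$ and $E_a^2=-1$. Consequently every prime component of $D$ is either some $E_a$ or a curve disjoint from $\bigcup_aE_a$.

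\textbf{Step 2: splitting the support.} If $D$ has a component disjoint from the exceptional locus, then no $E_a$ can be a component. Indeed, the $E_a$ are pairwise disjoint and disjoint from the non-exceptional components, so connectedness of $\Supp D$ (Lemma~\ref{lem:3-1}) forces either $\Supp D=E_a$ for a single $a$ or $\Supp D\cap\bigcup E_a=\varnothing$.

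In the first case write $D=mE_a$ with $m\ge1$. Applying \eqref{eq:2-3} gives
\[
 D^2+K_YD=-m^2+(-m)=-2,
\]
so $m=1$ and $D=E_a$.

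In the second case $D$ avoids the exceptional locus. Since $\pi$ is an isomorphism off $\bigcup E_a$, we can write $D=\pi^*D_0$ with $D_0>0$ effective and its support avoiding every $p_a$. For left-orthogonality of $D_0$ we use $R\pi_*\cO_Y=\cO_S$ and the projection formula:
\[
 H^q(S,\cO_S(-D_0))=H^q(Y,\pi^*\cO_S(-D_0))=H^q(Y,\cO_Y(-D))=0.
\]
For the Euler characteristic, \eqref{eq:2-3} gives $\chi(\cO_Y(D))=-K_YD$. Since $E_a\cdot\pi^*D_0=0$, this equals $-K_S D_0$. Finally, $-K_SD_0\le0$ because $K_S$ is nef and $D_0$ is effective.

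\textbf{Main obstacle.} The delicate point is Step~1: verifying that the avoidance property from Corollary~\ref{cor:3-23} really gives disjointness of each non-exceptional rational curve from the $E_a$. We must check that the image $\pi(C)$ of a rational curve is an integral rational curve of $S$, and not a point. This holds because only the $E_a$ are contracted, and the centers are distinct, so no infinitely near points occur. Once this is settled, everything else is bookkeeping with the numerics in \eqref{eq:2-3} and \eqref{eq:3-47}.
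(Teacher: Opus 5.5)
Your proposal is correct and follows essentially the same route as the paper: Lemma~\ref{lem:3-1} together with the avoidance property of the centers makes every noncontracted component a pullback disjoint from the exceptional locus. Connectedness then splits into the two cases, \eqref{eq:2-3} forces $m=1$ for $D=mE_a$, and $R\pi_*\cO_Y\simeq\cO_S$ with the projection formula gives left-orthogonality of $D_0$. The only cosmetic difference is that you compute $\chi(\cO_Y(D))=-K_YD$ on $Y$ and then use $E_a\cdot\pi^*D_0=0$, whereas the paper first passes to $\chi(\cO_S(D_0))$ and applies \eqref{eq:2-3} on $S$.
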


\begin{proof}
By Lemma~\ref{lem:3-1}, $\Supp D$ is connected and its prime components are smooth rational curves.  For a noncontracted component $C$, its image $C_0=\pi(C)$ is rational and avoids all $p_a$.  Thus
\[
 C=\pi^*C_0,
 \qquad C\cap\bigcup_aE_a=\varnothing.
\]
Connectedness yields the alternatives
\[
 D=mE_a\quad(m\ge1),
 \qquad\text{or}\qquad
 D=\pi^*D_0,\quad\Supp D\cap\bigcup_aE_a=\varnothing.
\]
In the first case, \eqref{eq:2-3} gives
\[
 -m^2-m=D^2+K_YD=-2\quad\Longrightarrow\quad m=1.
\]
In the second, $R\pi_*\cO_Y\simeq\cO_S$ and the projection formula give
\[
 R\Gamma(S,\cO_S(-D_0))\simeq R\Gamma(Y,\cO_Y(-D))=0,
\]
so $D_0$ is left-orthogonal.  Finally,
\[
 \chi(\cO_Y(D))=\chi(\cO_S(D_0))=-K_SD_0\le0.
 \qedhere
\]
\end{proof}

\begin{corollary}\label{cor:3-25}
If $(\cL,\cM)$ is an exceptional pair of line bundles on $Y$ and $\RHom(\cL,\cM)$ is nonzero and concentrated in degree zero, then
\[
 \cM\simeq\cL(E_a)\text{ for some }a,
 \qquad\RHom(\cL,\cM)\simeq\mathbb C.
\]
Conversely, every pair $(\cL,\cL(E_a))$ is exceptional with this forward morphism complex.
\end{corollary}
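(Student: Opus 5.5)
Write $\cM\simeq\cL(D)$ with $D$ the zero divisor of a nonzero element of $\Hom(\cL,\cM)$. Since $(\cL,\cM)$ is exceptional, $\RHom(\cM,\cL)=R\Gamma(Y,\cO_Y(-D))=0$, so the class of $D$ is left-orthogonal. As noted in Subsection~\ref{subsec:left-orthogonal}, $D>0$ because $\cL\not\simeq\cM$. Lemma~\ref{lem:3-24} then leaves two cases: either $D=E_a$ for some $a$, or $D=\pi^*D_0$ with $D_0>0$ effective and left-orthogonal on $S$.

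The second case should be excluded by an Euler characteristic count. The hypothesis that $\RHom(\cL,\cM)=R\Gamma(Y,\cO_Y(D))$ is nonzero and concentrated in degree zero gives
\[
 \chi(\cO_Y(D))=h^0(Y,\cO_Y(D))\ge1.
\]
Lemma~\ref{lem:3-24}, however, gives $\chi(\cO_Y(D))=-K_SD_0\le0$ by nefness of $K_S$. Hence $D=E_a$ and $\cM\simeq\cL(E_a)$. It remains to compute $\RHom(\cL,\cM)$, which amounts to computing $R\Gamma(Y,\cO_Y(E_a))$.

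For the computation and the converse I use the sequences
\[
0\to\cO_Y\to\cO_Y(E_a)\to\cO_{E_a}(E_a)\to0,\qquad
0\to\cO_Y(-E_a)\to\cO_Y\to\cO_{E_a}\to0 .
\]
Since $E_a\simeq\mathbb P^1$, we have $\cO_{E_a}(E_a)\simeq\cO_{\mathbb P^1}(-1)$, which is acyclic. Together with $p_g(Y)=q(Y)=0$, the first sequence gives $R\Gamma(\cO_Y(E_a))\simeq R\Gamma(\cO_Y)=\mathbb C$ in degree zero. The second sequence, with $R\Gamma(\cO_Y)=\mathbb C\to R\Gamma(\cO_{E_a})=\mathbb C$ an isomorphism on constants, yields $R\Gamma(\cO_Y(-E_a))=0$. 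This is the semiorthogonality $\RHom(\cL(E_a),\cL)=0$. Each line bundle is exceptional because $p_g(Y)=q(Y)=0$; this follows since $Y$ is a blowup of $S$ and these invariants are birational, and since $Y$ carries a full exceptional collection by Corollary~\ref{cor:3-23}. Hence $(\cL,\cL(E_a))$ is an exceptional pair with $\RHom\simeq\mathbb C$ in degree zero.

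The only delicate point is the case exclusion, which depends on the sign $\chi\le0$ from Lemma~\ref{lem:3-24}. That sign in turn depends on the choice of centers avoiding rational curves and on $K_S$ being nef. Everything else is routine cohomology on $\mathbb P^1$.
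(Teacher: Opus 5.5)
Your proposal is correct and follows the paper's proof essentially step for step. You exclude the pullback alternative of Lemma~\ref{lem:3-24} by the same contradiction $0<h^0(Y,\cO_Y(D))=\chi(\cO_Y(D))\le0$, and you compute the forward complex and prove the converse from the same two restriction sequences for $\cO_Y(\pm E_a)$ together with $R\Gamma(\cO_Y)\simeq\mathbb C$.
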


\begin{proof}
A nonzero morphism has an effective left-orthogonal divisor $D>0$.  The pullback alternative of Lemma~\ref{lem:3-24} would imply
\[
 0<h^0(Y,\cO_Y(D))=\chi(\cO_Y(D))\le0.
\]
Hence $D=E_a$.  Conversely, the sequences
\begin{gather*}
 0\longrightarrow\cO_Y(-E_a)\longrightarrow\cO_Y\longrightarrow\cO_{E_a}\longrightarrow0,\\
 0\longrightarrow\cO_Y\longrightarrow\cO_Y(E_a)\longrightarrow\cO_{E_a}(-1)\longrightarrow0
\end{gather*}
and $R\Gamma(\cO_Y)\simeq\mathbb C$ give
\[
 R\Gamma(\cO_Y(-E_a))=0,
 \qquad R\Gamma(\cO_Y(E_a))\simeq\mathbb C.
\]
Tensoring by $\cL$ proves the assertion.
\end{proof}

\begin{lemma}[cf.\ \resultcite{Tag~0AA4}{Stacks}]\label{lem:3-26}
Let $\iota\colon E\hookrightarrow Y$ be a $(-1)$-curve on a smooth surface, and regard $\cO_E(d)$ as a sheaf on $Y$.  For every line bundle $\cM$,
\begin{align}
 \RHom_Y(\cM,\cO_E(d))
 &\simeq R\Gamma(\mathbf P^1,\cO(d-\cM\cdot E)),\label{eq:3-48}\\
 \RHom_Y(\cO_E(d),\cM)
 &\simeq R\Gamma(\mathbf P^1,\cO(\cM\cdot E-d-1))[-1].\label{eq:3-49}
\end{align}
In particular,
\begin{align*}
 \RHom_Y(\cO_E(d),\cM)=0
 &\quad\Longleftrightarrow\quad\cM\cdot E=d,\\
 \RHom_Y(\cM,\cO_E(d))=0
 &\quad\Longleftrightarrow\quad\cM\cdot E=d+1.
\end{align*}
\end{lemma}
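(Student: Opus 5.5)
For \eqref{eq:3-48}, I would use adjunction for the closed immersion $\iota$. Since $\cM$ is locally free,
\[
 \RHom_Y(\cM,\iota_*\cO_E(d))\simeq R\Gamma(Y,\cM^\vee\otimes\iota_*\cO_E(d))\simeq R\Gamma(E,\cO_E(d)\otimes\iota^*\cM^\vee),
\]
by the projection formula. Identifying $E\simeq\mathbf P^1$ gives $\iota^*\cM^\vee\simeq\cO(-\cM\cdot E)$, which is \eqref{eq:3-48}.

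For \eqref{eq:3-49}, I would use Grothendieck duality for the finite morphism $\iota$, namely $\iota^!\cM\simeq\iota^*\cM\otimes\cN_{E/Y}[-1]$. This is the effective Cartier divisor computation of Tag~0AA4, for which $\iota^!\cO_Y=\cO_E(E)[-1]$. Since $E^2=-1$, the normal bundle is $\cN_{E/Y}=\cO_E(E)\simeq\cO_{\mathbf P^1}(-1)$. Hence
\[
 \RHom_Y(\iota_*\cO_E(d),\cM)\simeq\RHom_E(\cO_E(d),\iota^*\cM\otimes\cO(-1))[-1]\simeq R\Gamma(\mathbf P^1,\cO(\cM\cdot E-d-1))[-1].
\]
If one prefers to avoid $\iota^!$, I would instead apply $\RHom(-,\cM)$ to the locally free resolution $0\to\cO_Y(-E)\otimes\widetilde{\cO}(d)\to\widetilde{\cO}(d)\to\cO_E(d)\to0$, where $\widetilde{\cO}(d)$ is any line bundle restricting to $\cO_E(d)$. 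This gives $\mathcal{E}xt^1(\cO_E(d),\cM)\simeq\cM(E)|_E\otimes\cO_E(-d)$, of degree $\cM\cdot E-1-d$, while $\mathcal{H}om$ vanishes. The local-to-global spectral sequence then degenerates.

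The vanishing criteria follow from the fact that $R\Gamma(\mathbf P^1,\cO(n))=0$ exactly when $n=-1$. For the first criterion, $\cM\cdot E-d-1=-1$ is equivalent to $\cM\cdot E=d$. For the second, $d-\cM\cdot E=-1$ is equivalent to $\cM\cdot E=d+1$.

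The only delicate point is the sign of the normal bundle twist and the shift in \eqref{eq:3-49}. I will fix both by checking the case $\cM=\cO_Y$, $d=-1$: there $\RHom(\cO_E(-1),\cO_Y)$ should vanish, which is consistent with the sequence $0\to\cO_Y\to\cO_Y(E)\to\cO_E(-1)\to0$ used in Corollary~\ref{cor:3-25}.
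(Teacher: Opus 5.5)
Your computation is correct and is essentially the paper's argument. For \eqref{eq:3-48} the paper uses $L\iota^*\cM\simeq\cO_E(\cM\cdot E)$ and adjunction. For \eqref{eq:3-49} it uses the effective-Cartier-divisor formula $\iota^!\cM\simeq\cM|_E\otimes\cO_E(E)[-1]\simeq\cO_E(\cM\cdot E-1)[-1]$ from Tag~0AA4, exactly as you do. Your local-$\mathcal{E}xt$ alternative is also sound (for $\widetilde{\cO}(d)$ you can take $\cO_Y(-dE)$). It is the same two-term resolution the paper later uses in Lemma~\ref{lem:3-29}.

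The one thing to fix is the sanity check in your last paragraph, which is false as stated. If you used it to ``fix'' the twist and shift, it would push you away from the correct formula. For $\cM=\cO_Y$ and $d=-1$, your formula gives $\RHom_Y(\cO_E(-1),\cO_Y)\simeq R\Gamma(\mathbf P^1,\cO)[-1]\simeq\mathbb C[-1]\neq0$, and this is right. The sequence $0\to\cO_Y\to\cO_Y(E)\to\cO_E(-1)\to0$ is nonsplit, since $\cO_Y(E)$ is torsion-free. So it is itself a nonzero class in $\Ext^1(\cO_E(-1),\cO_Y)$.

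What that sequence, as used in Corollary~\ref{cor:3-25}, actually confirms is the opposite-direction vanishing $\RHom_Y(\cO_Y,\cO_E(-1))=R\Gamma(\mathbf P^1,\cO(-1))=0$. This is the case $\cM\cdot E=d+1$ of \eqref{eq:3-48}. A genuine test of \eqref{eq:3-49} is $\cM=\cO_Y$, $d=0$, where the formula predicts $\RHom_Y(\cO_E,\cO_Y)=0$. That agrees with applying $\RHom(-,\cO_Y)$ to $0\to\cO_Y(-E)\to\cO_Y\to\cO_E\to0$. Indeed, the resulting map $R\Gamma(\cO_Y)\to R\Gamma(\cO_Y(E))$ has cone $R\Gamma(\cO_E(-1))=0$.
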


\begin{proof}
We apply the effective-Cartier-divisor duality formula \resultcite{Tag~0AA4}{Stacks} to $\iota$, and compute the resulting line-bundle cohomology on $E\simeq\mathbf P^1$.  One has
\[
 L\iota^*\cM=\cM|_E\simeq\cO_E(\cM\cdot E),
 \qquad
 \iota^!\cM\simeq\cM|_E\otimes\cO_E(E)[-1]
 \simeq\cO_E(\cM\cdot E-1)[-1].
\]
Adjunction gives \eqref{eq:3-48}--\eqref{eq:3-49}.  The vanishings follow from
\[
 R\Gamma(\mathbf P^1,\cO(n))=0\quad\Longleftrightarrow\quad n=-1.
 \qedhere
\]
\end{proof}

\begin{lemma}\label{lem:3-27}
If $r>0$ and $(\cL_1,\ldots,\cL_\gamma)$ is a full exceptional collection of line bundles on $Y$, then $\cL_j\simeq\cL_i(E_a)$ for some $i<j$ and some $a$.
\end{lemma}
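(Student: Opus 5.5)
The plan is to argue by contradiction using the height estimate of Proposition~\ref{prop:3-16}. Suppose that no pair $i<j$ satisfies $\cL_j\simeq\cL_i(E_a)$. Then I will show that condition \eqref{eq:3-3} holds for the collection on $Y$. Proposition~\ref{prop:3-16} would then give $h_Y(\cE)\ge1$, which contradicts fullness by Proposition~\ref{prop:2-2}.

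The key step is to classify the zero divisors of forward morphisms. Take $i<j$ and a nonzero map $\cL_i\to\cL_j$, and let $D>0$ be its zero divisor. As recalled in Subsection~\ref{subsec:left-orthogonal}, $D$ is effective and left-orthogonal. By Lemma~\ref{lem:3-24}, either $D=E_a$ for some $a$, or $D=\pi^*D_0$ for an effective left-orthogonal $D_0>0$ on $S$ whose support avoids $\bigcup_aE_a$.

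In the first case $\cL_j\simeq\cL_i(E_a)$, which is exactly the conclusion we want. So under the contradiction hypothesis only the second case occurs. There, every prime component $C$ of $D$ is the pullback $\pi^*C_0$ of a prime component $C_0$ of $D_0$, and $C$ is disjoint from the exceptional curves. By \eqref{eq:3-47},
\[
 K_Y\cdot C=\pi^*K_S\cdot\pi^*C_0+\sum_aE_a\cdot C=K_S\cdot C_0\ge0,
\]
since $K_S$ is nef. Therefore \eqref{eq:3-3} holds for every forward morphism, and the contradiction follows.

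The nonemptiness of the collection, needed in Proposition~\ref{prop:3-16}, is automatic because the collection is full. The hypothesis $r>0$ is needed for the exceptional curves $E_a$ to exist. If $r=0$, the same argument shows directly that $S$ carries no full collection.

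The only delicate point is checking that Lemma~\ref{lem:3-24} applies to every zero divisor. It does: exceptionality of the collection makes each forward difference $D_j-D_i$ left-orthogonal, and a nonzero section of $\cO_Y(D_j-D_i)$ has zero divisor $D>0$ because distinct members are not isomorphic. Beyond this, no further obstacle is expected, since the height machinery carries the weight of the argument.
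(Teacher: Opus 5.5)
Your proposal is correct and follows essentially the same route as the paper. Both arguments assume no forward pair differs by an exceptional curve, and use Lemma~\ref{lem:3-24} to make every forward zero divisor a pullback disjoint from the $E_a$. This gives $K_YC=K_S\pi_*C\ge0$, so Proposition~\ref{prop:3-16} yields height at least one, contradicting fullness via Proposition~\ref{prop:2-2}.
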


\begin{proof}
Otherwise every nonzero forward morphism has a pullback divisor by Lemma~\ref{lem:3-24}.  Each of its prime components $C$ satisfies
\[
 CE_a=0\text{ for all }a,
 \qquad K_YC=K_S\pi_*C\ge0.
\]
Proposition~\ref{prop:3-16} and fullness would then give
\[
 1\le h_Y(\cL_1,\ldots,\cL_\gamma)=0,
\]
contrary to Proposition~\ref{prop:2-2}.
\end{proof}

\begin{proposition}\label{prop:3-28}
If $r>0$ and $(\cL_1,\ldots,\cL_\gamma)$ is a full exceptional collection of line bundles on $Y$, it contains an adjacent pair $(\cL,\cL(E_a))$ for some exceptional curve $E_a$.
\end{proposition}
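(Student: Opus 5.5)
The plan is to start from Lemma~\ref{lem:3-27}, which gives indices $i<j$ and an exceptional curve $E=E_a$ with $\cL_j\simeq\cL_i(E)$. Among all such pairs (over all $a$), I choose one with $j-i$ minimal and show that $j=i+1$. Suppose instead that some index $k$ satisfies $i<k<j$. The aim is to produce a new pair of the same form with strictly smaller index gap, contradicting minimality.

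The key object is the quotient sheaf. The exact sequence $0\to\cO_Y\to\cO_Y(E)\to\cO_E(-1)\to0$, tensored by $\cL_i$, gives a triangle
\[
 \cL_i\longrightarrow\cL_j\longrightarrow\cF,\qquad \cF\simeq\cO_E(d),\quad d=\cL_j\cdot E=\cL_i\cdot E-1.
\]
Semiorthogonality gives $\RHom(\cL_k,\cL_i)=0$, because $k>i$. Applying $\RHom(\cL_k,-)$ to the triangle therefore yields
\[
 \RHom(\cL_k,\cL_j)\simeq\RHom(\cL_k,\cF).
\]
Semiorthogonality also gives $\RHom(\cL_j,\cL_k)=0$, because $j>k$. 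Applying $\RHom(-,\cL_k)$ to the triangle yields
\[
 \RHom(\cL_i,\cL_k)\simeq\RHom(\cF,\cL_k)[1].
\]
Now put $m=\cL_k\cdot E$. By Lemma~\ref{lem:3-26} these become
\[
 \RHom(\cL_k,\cL_j)\simeq R\Gamma(\mathbf P^1,\cO(d-m)),\qquad
 \RHom(\cL_i,\cL_k)\simeq R\Gamma(\mathbf P^1,\cO(m-d-1)).
\]
The shifts $[-1]$ from \eqref{eq:3-49} and $[1]$ from the triangle cancel in the second identification.

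The argument then splits into two cases according to the sign of $d-m$.

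If $d-m\ge0$, then $\RHom(\cL_k,\cL_j)$ is nonzero and concentrated in degree zero. Corollary~\ref{cor:3-25}, applied to the exceptional pair $(\cL_k,\cL_j)$, gives $\cL_j\simeq\cL_k(E_b)$ for some $b$. This pair has gap $j-k<j-i$.

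If $d-m\le-1$, then $m-d-1\ge0$, so $\RHom(\cL_i,\cL_k)$ is nonzero and concentrated in degree zero. Corollary~\ref{cor:3-25} gives $\cL_k\simeq\cL_i(E_b)$, with gap $k-i<j-i$.

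In both cases minimality is contradicted, so $j=i+1$ and the pair is adjacent.

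The only delicate point is the bookkeeping of shifts and twists. Specifically, one must identify the cokernel as $\cO_E(\cL_i\cdot E-1)$, using $E^2=-1$, and check the degree conventions in Lemma~\ref{lem:3-26}. This is what makes the two cases exhaustive: the two degrees $d-m$ and $m-d-1$ sum to $-1$, so at least one of them is nonnegative. Everything else follows directly from Corollary~\ref{cor:3-25} and semiorthogonality.
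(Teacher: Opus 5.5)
Your proposal is correct and follows the paper's proof essentially step for step. You take a minimal-gap pair from Lemma~\ref{lem:3-27}, use the quotient $\cO_E(\cL_i\cdot E-1)$, and reduce via semiorthogonality and Lemma~\ref{lem:3-26} to $R\Gamma(\mathbf P^1,\cO(m-d-1))$ and $R\Gamma(\mathbf P^1,\cO(d-m))$, which is the paper's $t=\cL_k\cdot E-\ell$ dichotomy in different notation. In either case Corollary~\ref{cor:3-25} produces a pair with smaller gap, contradicting minimality.
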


\begin{proof}
By Lemma~\ref{lem:3-27}, choose $\cL_j\simeq\cL_i(E_a)$ with
\[
 j-i=\min\{v-u:u<v,\ \cL_v\simeq\cL_u(E_b)\text{ for some }b\}.
\]
Set $E=E_a$, $\ell=\cL_i\cdot E$, and $T=\cO_E(\ell-1)$.  Then
\begin{equation}\label{eq:3-50}
 0\longrightarrow\cL_i\longrightarrow\cL_j\longrightarrow T\longrightarrow0.
\end{equation}
For $i<k<j$, put $t=\cL_k\cdot E-\ell$.  Semiorthogonality and Lemma~\ref{lem:3-26} give
\begin{align*}
 \RHom(\cL_i,\cL_k)
 &\simeq\RHom(T,\cL_k)[1]
 \simeq R\Gamma(\mathbf P^1,\cO(t)),\\
 \RHom(\cL_k,\cL_j)
 &\simeq\RHom(\cL_k,T)
 \simeq R\Gamma(\mathbf P^1,\cO(-t-1)).
\end{align*}
Since $t\in\mathbb Z$, precisely one of the following applies:
\[
 \begin{array}{c@{\qquad}c@{\qquad}c}
 &\text{nonzero degree-zero complex}&\text{index gap}\\[2pt]
 t\ge0&\RHom(\cL_i,\cL_k)\simeq\mathbb C^{t+1}&k-i<j-i\\[2pt]
 t\le-1&\RHom(\cL_k,\cL_j)\simeq\mathbb C^{-t}&j-k<j-i.
 \end{array}
\]
Corollary~\ref{cor:3-25} produces an exceptional-curve difference of smaller gap in either case.  Thus no $k$ satisfies $i<k<j$, and $j=i+1$.
\end{proof}

\subsection{Descending through a contraction}\label{subsec:contraction}

Let $f\colon Y\to Z$ contract a $(-1)$-curve $E$, with $p_g(Y)=q(Y)=0$.  Once an adjacent pair $(\cL,\cL(E))$ has been found, its evaluation sequence replaces one line bundle by a sheaf supported on $E$.  Moving this sheaf to the beginning of the collection leaves line bundles in all remaining positions.  A common twist then makes their degrees on $E$ zero, so that they descend to $Z$.

The relevant decomposition $\Db(Y)=\langle\cO_E(-1),Lf^*\Db(Z)\rangle$ is the point-center case of \resultcite{Theorem~4.3}{Orl93}.  The calculations below identify the line bundles in its pullback component.  They apply the evaluation and coevaluation triangles of \resultcite{Definition~2.8 and Lemma~2.11}{Hu19}, with the shifts of Subsection~\ref{subsec:exceptional-mutations}, to the divisor and restriction exact sequences on $Y$.

\begin{lemma}\label{lem:3-29}
Let $\cL$ be a line bundle, $d=\cL\cdot E-1$, and $T=\cO_E(d)$.  Then $T$ is exceptional, and
\[
 \bL_{\cL}(\cL(E))\simeq T,
 \qquad\langle\cL,\cL(E)\rangle=\langle T,\cL\rangle.
\]
If a line bundle $\cM$ satisfies $\cM\cdot E=d$, then
\[
 \bR_T(\cM)\simeq\cM(-E),
 \qquad\langle\cM,T\rangle=\langle T,\cM(-E)\rangle.
\]
All displayed ordered pairs are exceptional.
\end{lemma}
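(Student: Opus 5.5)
The plan is to compute each mutation directly from the divisor and restriction exact sequences, using Lemma~\ref{lem:3-26} for all $\RHom$'s involving $T$ and Corollary~\ref{cor:3-25} for the pair $(\cL,\cL(E))$.

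\emph{Exceptionality of $T$.} Apply $\RHom(-,T)$ to the restriction sequence $0\to\cO_Y(-E)\to\cO_Y\to\cO_E\to0$ twisted to resolve $T$. More simply, take $\RHom_Y(\iota_*\cO_E(d),\iota_*\cO_E(d))\simeq\RHom_E(L\iota^*\iota_*\cO_E(d),\cO_E(d))$. Since $L\iota^*\iota_*\cO_E(d)$ has cohomology $\cO_E(d)$ in degree $0$ and $\cO_E(d)\otimes\cO_E(-E)=\cO_E(d+1)$ in degree $-1$, the result is $R\Gamma(\cO_{\mathbf P^1})\oplus R\Gamma(\cO_{\mathbf P^1}(-1))[-1]=\mathbb C$. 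This is the standard fact that a $(-1)$-curve gives exceptional line bundles on $E$.

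\emph{First mutation.} By Corollary~\ref{cor:3-25}, $\RHom(\cL,\cL(E))\simeq\mathbb C$, spanned by the section $s_E$. The evaluation triangle is therefore $\cL\xrightarrow{s_E}\cL(E)\to\bL_\cL(\cL(E))$. The cone of the injective sheaf map $s_E$ is its cokernel $\cL(E)|_E\simeq\cO_E(\cL\cdot E+E^2)=\cO_E(\cL\cdot E-1)=T$. Lemma~\ref{lem:3-26} with $d=\cL\cdot E-1$ gives $\RHom(T,\cL)\simeq R\Gamma(\cO(0))[-1]\ne0$ and $\RHom(\cL,T)\simeq R\Gamma(\cO(-1))=0$. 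Hence $(T,\cL)$ is exceptional, and the equality of spans follows from \resultcite{Lemma~2.11}{Hu19}, or directly from the triangle.

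\emph{Second mutation.} Let $\cM\cdot E=d$. Lemma~\ref{lem:3-26} gives $\RHom(T,\cM)=0$, so $(\cM,T)$ is exceptional. It also gives $\RHom(\cM,T)\simeq R\Gamma(\mathbf P^1,\cO(0))=\mathbb C$, which is concentrated in degree zero. The coevaluation triangle is thus $\bR_T(\cM)\to\cM\to T$, where the map is the nonzero restriction $\cM\to\cM|_E=\cO_E(d)=T$; it is unique up to scalar. This map is surjective with kernel $\cM(-E)$, so $\bR_T(\cM)\simeq\cM(-E)$. Exceptionality of $(T,\cM(-E))$ holds because $\cM(-E)\cdot E=d+1$, so Lemma~\ref{lem:3-26} gives $\RHom(\cM(-E),T)=0$. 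The span equality again comes from the triangle. Finally, $\cM$ and $\cM(-E)$ are exceptional line bundles because $p_g=q=0$.

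The only real bookkeeping hazard is the shift convention: with the paper's unshifted $\bL,\bR$, the cones above are honest sheaves in degree zero. Identifying the unique morphism with the restriction or multiplication map is the step to verify. Beyond that, nothing is expected to be a serious obstacle.
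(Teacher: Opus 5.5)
Your proposal is correct and follows essentially the same route as the paper: the evaluation sequence $0\to\cL\to\cL(E)\to T\to0$ and the restriction sequence $0\to\cM(-E)\to\cM\to T\to0$ realize the two mutations, and Lemma~\ref{lem:3-26} supplies every semiorthogonality check. The differences are only in presentation. You compute $\RHom(T,T)$ through the adjunction $\RHom_E(L\iota^*\iota_*T,T)$ rather than through the local sheaf Exts; the extra $\cO_E(d+1)[1]$ term contributes $R\Gamma(\cO(-1))=0$, so the triangle suffices and no splitting is needed. You also quote the converse of Corollary~\ref{cor:3-25}, which is stated for the special blowup, where the paper rederives $\RHom(\cL,\cL(E))\simeq\mathbb C$ and $\RHom(\cL(E),\cL)=0$ directly from the $\cO_Y(\pm E)$ sequences. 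This is legitimate, since that argument uses only $R\Gamma(\cO_Y)\simeq\mathbb C$.
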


\begin{proof}
The two-term Cartier-divisor resolution, as in \resultcite{proof of Tag~0AA4}{Stacks}, gives the sheaf Ext groups
\[
 \mathcal Ext_Y^q(T,T)=
 \begin{cases}
 \cO_E,&q=0,\\
 \cO_E(-1),&q=1,\\
 0,&q\notin\{0,1\}.
 \end{cases}
\]
Since $R\Gamma(\cO_E)\simeq\mathbb C$ and $R\Gamma(\cO_E(-1))=0$,
$\RHom_Y(T,T)\simeq\mathbb C$.  The sequences for $\cO_Y(\pm E)$ and
$R\Gamma(\cO_Y)\simeq\mathbb C$ give
\[
 \RHom(\cL(E),\cL)=R\Gamma(\cO_Y(-E))=0,
 \qquad
 \RHom(\cL,\cL(E))=R\Gamma(\cO_Y(E))\simeq\mathbb C.
\]
Thus the evaluation sequence
\begin{equation}\label{eq:3-51}
 0\longrightarrow\cL\longrightarrow\cL(E)\longrightarrow T\longrightarrow0
\end{equation}
identifies $\bL_{\cL}(\cL(E))$ with $T$.  Moreover,
\[
 \RHom(\cL,T)=R\Gamma(\mathbf P^1,\cO(-1))=0
\]
by Lemma~\ref{lem:3-26}.

For $\cM\cdot E=d$, the same lemma gives
\[
 \RHom(T,\cM)=0,
 \qquad\RHom(\cM,T)\simeq\mathbb C.
\]
The restriction $\cM|_E\simeq T$ yields the coevaluation sequence
\[
 0\longrightarrow\cM(-E)\longrightarrow\cM\longrightarrow T\longrightarrow0,
\]
so $\bR_T(\cM)\simeq\cM(-E)$.  Finally,
\[
 \cM(-E)\cdot E=d+1,
 \qquad\RHom(\cM(-E),T)=0.
\]
The exact sequences identify the thick spans; mutation preserves exceptionality.
\end{proof}

\begin{lemma}\label{lem:3-30}
Suppose $(\cO_E(d),\cM_1,\ldots,\cM_m)$ is a full exceptional collection on $Y$, with all $\cM_i$ line bundles.  There exist $\cN_i\in\Pic(Z)$ satisfying
\[
 f^*\cN_i\simeq\cM_i((d+1)E),
\]
and $(\cN_1,\ldots,\cN_m)$ is a full exceptional collection on $Z$.
\end{lemma}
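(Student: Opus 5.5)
Put $\cM_i'=\cM_i((d+1)E)$ for $i=1,\ldots,m$. The plan is to show that the $\cM_i'$ have degree zero on $E$, descend them to $Z$, and then transport exceptionality and fullness through $Lf^*$, which is fully faithful.

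\emph{Degree computation.} Semiorthogonality gives $\RHom(\cM_i,\cO_E(d))=0$ for every $i$. By Lemma~\ref{lem:3-26} this forces $\cM_i\cdot E=d+1$. Since $E^2=-1$,
\[
 \cM_i'\cdot E=(d+1)-(d+1)=0.
\]

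\emph{Descent.} A line bundle of degree zero on the $(-1)$-curve $E$ is a pullback from $Z$. This follows from the standard decomposition $\Pic(Y)=f^*\Pic(Z)\oplus\mathbb Z E$, in which a class $f^*\cN+nE$ has degree $-n$ on $E$. So we can write $\cM_i'\simeq f^*\cN_i$. Twisting the whole collection by $\cO_Y((d+1)E)$ preserves exceptionality and fullness. The twisted collection is
\[
 \bigl(\cO_E(d)\otimes\cO_Y((d+1)E),\ f^*\cN_1,\ldots,f^*\cN_m\bigr).
\]
Because $\cO_Y(E)|_E\simeq\cO_E(-1)$, the first member is $\cO_E(-1)$. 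The result is a full exceptional collection $(\cO_E(-1),f^*\cN_1,\ldots,f^*\cN_m)$ on $Y$.

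\emph{Transfer to $Z$.} We have $Rf_*\cO_Y\simeq\cO_Z$, so by the projection formula
\[
 \RHom_Y(Lf^*F,Lf^*G)\simeq\RHom_Z(F,G).
\]
Hence $Lf^*$ is fully faithful, and $(\cN_1,\ldots,\cN_m)$ is exceptional on $Z$. For fullness we use Orlov's decomposition
\[
 \Db(Y)=\langle\cO_E(-1),Lf^*\Db(Z)\rangle.
\]
This gives
\[
 \langle f^*\cN_1,\ldots,f^*\cN_m\rangle=\langle\cO_E(-1)\rangle^\perp=Lf^*\Db(Z).
\]
The first equality holds because both subcategories are the right orthogonal of $\langle\cO_E(-1)\rangle$ in a full collection, respectively in a semiorthogonal decomposition. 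Alternatively, one can compare the ranks of $K_0$ and use the projection triangle. Take $G\in\Db(Z)$ with $\RHom(\cN_i,G)=0$ for all $i$. Then $\RHom(f^*\cN_i,Lf^*G)=0$ for all $i$. Also $\RHom(\cO_E(-1),Lf^*G)=0$, by adjunction with $Rf_*\cO_E(-1)=0$ and Orlov. So $Lf^*G=0$ by fullness on $Y$, and $G\simeq Rf_*Lf^*G=0$.

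\emph{Main obstacle.} The only delicate point is the sign bookkeeping in the degree condition: the vanishing direction of Lemma~\ref{lem:3-26} must give $\cM_i\cdot E=d+1$ and not $d$. The vanishing to check is $\RHom(\cM_i,\cO_E(d))$, which is the one appearing in the exceptional sequence because $\cO_E(d)$ comes first. Everything else is standard input: the Picard decomposition of a blowup and Orlov's theorem.
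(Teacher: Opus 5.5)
Your degree computation, the twist by $\cO_Y((d+1)E)$, the descent through $\Pic(Y)=f^*\Pic(Z)\oplus\mathbb Z[E]$, and exceptionality via $Rf_*\cO_Y\simeq\cO_Z$ and the projection formula are the same as the paper's steps. For fullness the paper does not use Orlov's decomposition. Since your twisted collection on $Y$ is already full, $Lf^*F\in\langle\cO_E(-1),f^*\cN_1,\ldots,f^*\cN_m\rangle$ for every $F\in\Db(Z)$. Applying the exact functor $Rf_*$, with $Rf_*\cO_E(-1)=0$ and $Rf_*f^*\cN_i\simeq\cN_i$, gives $F\simeq Rf_*Lf^*F\in\langle\cN_1,\ldots,\cN_m\rangle$. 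You need exactly this step anyway to pass from $\langle f^*\cN_1,\ldots,f^*\cN_m\rangle=Lf^*\Db(Z)$ back to $Z$, and you leave it implicit. With it, Orlov's theorem becomes unnecessary.

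Your comparison of complements is correct in substance, but you put the orthogonal on the wrong side. Since $\RHom(f^*\cN_i,\cO_E(-1))\simeq\RHom(\cN_i,Rf_*\cO_E(-1))=0$, both $\langle f^*\cN_1,\ldots,f^*\cN_m\rangle$ and $Lf^*\Db(Z)$ equal the left orthogonal $\{F:\RHom(F,\cO_E(-1))=0\}$. In the paper's convention, $\langle\cO_E(-1)\rangle^\perp$ is instead $Lf^*\Db(Z)\otimes\cO_Y(E)$.

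The same slip makes your alternative argument false as written. The complex $\RHom(\cO_E(-1),Lf^*G)$ need not vanish: for example, $\RHom(\cO_E(-1),\cO_Y)\simeq\mathbb C[-1]$ by \eqref{eq:3-49}, which reflects the nonsplit sequence $0\to\cO_Y\to\cO_Y(E)\to\cO_E(-1)\to0$. Adjunction only gives the vanishing in the other direction, $\RHom(Lf^*G,\cO_E(-1))\simeq\RHom(G,Rf_*\cO_E(-1))=0$. There are two ways to repair it.
\begin{itemize}
\item Start from $G$ with $\RHom(G,\cN_i)=0$ for all $i$. Then $Lf^*G$ lies in the left orthogonal of the full collection on $Y$, which is zero. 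Admissibility of $\langle\cN_1,\ldots,\cN_m\rangle$ then turns ${}^\perp\langle\cN_1,\ldots,\cN_m\rangle=0$ into fullness.
\item Replace $Lf^*G$ by $f^!G\simeq Lf^*G\otimes\cO_Y(E)$. For this object, $\RHom(\cO_E(-1),f^!G)\simeq\RHom(Rf_*\cO_E(-1),G)=0$ and $\RHom(f^*\cN_i,f^!G)\simeq\RHom(\cN_i,G)=0$.
\end{itemize}
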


\begin{proof}
Lemma~\ref{lem:3-26} gives
\[
 \RHom(\cM_i,\cO_E(d))=0\quad\Longrightarrow\quad\cM_i\cdot E=d+1.
\]
Tensoring the collection by $\cO_Y((d+1)E)$ gives
\[
 \cO_E(d)\otimes\cO_Y((d+1)E)\simeq\cO_E(-1),
 \qquad\cM_i((d+1)E)\cdot E=0.
\]
The point-blowup Picard decomposition \resultcite{\S1, before Lemma~1.2}{Roe01} implies
\[
 \Pic(Y)=f^*\Pic(Z)\oplus\mathbb Z[E],
 \qquad
 \mathcal P=f^*\cN\otimes\cO_Y(aE)
 \quad\Longrightarrow\quad\mathcal P\cdot E=-a.
\]
Hence $\cM_i((d+1)E)\simeq f^*\cN_i$, and
\begin{equation}\label{eq:3-52}
 \Db(Y)=\langle\cO_E(-1),f^*\cN_1,\ldots,f^*\cN_m\rangle.
\end{equation}
The following full-faithfulness calculation is the point-blowup specialization of \resultcite{Lemma~4.1 and its proof}{Orl93}.  Using $Rf_*\cO_Y\simeq\cO_Z$ and the projection formula,
\[
 Rf_*Lf^*F\simeq F,
 \qquad
 \RHom_Y(f^*\cN_j,f^*\cN_i)\simeq\RHom_Z(\cN_j,\cN_i).
\]
Thus $(\cN_i)$ is exceptional.  For every $F\in\Db(Z)$,
\begin{gather*}
 Lf^*F\in\langle\cO_E(-1),f^*\cN_1,\ldots,f^*\cN_m\rangle,\\
 Rf_*\cO_E(-1)=0,\qquad Rf_*f^*\cN_i\simeq\cN_i,\\
 F\simeq Rf_*Lf^*F\in\langle\cN_1,\ldots,\cN_m\rangle.
\end{gather*}
This proves fullness.
\end{proof}

\begin{proposition}\label{prop:3-31}
Let $f\colon Y\to Z$ contract a $(-1)$-curve $E$ on a smooth projective surface with $p_g(Y)=q(Y)=0$.  If a full exceptional collection of line bundles on $Y$ contains an adjacent pair $(\cL,\cL(E))$, then $Z$ admits a full exceptional collection of line bundles of length one less.
\end{proposition}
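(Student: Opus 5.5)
Write the collection as $(\cL_1,\ldots,\cL_{k-1},\cL,\cL(E),\cL_{k+2},\ldots,\cL_\gamma)$, with the adjacent pair in positions $k,k+1$. The plan is to reach the hypothesis of Lemma~\ref{lem:3-30}, namely a full exceptional collection whose first member is a sheaf $\cO_E(d)$ and whose remaining members are line bundles. Then Lemma~\ref{lem:3-30} finishes the proof, producing line bundles $\cN_i$ on $Z$ with $f^*\cN_i\simeq\cM_i((d+1)E)$, and the length drops by one.

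\emph{Step 1 (replace the pair).} Put $d=\cL\cdot E-1$ and $T=\cO_E(d)$. By Lemma~\ref{lem:3-29}, the left mutation replaces $(\cL,\cL(E))$ by $(T,\cL)$. Mutations within an exceptional pair preserve exceptionality and fullness of the whole collection, by \resultcite{Lemma~2.11}{Hu19}. We obtain the full exceptional collection
\[
(\cL_1,\ldots,\cL_{k-1},T,\cL,\cL_{k+2},\ldots,\cL_\gamma).
\]

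\emph{Step 2 (move $T$ to the front).} Move $T$ leftward past $\cL_{k-1},\ldots,\cL_1$ one step at a time by the right mutations $\bR_T$. At each step, Lemma~\ref{lem:3-29} applies provided the line bundle $\cM$ immediately left of $T$ satisfies $\cM\cdot E=d$. It then gives $\langle\cM,T\rangle=\langle T,\cM(-E)\rangle$, so $\cM$ is replaced by the line bundle $\cM(-E)$. The needed degree comes from exceptionality. In the current collection $\cM$ precedes $T$, so $\RHom(T,\cM)=0$, and Lemma~\ref{lem:3-26} forces $\cM\cdot E=d$. Each swap keeps the collection full and exceptional with line bundles elsewhere. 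After $k-1$ swaps we get the full exceptional collection
\[
(T,\cL_1(-E),\ldots,\cL_{k-1}(-E),\cL,\cL_{k+2},\ldots,\cL_\gamma).
\]
Since $T=\cO_E(d)$ and all other members are line bundles, Lemma~\ref{lem:3-30} applies and yields a full exceptional collection of $\gamma-1$ line bundles on $Z$.

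\emph{Main obstacle.} The delicate point is Step 2. I must check that the semiorthogonality $\RHom(T,\cM)=0$ is available at every stage, since $T$ sits to the right of $\cM$ at each stage. I also need that a right mutation $\bR_T$ over a single adjacent pair keeps the remainder of the collection exceptional. Both follow because each mutation preserves the thick span of the pair, and hence the semiorthogonal decomposition. Finally, I must confirm that $Z$ is smooth projective with $p_g(Z)=q(Z)=0$. This holds because $Rf_*\cO_Y\simeq\cO_Z$ (Castelnuovo contraction), so the exceptionality conventions carry over.
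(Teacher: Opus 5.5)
Your proposal is correct and matches the paper's proof step for step. Lemma~\ref{lem:3-29} turns the adjacent pair into $(T,\cL)$. Semiorthogonality together with Lemma~\ref{lem:3-26} gives $\cL_j\cdot E=d$ for every $j<k$. The right mutations in the order $j=k-1,\ldots,1$ then produce $(T,\cL_1(-E),\ldots,\cL_{k-1}(-E),\cL,\cL_{k+2},\ldots,\cL_\gamma)$, and Lemma~\ref{lem:3-30} finishes. The paper handles the checks under your ``main obstacle'' the way you suggest: mutation preserves thick spans, and the facts needed on $Z$ come from $Rf_*\cO_Y\simeq\cO_Z$ inside Lemma~\ref{lem:3-30}.
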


\begin{proof}
Write the collection as
\[
 (\cL_1,\ldots,\cL_{i-1},\cL,\cL(E),\cL_{i+2},\ldots,\cL_\gamma),
 \qquad d=\cL\cdot E-1,\quad T=\cO_E(d).
\]
Lemma~\ref{lem:3-29} replaces $(\cL,\cL(E))$ by $(T,\cL)$.  For every preceding line bundle, semiorthogonality and Lemma~\ref{lem:3-26} give
\[
 \RHom(T,\cL_j)=0\quad\Longrightarrow\quad\cL_j\cdot E=d
 \qquad(j<i).
\]
Apply the replacement $(\cL_j,T)\mapsto(T,\cL_j(-E))$ of Lemma~\ref{lem:3-29}, in the order $j=i-1,\ldots,1$.  The resulting full exceptional collection is
\[
 (T,\cL_1(-E),\ldots,\cL_{i-1}(-E),\cL,\cL_{i+2},\ldots,\cL_\gamma).
\]
It consists of $T$ followed by $\gamma-1$ line bundles.  Apply Lemma~\ref{lem:3-30}.
\end{proof}

\subsection{Proof of the rationality theorem}\label{subsec:main-proof}

The contraction step preserves the hypotheses needed to find the next adjacent pair: the base surface $S$ stays fixed, and the remaining centers remain distinct and avoid every rational curve on $S$.  Thus the collection can be descended until no blowup centers remain.

\begin{proof}[Proof of Theorem~\ref{thm:main}]
Suppose that $X$ is nonrational and admits a full exceptional collection of line bundles.  Let $X\to S$ be a tower of $r$ point blowups with $S$ minimal.  Exceptionality and birational invariance give
\[
 p_g(S)=p_g(X)=0,\qquad q(S)=q(X)=0.
\]
By the surface minimal model theorem, a minimal surface is either of nef canonical class or is $\mathbf P^2$ or ruled; in characteristic zero, uniruled surfaces are birationally ruled \resultcite{\S\S5.6 and~9.3}{Deb11}.  A ruled surface with $q=0$ is rational.  Thus
\[
 X\text{ nonrational}\quad\Longrightarrow\quad
 S\text{ non-uniruled},\qquad K_S\text{ nef}.
\]
Corollary~\ref{cor:3-23} gives a full exceptional collection of line bundles on
$Y=\operatorname{Bl}_{p_1,\ldots,p_r}S$, where the centers are distinct and avoid all integral rational curves of $S$.

For $I\subseteq\{1,\ldots,r\}$, write
\[
 Y_I=\operatorname{Bl}_{\{p_a:a\in I\}}S,
 \qquad Y_\varnothing=S,
\]
and let $P(I)$ denote the existence of a full exceptional collection of line bundles on $Y_I$.  We have $P(\{1,\ldots,r\})$.  For every nonempty $I$, Propositions~\ref{prop:3-28} and~\ref{prop:3-31} give
\[
 P(I)\quad\Longrightarrow\quad P(I\setminus\{a\})
 \quad\text{for some }a\in I.
\]
The remaining centers retain the avoidance property.  Induction on $|I|$ therefore yields $P(\varnothing)$, including the case $r=0$.

Let $\cE_S$ be the resulting full collection on $S$.  Since $\Db(S)\ne0$, this collection is nonempty.  Propositions~\ref{prop:2-2} and~\ref{prop:3-16} imply
\[
 0=h_S(\cE_S)\ge1,
\]
a contradiction.  Hence $X$ is rational.
\end{proof}

\begin{proof}[Proof of Corollary~\ref{cor:characterization}]
Theorem~\ref{thm:main} gives the implication from a full exceptional collection of line bundles to rationality.  The converse is \resultcite{Theorem~5.6}{HP11}.
\end{proof}

\clearpage
\appendix
\section{The rationality theorem in arbitrary characteristic}\label{sec:arbitrary-characteristic}

The argument of the main text extends to algebraically closed fields of arbitrary characteristic. We use its characteristic-independent proofs and give the replacements for nilpotent units, the choice of blowup centers, and the minimal-model input.

\begin{theorem}\label{thm:arbitrary-characteristic}
Let $k$ be an algebraically closed field of arbitrary characteristic, and let $X$ be a smooth connected projective surface over $k$. If $\Db(X)$ admits a full exceptional collection of line bundles, then $X$ is rational.
\end{theorem}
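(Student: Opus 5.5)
We follow the proof of Theorem~\ref{thm:main} step by step. For each step we check whether it is independent of the characteristic, and we supply replacements for the three places where $\mathbb C$ was used.

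First, the minimal model input. Suppose $X$ is nonrational and admits a full exceptional collection of line bundles. Then $p_g(X)=q(X)=0$, and $X$ is a tower of point blowups over a minimal surface $S$. In arbitrary characteristic the Enriques classification (Bombieri--Mumford) shows that $S$ has nef $K_S$ unless $S$ is $\mathbf P^2$ or ruled. A ruled surface with $q=0$ is rational. Hence $K_S$ is nef. Note that uniruledness no longer suffices, since unirational Zariski surfaces exist. We therefore work only with the nef condition. We do not try to bound the rational curves on $S$ by non-uniruledness; we use the generic-point argument described below.

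Second, the height estimate. Everything from Lemma~\ref{lem:3-1} to Proposition~\ref{prop:3-16} is algebra over a field, together with Riemann--Roch, Serre duality and Kuznetsov's localization triangle. All of this holds over $k$. The one exception is the finite logarithm in Lemma~\ref{lem:3-3}, which fails when $\operatorname{char}k=p<r$. We would replace it as follows. Filter $1+\cI_D$ by the subgroups $1+\cI_D^n$. Each graded piece is isomorphic to the coherent sheaf $\cI_D^n/\cI_D^{n+1}$ via $1+b\mapsto b$. We then need $H^1$ of these graded pieces to vanish. The cleaner route is a Picard-scheme argument: $\Pic^0_D$ is a connected group scheme whose tangent space is $H^1(D,\cO_D)=0$, so $\Pic^0_D$ is trivial. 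Together with $H^2=0$ on a curve, this gives $\Pic(D)\simeq\Pic(D_{\mathrm{red}})$. The rest of Lemma~\ref{lem:3-3} and Proposition~\ref{prop:3-4} then go through unchanged. Proposition~\ref{prop:3-16} holds over $k$, so no surface with nef $K$ carries a full exceptional collection of line bundles.

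Third, the choice of centers, which we expect to be the main obstacle. Corollary~\ref{cor:3-23} used a countability and Baire argument, and this is unavailable over a countable field such as $\overline{\mathbb F}_p$; in addition $S$ may carry families of rational curves. Our plan is to keep the families of Lemma~\ref{lem:3-20} and Proposition~\ref{prop:3-19}, both of which work over any base field, and to pass to an algebraically closed field $K\supset k(B_r)$ at a geometric generic point $\bar\eta$ of $\Conf_r(S)$. The fiber $Y_{\bar\eta}$ is the blowup of $S_K$ at generic points $p_1,\ldots,p_r$. The collection is full there because fullness is open and $\bar\eta$ lies in every nonempty open subset. The incidence property we need replaces avoidance of rational curves: no integral curve $C\subset S_K$ with $p_a(C)\le$ required bound (in particular a smooth rational curve with $C^2=-2-K_SC$) passes through any $p_a$. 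More precisely, we need that every effective left-orthogonal divisor on each partial blowup $Y_I$ is either a pullback avoiding the exceptional locus or a single exceptional curve; this is what Lemma~\ref{lem:3-24} uses. We would prove this by contradiction. Suppose $C$ is a smooth rational curve on $S_K$ through $p_a$ with bounded $K_S$-degree, so its class is left-orthogonal. Such curves are parametrized by a finite-type Hilbert scheme over $k$, and $C$ is defined over the residue field of some point $h$ of this scheme. The condition $p_a\in C$ then defines a proper closed incidence subset in $\Conf_r(S)\times\operatorname{Hilb}$. The components of the Hilbert scheme relevant to a given left-orthogonal class are countably many, but each incidence locus has dimension at most $\dim(\text{component})+1+2(r-1)$. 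Generic points of $\Conf_r(S)$ avoid any closed subset of $\Conf_r(S)$ that is defined over $k$, and there are only countably many such incidence loci. A geometric generic point avoids all of them, because it lies in none of their images, which are proper closed subsets over $k$. The key point to verify is that each image is proper. For this we use $h^0(\cO(C))=1$ from Lemma~\ref{lem:3-2}: left-orthogonal rational curves on $S$ with $K_SC\ge0$ are rigid, so each component sweeps out only a curve. With this incidence lemma in hand, Lemma~\ref{lem:3-24}, Proposition~\ref{prop:3-28} and Proposition~\ref{prop:3-31} apply verbatim over $K$. The induction descends to a full exceptional collection of line bundles on $S_K$, whose canonical class is still nef, and Proposition~\ref{prop:3-16} gives the contradiction.
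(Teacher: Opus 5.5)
Your first two steps and the passage to the geometric generic configuration via openness of fullness match the paper (Lemma~\ref{lem:appendix-minimal}, Lemma~\ref{lem:appendix-picard}, Proposition~\ref{prop:appendix-generic-fullness}). The gap is in the incidence step.

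\textbf{The gap.} You only exclude \emph{smooth} rational curves on $S_K$ through the centers, using rigidity on $S$ via Lemma~\ref{lem:3-2}. The curves that matter are components of effective left-orthogonal divisors on the partial blowups $Y_I$. They are smooth rational on $Y_I$, but their images in $S_K$ may be singular at other centers.

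For example, take a rational curve $C_0\subset S_K$ with a node at $p_b$ that passes smoothly through $p_a$. Its strict transform $\tilde C\subset Y_{\{a,b\}}$ is a smooth rational curve with $\tilde C E_a=1$. Then $\tilde C+E_a$ is an effective left-orthogonal divisor: it is a chain of two $\mathbb P^1$'s, so $H^0(\cO_D)=k$ and $H^1(\cO_D)=0$. This divisor is neither $E_a$ nor a pullback, and its component $E_a$ has negative canonical degree. That is exactly what breaks Lemma~\ref{lem:3-27}.

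Nothing in your argument excludes this. Since $C_0$ is not smooth, Lemma~\ref{lem:3-2} gives no rigidity for it on $S$. In characteristic $p$ the surface $S$ may be uniruled, so such singular rational curves can move in families through general points. Hence ``no smooth rational curve of $S_K$ passes through any $p_a$'' does not imply the dichotomy of Lemma~\ref{lem:3-24}. Moreover, the dichotomy you aim for (pullback avoiding the exceptional locus) is stronger than needed. It would also require excluding components meeting some $E_a$ with multiplicity at least two, and your count on $S$ does not address those.

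\textbf{The missing idea.} Weaken the dichotomy, and do the rigidity and dimension count on the blowups rather than on $S$.
\begin{itemize}
\item It suffices to show that either $D=E_a$, or $D$ has no exceptional component. In the second case $K_{Y_I}C=K_S\pi_*C+\sum_a E_aC\ge 0$ for every component, and $\chi(\cO_{Y_I}(D))=-K_{Y_I}D\le 0$. These are the only inputs Lemma~\ref{lem:3-27} and Corollary~\ref{cor:3-25} use.
\item If $D$ contains $E_a$ and another component, the nodal-tree structure of $D_{\mathrm{red}}$ gives a neighbor $C$ with $CE_a=1$. Its image in $Y_{I\setminus\{a\}}$ is a smooth rational curve through $p_a$.
\item So you only need this: on the blowup $Z$ of $S$ at $m-1$ distinct points, no smooth rational curve passes through a further geometric generic point.
\item Rigidity holds uniformly on $Z$. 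A nonexceptional smooth rational curve $C$ on $Z$ has $K_ZC\ge 0$ because $K_S$ is nef. Hence $C^2\le -2$ and $H^0(N_{C/Z})=0$.
\item The relative Hilbert scheme of such curves over $\Conf_{m-1}(S)$ is therefore quasi-finite, and its universal curve has dimension at most $2m-1$. For each Hilbert polynomial, its incidence image in $\Conf_m(S)$ is a proper closed subset.
\item Permuting the indices handles every $I$ and $a$. This is Lemma~\ref{lem:appendix-generic-avoidance} together with Proposition~\ref{prop:appendix-generic-divisors}.
\end{itemize}
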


Together with \resultcite{Theorem~5.6}{HP11}, whose construction applies over an algebraically closed field of any characteristic, this gives the same characterization as Corollary~\ref{cor:characterization}. Over a ground field that is not algebraically closed, the conclusions are as follows.

\begin{corollary}\label{cor:arbitrary-ground-field}
Let $k$ be a field, and let $X$ be a smooth geometrically integral projective surface over $k$. If $\Db(X)$ admits a full exceptional collection of line bundles, then $X$ is geometrically rational. If $k$ is perfect, then $X$ is $k$-rational.
\end{corollary}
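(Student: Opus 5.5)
The plan is to deduce both assertions of Corollary~\ref{cor:arbitrary-ground-field} from Theorem~\ref{thm:arbitrary-characteristic} by base change to an algebraic closure. For the perfect case we add a descent argument that uses only the $K$-theoretic and Picard data carried by a collection defined over $k$.

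\emph{Geometric rationality.} Let $\bar k$ be an algebraic closure and $\bar X=X_{\bar k}$, which is smooth and connected by geometric integrality. Let $(\cL_1,\ldots,\cL_\gamma)$ be the given collection.
\begin{itemize}
\item Exceptionality of the pullbacks $\bar\cL_i$ follows from flat base change, since $\RHom_{\bar X}(\bar\cL_j,\bar\cL_i)\simeq\RHom_X(\cL_j,\cL_i)\otimes_k\bar k$.
\item For fullness, I would use the projection kernel $C$ of Lemma~\ref{lem:3-18}, which is built from cones, convolution and derived base change. Its formation therefore commutes with the flat extension $k\subset\bar k$, so $C_{\bar k}$ is the kernel for $(\bar\cL_i)$. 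Since $\bar X\times\bar X\to X\times X$ is faithfully flat, $C=0$ forces $C_{\bar k}=0$.
\end{itemize}
Hence $\bar X$ carries a full exceptional collection of line bundles, and Theorem~\ref{thm:arbitrary-characteristic} gives rationality of $\bar X$.

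\emph{Arithmetic input for $k$ perfect.} A full exceptional collection makes $K_0(X)$ and $K_0(\bar X)$ free on the classes $[\cL_i]$ and $[\bar\cL_i]$ respectively. The pullback $K_0(X)\to K_0(\bar X)$ is therefore an isomorphism, and $\operatorname{Gal}(\bar k/k)$ acts trivially on $K_0(\bar X)$. Two consequences follow.
\begin{itemize}
\item Through the first Chern class, the Galois action on $\Pic(\bar X)$ is trivial, and every class descends to a line bundle on $X$.
\item Let $[\cO_x]$ be the class of a $\bar k$-point. It lies in the image of $K_0(X)$, say as $\sum n_i[\cL_i]$. Pairing with $\cO_X$ gives a class $\xi\in K_0(X)$ with $\chi(\cO_X,\xi)=1$ and vanishing rank and $c_1$. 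From this I expect to extract a zero-cycle of degree one on $X$, via the image of $\xi$ under $c_2$ or the topological filtration. This is the step I would verify most carefully.
\end{itemize}

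\emph{Running the MMP over $k$.} Now run the minimal model program over the perfect field $k$.
\begin{itemize}
\item Each $(-1)$-curve on $\bar X$ is the unique effective divisor in its class. Its class is Galois invariant, so the curve is defined over $k$, and we can contract it over $k$.
\item Contractions and Lang--Nishimura preserve both the trivial Galois action on $\Pic$ and the degree-one zero-cycle.
\item We thus reach a $k$-surface $Z$ whose base change $\bar Z$ is $\mathbf P^2$ or a Hirzebruch surface $\mathbf F_n$ with $n\ne1$, again with trivial Galois action.
\end{itemize}
We then treat the three cases of $\bar Z$.
\begin{itemize}
\item If $\bar Z\simeq\mathbf P^2$, then $Z$ is a Severi--Brauer surface with a zero-cycle of degree one. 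Its index, hence its class in $\operatorname{Br}(k)$, is trivial, so $Z\simeq\mathbf P^2_k$.
\item If $\bar Z\simeq\mathbf P^1\times\mathbf P^1$, the two rulings are Galois invariant. This makes $Z$ a product of two conics, each split by the degree-one cycle.
\item If $\bar Z\simeq\mathbf F_n$ with $n\ge2$, the ruling descends to a conic bundle over a conic $B$. The cycle splits $B$, and the unique negative section is defined over $k$ and maps isomorphically onto $B\simeq\mathbf P^1_k$. A conic bundle over $\mathbf P^1_k$ with a section is $k$-rational.
\end{itemize}

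I expect the main obstacle to be the zero-cycle step, i.e.\ turning the $K_0$ class $\xi$ into an honest degree-one zero-cycle on $X$. If that fails, I would instead use the Severi--Brauer and conic obstructions directly: they are detected by whether $\cO(1)$, or a fiber-degree-one line bundle, descends. That descent is already guaranteed by triviality of $\Pic$-descent in the first consequence above, which would let the case analysis go through without a zero-cycle.
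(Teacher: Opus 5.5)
Your proposal is correct. The geometric-rationality half is the paper's own argument: base-change the projection kernel, then apply Theorem~\ref{thm:arbitrary-characteristic}. One point needs care there. The implication $C=0\Rightarrow C_{\bar k}=0$ is trivial. What actually requires proof is that fullness over the non-closed field $k$ forces $C=0$, i.e.\ Lemma~\ref{lem:3-17} over $k$. The paper supplies this in Lemma~\ref{lem:appendix-fullness}: it applies the functor to residue fields of closed points and uses that pushforward from $x\times_k X$ is finite and faithful.

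For the perfect case you take a genuinely different route.

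\emph{The paper's route.} It only checks that the collection is numerically exceptional, consists of numerical line bundles, and has maximal numerical length. It then quotes Vial's Theorem~3.7, whose proof uses unimodularity of the intersection lattice to force every center in a $k$-minimal-model program to be $k$-rational.

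\emph{Your route.} You use the integral isomorphism $K_0(X)\simeq K_0(\bar X)$. Applying $\det$ shows that $\Pic(X)\to\Pic(\bar X)$ is surjective, which is stronger than triviality of the Galois action. This has three consequences:
\begin{itemize}
\item every $(-1)$-curve is defined over $k$;
\item each contracted point is a $k$-point, since its geometric fiber is a single point and $k$ is perfect;
\item surjectivity on $\Pic$ passes to the minimal model $Z$, because over $k$ one has $\Pic(X)=\pi^*\Pic(Z)\oplus\bigoplus_i\mathbb Z E_i$.
\end{itemize}

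\emph{The zero-cycle step.} The step you were worried about does hold. On a smooth surface, the kernel of $(\operatorname{rk},\det)$ on $K_0$ is $F^2_{\mathrm{top}}K_0$, which is generated by classes of closed points, and $\chi(\cO_X,[\cO_y])=[k(y):k]$. But you do not need it. Descent of $\cO(1)$ handles $\mathbf P^2$, descent of $\cO(1,0)$ and $\cO(0,1)$ handles $\mathbf P^1\times\mathbf P^1$, and descent of the fiber class together with the $k$-defined negative section handles $\mathbf F_n$ with $n\ge2$.

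\emph{Comparison.} The paper's route is shorter and needs only numerical information, but it relies on an external theorem. Yours is self-contained and yields a more explicit conclusion. Namely, $X$ is obtained from $\mathbf P^2_k$, from $\mathbf P^1_k\times\mathbf P^1_k$, or from a Hirzebruch surface over $k$ by successive blowups at $k$-rational points.
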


In this corollary, exceptionality means $\RHom(E,E)\simeq k$. The assertion over perfect fields combines Theorem~\ref{thm:arbitrary-characteristic} with \resultcite{Theorem~3.7}{Via17}. The proof is given in Subsection~\ref{subsec:appendix-ground-fields}.

\subsection{Prior work over general fields}\label{subsec:appendix-literature}

The general-field analogue of Conjecture~\ref{conj:orlov} asks whether a smooth geometrically integral projective surface with a full $k$-exceptional collection is $k$-rational. Even when a surface is known to be geometrically rational, constructing a birational parametrization over $k$ is a further problem. The arithmetic literature also considers \emph{\'etale-exceptional} objects, for which $\RHom(E,E)\simeq L$ with $L/k$ a finite separable field extension \resultcite{Definition~2.2}{BDLM24}. A full collection of these objects is equivalent to categorical representability in dimension zero \resultcite{Lemma~10}{BD21}.

Brauer--Severi varieties provide a first test of the conjecture. Their derived categories have semiorthogonal decompositions into categories of modules over tensor powers of a central simple algebra \resultcite{Corollary~4.7}{Ber09}. Noncommutative motives show that a nonsplit Brauer--Severi variety admits no full \'etale-exceptional collection \resultcite{Theorem~3.1}{Rae16}. Thus a full $k$-exceptional collection forces a Brauer--Severi variety to be projective space. In particular, the conjecture holds for forms of $\mathbb P^2$ over every field.

For geometrically rational surfaces over perfect fields, a numerical rationality criterion is available. A numerically exceptional collection of maximal length implies $k$-rationality if the surface is minimal, or if the collection consists of numerical line bundles \resultcite{Theorem~3.7}{Via17}. The argument uses the unimodularity of the intersection lattice: every blowup in a morphism to a smooth minimal model must then have a $k$-rational center \resultcite{Proposition~2.3 and Lemma~3.8}{Via17}. This makes the numerical criterion particularly suited to the passage from geometric rationality to rationality over the ground field. The converse already fails for the blowup of $\mathbb P^2_k$ at a separable closed point of degree greater than one, which is $k$-rational but admits no numerically exceptional collection of maximal length \resultcite{Remark~3.9}{Via17}.

A complementary approach studies the components of the derived category of a del Pezzo surface. Over a perfect field, a del Pezzo surface of Picard rank one is $k$-rational precisely when it is categorically representable in dimension zero \resultcite{Theorem~1}{AB18}. For degree at least five, the same criterion holds without a Picard-rank restriction and is also equivalent to the existence of a $k$-point. In this range, explicit decompositions describe the relevant components by semisimple algebras and their Brauer classes \resultcite{Theorems~1 and~2}{AB18}. For conic bundle surfaces, the natural decomposition involving the even Clifford algebra is used to construct a Griffiths--Kuznetsov component \resultcite{\S3.2, Definition~12, and Theorem~1}{BD21}. Together with the del Pezzo case, this gives a categorical birational invariant for geometrically rational surfaces over perfect fields \resultcite{Theorem~2}{BD21}.

Galois descent supplies collections on many forms of toric varieties. A full Galois-stable exceptional collection over a finite Galois extension descends to a full collection over $k$ when finite-dimensional division algebras are allowed as endomorphism algebras \resultcite{Definition~2.2 and Theorem~1.3}{BDM19}. In particular, every smooth projective toric surface over $k$ admits a full exceptional collection of sheaves in this sense \resultcite{Proposition~4.7}{BDM19}. The descended endomorphism algebras retain arithmetic information that disappears over a splitting field.

For full $k$-exceptional collections, rationality has been proved for smooth projective arithmetic toric varieties of any dimension having a $k$-point \resultcite{Theorem~1}{BDLM24}. The proof passes from the collection to the trivial Galois action on geometric $K_0$ and the Picard group, and then to rationality of the dense torus \resultcite{\S5.1}{BDLM24}. The \'etale-exceptional version has a different outcome in dimension three: geometrically rational smooth projective threefolds over $\mathbb Q$ have been constructed with full \'etale-exceptional collections that are nonrational despite having a $\mathbb Q$-point, and also with no $\mathbb Q$-point at all \resultcite{Theorems~2 and~3}{BDLM24}.

For the line-bundle case considered here, Theorem~\ref{thm:arbitrary-characteristic} supplies the geometric rationality needed in the numerical criterion above, in every characteristic. Base change therefore gives geometric rationality over an arbitrary field, and \resultcite{Theorem~3.7}{Via17} then gives $k$-rationality over a perfect field. Subsection~\ref{subsec:appendix-ground-fields} proves Corollary~\ref{cor:arbitrary-ground-field} by this argument.

In Subsections~\ref{subsec:appendix-divisors}--\ref{subsec:appendix-proof}, $k$ is algebraically closed. All cohomology and derived Hom groups are taken over $k$, and the notation and shift convention of Section~\ref{sec:preliminaries} are retained. In particular, an exceptional line bundle gives
\begin{equation}\label{eq:appendix-structure}
H^0(X,\cO_X)=k,\qquad H^1(X,\cO_X)=H^2(X,\cO_X)=0.
\end{equation}

\subsection{Line bundles on effective left-orthogonal divisors}\label{subsec:appendix-divisors}

The proofs of Lemmas~\ref{lem:3-1}--\ref{lem:3-2} use restriction sequences, Riemann--Roch, normalization, and intersection theory, and apply over $k$ without change. The same is true of the reduced-support and node-gluing arguments in Lemma~\ref{lem:3-3}: the reduced support of an effective left-orthogonal divisor is a nodal tree of projective lines, and its Picard group is determined by component degrees. The restriction and normalization arguments of \resultcite{Lemma~4.1, Proposition~4.3, and the proof of Theorem~4.5}{Ela16}, stated there in characteristic zero, apply over $k$.

For the nonreduced step, we also use the quotient argument of Lemma~\ref{lem:3-1} for an arbitrary closed subscheme $C\subseteq D$. The kernel of $\cO_D\to\cO_C$ is supported in dimension at most one, so $H^1(C,\cO_C)=0$. The finite logarithm in Lemma~\ref{lem:3-3} is the only step in that proof that needs replacement. We give a Picard-scheme argument for it.

\begin{lemma}\label{lem:appendix-picard}
Let $D>0$ be an effective left-orthogonal divisor, with reduced components $C_1,\ldots,C_v$. Restriction and multidegree induce isomorphisms
\begin{equation}\label{eq:appendix-picard}
\Pic(D)\xrightarrow{\sim}\Pic(D_{\mathrm{red}})
\xrightarrow{\sim}\mathbb Z^v.
\end{equation}
If $\cN\in\Pic(D)$ has nonnegative degree on every $C_i$, it has a section that is a nonzerodivisor on $D$. For every $\cM\in\Pic(D)$, it follows that
\begin{equation}\label{eq:appendix-twist-vanishing}
H^1(D,\cM)=0\quad\Longrightarrow\quad H^1(D,\cM\otimes\cN)=0.
\end{equation}
\end{lemma}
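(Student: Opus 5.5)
The plan is to keep the reduced part of Lemma~\ref{lem:3-3} unchanged. Its node-gluing argument shows $\Pic(D_{\mathrm{red}})\xrightarrow{\sim}\mathbb Z^v$ over $k$. The only new work is the first isomorphism in \eqref{eq:appendix-picard}. For that I would filter $D$ by a chain of closed subschemes
\[
D_{\mathrm{red}}=D^{(0)}\subset D^{(1)}\subset\cdots\subset D^{(n)}=D,
\]
where each step has square-zero ideal $\cI_\nu=\ker(\cO_{D^{(\nu)}}\to\cO_{D^{(\nu-1)}})$. The powers of the nilradical give such a chain, and so does adding one reduced component at a time, $D^{(\nu)}=D^{(\nu-1)}+C_{i_\nu}$. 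For a square-zero ideal, $1+b\mapsto b$ is an isomorphism of abelian sheaves $1+\cI_\nu\simeq\cI_\nu$ in every characteristic, because $(1+b)(1+c)=1+b+c$. This replaces the finite logarithm. The units sequence then gives the exact sequence
\[
H^1(\cI_\nu)\to\Pic(D^{(\nu)})\to\Pic(D^{(\nu-1)})\to H^2(\cI_\nu)=0.
\]
So it suffices to prove $H^1(D^{(\nu)},\cI_\nu)=0$. If that fails, I would instead phrase the argument through the Picard scheme. There $\Pic^0_{D/k}$ has tangent space $H^1(D,\cO_D)=0$, so $\Pic_{D/k}$ is étale over $k$. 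Restriction to $D_{\mathrm{red}}$ induces a homeomorphism, since the kernel is unipotent, and étale plus $k$ algebraically closed then gives a bijection on $k$-points.

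The vanishing $H^1(\cI_\nu)=0$ is the step I expect to be the main obstacle. Here is what I would try first. We have the exact sequence $0\to\cI_\nu\to\cO_{D^{(\nu)}}\to\cO_{D^{(\nu-1)}}\to0$. The appendix remark extending Lemma~\ref{lem:3-1} to arbitrary closed subschemes gives $H^1(\cO_{D^{(\nu)}})=0$. In addition, $H^0(\cO_{D^{(\nu)}})\to H^0(\cO_{D^{(\nu-1)}})$ is surjective, since both sides are $k$, because each $D^{(\nu)}$ is an effective subdivisor of $D$ and \eqref{eq:3-1} applies. Hence $H^1(\cI_\nu)=0$. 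This requires choosing the chain through effective divisors $B\le D$, that is, adding one component at a time, so that \eqref{eq:3-1} gives $h^0(\cO_B)=1$. For such a step, $\cI_\nu\simeq\cO_{C}(-D^{(\nu-1)})$ is square-zero. To get $h^0(\cO_B)=1$ for each intermediate $B$, I would order the components as in Lemma~\ref{lem:3-3}'s tree argument. Alternatively, I would note that $h^0(\cO_B)=-\tfrac12(B^2+K_XB)$ together with $H^1(\cO_B)=0$ suffices as long as the sequence stays connected. If some intermediate $B$ has $h^0>1$, I would fall back on the Picard-scheme argument above.

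For the remaining assertions I would copy Lemma~\ref{lem:3-3} verbatim. The divisors $Z_i$ supported at points $z_i$ are defined by a local parameter $v$, and $v$ is a nonzerodivisor on $k[[u,v]]/(u^{m_i})$ in any characteristic. They satisfy $\deg(\cO_D(Z_i)|_{C_j})=\delta_{ij}$. By \eqref{eq:appendix-picard}, $\cN\simeq\cO_D(\sum n_iZ_i)$, and its defining section is a nonzerodivisor. The twisting sequence $0\to\cM\to\cM\otimes\cN\to(\cM\otimes\cN)|_Z\to0$, with $Z$ zero-dimensional, then yields \eqref{eq:appendix-twist-vanishing}.
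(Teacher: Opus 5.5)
\textbf{The gap is the step $H^1(\cI_\nu)=0$ in your main route.} You read \eqref{eq:3-1} as giving $h^0(\cO_B)=1$ for every effective $B\le D$. It only gives $h^0(\cO_B)=-\tfrac12(B^2+K_XB)\ge1$, and this can exceed $1$ for connected nonreduced $B$.

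For example, let $C$ be an end curve of the support tree with $K_XC=0$ and multiplicity at least two in $D$. This happens for the fundamental cycle of an $E_8$ configuration of $(-2)$-curves, whose end curves have multiplicities $2,2,3$. Then $C\cdot D_{\mathrm{red}}=-1$, and $B=D_{\mathrm{red}}+C$ has $h^0(\cO_B)=3$. Neither the tree argument of Lemma~\ref{lem:3-3}, which only concerns the reduced support, nor connectedness rules this out.

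Such a step cannot be compensated later. We have $\chi(\cO_{D^{(\nu)}})-\chi(\cO_{D^{(\nu-1)}})=1-C_{i_\nu}\cdot D^{(\nu-1)}$, and these differences sum to $\chi(\cO_D)-\chi(\cO_{D_{\mathrm{red}}})=0$. So a step raising $\chi$ forces a later step with $C_{i_\nu}\cdot D^{(\nu-1)}\ge2$, and there $H^1(\cI_\nu)=H^1(\mathbf P^1,\cO(-C_{i_\nu}\cdot D^{(\nu-1)}))\ne0$. An arbitrary ordering therefore fails, and you need a lemma that produces a good one.

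\textbf{Such an ordering exists.} Suppose $D_{\mathrm{red}}<B=\sum_i m_iC_i\le D$ with $h^0(\cO_B)=1$. For each $i$ with $m_i\ge2$, set $\phi_i=C_i\cdot(B-C_i)$. The restriction sequence, together with $H^1(\cO_{B-C_i})=0$, gives $h^0(\cO_{B-C_i})=\phi_i\ge1$. Using $C_i^2+K_XC_i=-2$, $B^2+K_XB=D_{\mathrm{red}}^2+K_XD_{\mathrm{red}}=-2$, and \eqref{eq:3-1} for $B-D_{\mathrm{red}}$, one gets
\[
 \sum_i(m_i-1)\phi_i=2\sum_i(m_i-1)-h^0\bigl(\cO_{B-D_{\mathrm{red}}}\bigr).
\]
Hence some $\phi_i$ equals $1$. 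Descending induction then gives a chain from $D_{\mathrm{red}}$ to $D$ with $h^0=1$ at every stage. With this lemma your first route is a complete proof that avoids Picard schemes entirely. That would be a genuinely more elementary alternative to the paper.

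\textbf{Your fallback is essentially the paper's argument, but the stated reason is insufficient.} Unipotence of the kernel of $\Pic_{D/k}\to\Pic_{D_{\mathrm{red}}/k}$ does not force that kernel to be trivial inside an \'etale group scheme. In characteristic $p$, the constant group $\mathbb Z/p$ is \'etale and unipotent. What is needed is connectedness.

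The paper gets connectedness step by step along the filtration by powers of the nilradical:
\begin{itemize}
\item every intermediate $C$ has $H^1(C,\cO_C)=0$, so every $\Pic_{C/k}$ is discrete and reduced;
\item the sequence $\mathbf V(H^1(C,\cJ))\to\Pic_{C'/k}\to\Pic_{C/k}\to0$ is exact;
\item a homomorphism from a connected vector group to a discrete reduced group scheme is zero.
\end{itemize}
This works for any square-zero filtration and never needs $H^1(\cJ)=0$. The cost is the use of representability and the tangent-space computation of Picard schemes.

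Your treatment of the reduced isomorphism and of the nonzerodivisor and twisting assertions agrees with the paper.
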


\begin{proof}
Every intermediate closed subscheme $D_{\mathrm{red}}\subseteq C\subseteq D$ has $H^1(C,\cO_C)=0$, as observed above. Its Picard scheme exists and is locally of finite type by \resultcite{Corollary~4.18.3}{Kle05}, and its tangent space at the identity is $H^1(C,\cO_C)$ by \resultcite{Theorem~5.11}{Kle05}. Hence this Picard scheme is discrete and reduced. Indeed, its local ring at the identity has zero cotangent space, so its maximal ideal is zero by Nakayama's lemma. Translation gives the same conclusion at every closed point. Since the scheme is locally of finite type over the algebraically closed field $k$, these points exhaust its zero-dimensional components.

Filter the nilradical of $\cO_D$ by its powers. Each successive step is a square-zero immersion $C\subset C'$ with ideal $\cJ$. The exact sequence
\[
1\longrightarrow1+\cJ\longrightarrow\cO_{C'}^*
\longrightarrow\cO_C^*\longrightarrow1
\]
identifies $1+\cJ$ with the additive sheaf $\cJ$; this uses only $\cJ^2=0$. Its cohomology sequence is \resultcite{Tag~0C6R}{Stacks}. Sheafifying in the Picard functor gives an exact sequence of fppf sheaves
\begin{equation}\label{eq:appendix-square-zero}
\mathbf V\bigl(H^1(C,\cJ)\bigr)\longrightarrow
\Pic_{C'/k}\longrightarrow\Pic_{C/k}\longrightarrow0.
\end{equation}
Here $\mathbf V(W)$ denotes the additive vector group associated with a finite-dimensional $k$-vector space $W$. Over an affine test scheme $\operatorname{Spec}A$, the cohomology of $\cJ_A$ is $H^i(C,\cJ)\otimes_k A$. The obstruction group $H^2(C,\cJ)$ vanishes because $C$ is one-dimensional. The long exact sequence therefore gives \eqref{eq:appendix-square-zero}.

A homomorphism from a connected vector group to the discrete reduced group scheme $\Pic_{C'/k}$ is zero. Thus \eqref{eq:appendix-square-zero} makes restriction an isomorphism. Iteration proves the first map in \eqref{eq:appendix-picard}. Over an algebraically closed field, the points of these Picard schemes are the ordinary Picard groups. The characteristic-independent node-gluing argument of Lemma~\ref{lem:3-3} proves the second map in \eqref{eq:appendix-picard}.

With the multidegree isomorphism established, the final Cartier-divisor construction and cohomology sequence of Lemma~\ref{lem:3-3} apply over $k$ without change. They give the nonzerodivisor section of $\cN$ and the implication \eqref{eq:appendix-twist-vanishing}.
\end{proof}

\begin{proposition}\label{prop:appendix-multiplication}
Lemmas~\ref{lem:3-1}--\ref{lem:3-2}, Proposition~\ref{prop:3-4}, and Proposition~\ref{prop:3-8} hold over an algebraically closed field of arbitrary characteristic. In particular, under the canonical-degree condition \eqref{eq:3-3}, $R=A^0$ is a product of path algebras of linearly oriented chains, and $A^1$ is projective as both a left and a right $R$-module.
\end{proposition}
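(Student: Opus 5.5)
The plan is to go through the chain of results in order. At each step I would identify exactly where the field $\mathbb C$ was used, and replace that input with something valid over an algebraically closed field $k$. Only Lemma~\ref{lem:3-3} relies on characteristic zero, through the finite logarithm; everything else uses base-field-free tools.

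For Lemma~\ref{lem:3-1}, I would rerun the proof verbatim over $k$. It uses the restriction sequence $0\to\cO_X(-D)\to\cO_X\to\cO_D\to0$ together with \eqref{eq:appendix-structure}. It also uses Riemann--Roch on a surface with $\chi(\cO_X)=1$, the normalization sequence of an integral curve (with $p_a=g+\operatorname{length}Q$), adjunction for curves on a smooth surface, and the idempotent argument for connectedness. None of these depends on the characteristic. The only place where $\mathbb C$ appears is $\dim_{\mathbb C}$, which becomes $\dim_k$, and $h^0(\cO_C)=1$ for integral proper $C$ over algebraically closed $k$. For Lemma~\ref{lem:3-2}, the argument uses the same ingredients: if $|D|$ moves, two general members of the moving part share no component, and this holds over any algebraically closed $k$ because a positive-dimensional linear system without fixed part has members avoiding any given finite set of curves. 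The rest of that proof is pure intersection arithmetic with \eqref{eq:2-3} and \eqref{eq:3-1}.

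For Proposition~\ref{prop:3-4}, I would substitute Lemma~\ref{lem:appendix-picard} for Lemma~\ref{lem:3-3}. The implication \eqref{eq:appendix-twist-vanishing}, applied with $\cM=\cO_D(-A)$ and $\cN=\omega_X|_D$, gives $H^1(D,\cO_D(K_X-A))=0$. The adjunction $\omega_D=\cO_D(K_X+D)$ and Serre duality on the Gorenstein curve $D$ (Stacks references that are valid over any field) then give $H^0(D,\cO_D(A+D))=0$, and the cohomology sequence finishes the argument exactly as before.

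For Proposition~\ref{prop:3-8}, I would check each ingredient over $k$. In step (i), the generic-point argument works over any integral $X$. Steps (ii) and (iii) use only Lemma~\ref{lem:3-2}. The completeness of connected components of $\Gamma$ is combinatorial. The normalization of Hom generators needs a closed point outside finitely many curves, which exists over algebraically closed $k$. This yields $R\simeq\prod_\Lambda kQ_m$. The representation-theoretic Lemmas~\ref{lem:3-5}--\ref{lem:3-7} are linear algebra over an arbitrary field. The injectivity of the arrows $f_r^{\Lambda,t}$ and of the precomposition maps comes from Proposition~\ref{prop:3-4} as just established, so both left and right projectivity of $A^1$ follow.

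The main obstacle I anticipate is auditing the nonreduced Picard step. In Lemma~\ref{lem:3-3}, the final construction of the Cartier divisors $Z_i$ must be checked to be characteristic-free, since it uses only regular parameters and the completed local ring $k[[u,v]]/(u^{m_i})$. The finite logarithm must be cleanly excised and replaced by the isomorphism $\Pic(D)\simeq\Pic(D_{\mathrm{red}})$ from Lemma~\ref{lem:appendix-picard}. Once that lemma is in place, the remaining verifications are routine.
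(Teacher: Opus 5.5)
Your proposal is correct and matches the paper's proof: you rerun Lemmas~\ref{lem:3-1}--\ref{lem:3-2} and Propositions~\ref{prop:3-4} and~\ref{prop:3-8} unchanged over $k$, replacing only the finite-logarithm step of Lemma~\ref{lem:3-3} by Lemma~\ref{lem:appendix-picard}, applied with $\cM=\cO_D(-A)$ and $\cN=\omega_X|_D$, in the proof of Proposition~\ref{prop:3-4}. One wording fix: in Lemma~\ref{lem:3-2} the general member must not \emph{contain} a given curve, rather than avoid it; this holds because the members containing a non-fixed curve form a proper linear subspace and $k$ is infinite.
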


\begin{proof}
Lemmas~\ref{lem:3-1}--\ref{lem:3-2} were addressed above. In the proof of Proposition~\ref{prop:3-4}, replace Lemma~\ref{lem:3-3} by Lemma~\ref{lem:appendix-picard}. Every other step is unchanged: restriction sequences, Cartier-divisor adjunction, and Serre duality on the Gorenstein curve $D$ are valid over $k$; see \resultcite{Tags~0AA4 and~0BS2(5)}{Stacks}.

The proof of Proposition~\ref{prop:3-8} now applies with these inputs. Its graph argument and the normalization of Hom generators at a point outside finitely many divisors work over any algebraically closed field. The chain-representation arguments of Lemmas~\ref{lem:3-5}--\ref{lem:3-7} are linear algebra over any field; their sources are \resultcite{\S1, pp.~4 and~7--8}{CB92} and \resultcite{Example~4.2(i) and Theorem~5.4}{LZ11}. Thus the same proof gives the asserted description of $R$ and projectivity of $A^1$ on both sides.
\end{proof}

\subsection{The height obstruction}\label{subsec:appendix-height}

\begin{proposition}\label{prop:appendix-height}
Let $Y$ be a smooth connected projective surface over $k$, and let $\cE=(\cL_1,\ldots,\cL_\gamma)$ be a nonempty exceptional collection of line bundles. Assume that $K_Y$ has nonnegative degree on every prime component of the zero divisor of every nonzero forward morphism. Then
\[
\NHH^u(\cE,Y)=0\qquad(u<1).
\]
In particular, $\cE$ is not full. This applies whenever $K_Y$ is nef.
\end{proposition}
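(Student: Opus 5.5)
The plan is to repeat the proof of Proposition~\ref{prop:3-16} over $k$, checking that each input is either characteristic-free or has already been replaced. First, exceptionality of a line bundle gives \eqref{eq:appendix-structure}, so the graded Ext algebra is $A=A^0\oplus A^1\oplus A^2$, with $e_jA^qe_i=\Ext^q(\cL_i,\cL_j)$ and $e_jAe_i=0$ for $j<i$. This makes $A$ a finite graded directed algebra over $k$ in the sense of Subsection~\ref{subsec:bar-height}. The normal bimodule $T=H^*(\mathcal T)$ is computed from \eqref{eq:2-4} and Serre duality \eqref{eq:2-1}, which hold over any field:
\[
 e_iT^te_j=H^{t-2}(Y,\cL_j^\vee\otimes\cL_i\otimes\omega_Y^{-1})=0\qquad(t<2),
\]
so $T^t=0$ for $t<2$.

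Next, under the canonical-degree hypothesis, which is exactly \eqref{eq:3-3}, Proposition~\ref{prop:appendix-multiplication} shows two things. First, $R=A^0$ is a product of path algebras of linearly oriented chains over $k$. Second, $A^1$ is projective as a left $R$-module.

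The algebraic part, Lemmas~\ref{lem:3-9}--\ref{lem:3-15}, uses only the following ingredients:
\begin{itemize}
\item linear algebra over a field;
\item the normalized relative bar resolution over the semisimple algebra $S_0=\bigoplus_i k e_i$;
\item the hereditary resolutions \eqref{eq:3-36} and \eqref{eq:3-40} for path algebras;
\item truncation and finite filtrations.
\end{itemize}
None of these depend on the characteristic, so I will state that those lemmas hold verbatim with $\mathbb C$ replaced by $k$. Applying Lemma~\ref{lem:3-15} with $t_0=2$ gives $H^uC(A,T)=0$ for $u<1$.

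By \eqref{eq:2-7}, $C(A,T)$ is the $E_1$ page with the first differential. Hence $E_2^{-p,q}=0$ whenever $q-p<1$. The spectral sequence comes from the finite bar-length filtration of Kuznetsov's bar model \eqref{eq:2-6}, so it converges, and therefore $\NHH^u(\cE,Y)=0$ for $u<1$. The one point to verify here is that Kuznetsov's constructions (\resultcite{Lemma~3.6, Proposition~3.7, Theorem~3.3}{Kuz15}) are valid over an arbitrary field. They are formal statements about dg enhancements and semiorthogonal decompositions, so they should be.

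Non-fullness then follows from Proposition~\ref{prop:2-2} over $k$. If $\cE$ were full, the localization triangle \eqref{eq:2-5} would give $N_Y(\cE)\simeq\HH(Y)$, hence $\NHH^0(\cE,Y)=\HH^0(Y)$. Over $k$ we do not need the Hochschild--Kostant--Rosenberg decomposition, which can fail in small characteristic. It suffices to use $\HH^0(Y)=\Hom(\cO_\Delta,\cO_\Delta)=H^0(Y,\cO_Y)=k\neq0$, together with $\HH^{<0}=0$, which holds because $\cO_\Delta$ is a sheaf. This contradicts the vanishing for $u=0$. Finally, if $K_Y$ is nef, the hypothesis holds automatically for every component.

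I expect the main obstacle to be confirming characteristic-independence of the Hochschild input: the identification $\HH^0(Y)=k$ and the validity of the localization triangle over $k$. I will handle it by the direct computation of $\Ext^0(\cO_\Delta,\cO_\Delta)$ above rather than by appealing to HKR.
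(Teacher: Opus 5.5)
Your proposal is correct and follows essentially the same route as the paper: it transports the proof of Proposition~\ref{prop:3-16} to $k$ via Proposition~\ref{prop:appendix-multiplication}, the characteristic-free Lemmas~\ref{lem:3-9}--\ref{lem:3-15}, and $T^t=0$ for $t<2$. It then replaces the Hochschild--Kostant--Rosenberg input by the direct computation $\HH^0(Y)=\Hom_{Y\times Y}(\cO_\Delta,\cO_\Delta)=H^0(Y,\cO_Y)=k$, exactly as the paper does; your extra remark that $\HH^{<0}(Y)=0$ is true but unnecessary, since the contradiction only uses degree zero.
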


\begin{proof}
Normal Hochschild cohomology, its localization triangle, and the finite directed bar spectral sequence are defined over $k$; see \resultcite{Theorem~3.3, Lemma~3.6, and Proposition~3.7}{Kuz15}. The proofs of Lemmas~\ref{lem:3-9}--\ref{lem:3-15} are independent of characteristic: the idempotent algebra is the split algebra $S_0=\bigoplus_i ke_i$, and the bar resolutions and their contracting homotopies use no division. The changes of signs in Lemma~\ref{lem:3-10} remain valid in characteristic two. Use Proposition~\ref{prop:appendix-multiplication} in place of Proposition~\ref{prop:3-8}. The proof of Proposition~\ref{prop:3-16} then gives the stated vanishing, since the inverse-Serre coefficient bimodule still satisfies $T^t=0$ for $t<2$ by \eqref{eq:2-1}.

For the implication to nonfullness, replace the Hochschild--Kostant--Rosenberg argument in Proposition~\ref{prop:2-2} by the diagonal-kernel calculation
\begin{equation}\label{eq:appendix-hh-zero}
\HH^0(Y)=\Hom_{Y\times Y}(\cO_\Delta,\cO_\Delta)
=H^0(Y,\cO_Y)=k.
\end{equation}
If the collection were full, the localization triangle \resultcite{Theorem~3.3}{Kuz15} would identify its normal Hochschild cohomology with $\HH(Y)$, contradicting the vanishing in degree zero.
\end{proof}

\subsection{Geometric generic configurations of centers}\label{subsec:appendix-generic-centers}

The avoidance of all rational curves in Subsection~\ref{subsec:general-centers} uses characteristic zero and the uncountability of $\mathbb C$. Over $k$, we impose the incidence condition of Lemma~\ref{lem:appendix-generic-avoidance} on every partial blowup by passing to the geometric generic point of the configuration space.

Let $S$ be a smooth connected projective surface over $k$ with $K_S$ nef. For $r\geq1$, let $\Omega$ be an algebraic closure of $k(\Conf_r(S))$, and let $(p_1,\ldots,p_r)$ be the corresponding geometric generic configuration on $S_\Omega$. For $I\subseteq\{1,\ldots,r\}$ set
\begin{equation}\label{eq:appendix-partial-blowups}
Y_I=\operatorname{Bl}_{\{p_i\,\mid\,i\in I\}}S_\Omega,
\qquad Y_\varnothing=S_\Omega,
\end{equation}
with exceptional curves $E_i$ for $i\in I$. Nefness persists after extending the algebraically closed ground field. One can check this by spreading a curve to a finite-type parameter scheme and specializing to a closed $k$-point: flatness preserves its degree against $K_S$, so a negative degree after extension would give a negative degree over $k$.

\begin{lemma}\label{lem:appendix-generic-avoidance}
For every $I$ and every $a\in I$, no nonexceptional smooth rational curve $C\subset Y_I$ satisfies $CE_a=1$.
\end{lemma}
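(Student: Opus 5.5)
The plan is to argue by contradiction. From a bad curve $C$ we extract a covering family of rational curves on $S$ and then contradict the nefness of $K_S$.

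Let $\pi_I\colon Y_I\to S_\Omega$ be the blowup and suppose $C\subset Y_I$ is a nonexceptional smooth rational curve with $CE_a=1$. Its image $C_0=\pi_I(C)$ is an integral rational curve on $S_\Omega$, and $C$ is its strict transform. For $b\in I$ the multiplicity of $C_0$ at $p_b$ is $m_b=CE_b\ge0$; in particular $C_0$ passes through $p_a$ and is smooth there. Write $\nu\colon\mathbf P^1\simeq C\to C_0\subset S_\Omega$ for the normalization.

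First, spread out. The curve $C_0$ and the map $\nu$ are defined over a finite extension $L$ of $K=k(\Conf_r(S))$. Choose a variety $V$ with $k(V)=L$, finite and dominant over a dense open subset of $\Conf_r(S)$, together with a family $\nu_V\colon\mathbf P^1_V\to S\times V$ of rational curves and a section $\sigma\colon V\to\mathbf P^1_V$ such that $\nu_V\circ\sigma$ is the $a$th coordinate of the configuration. The composite $V\to\Conf_r(S)\xrightarrow{\operatorname{pr}_a}S$ is dominant, so the evaluation map $\operatorname{ev}\colon\mathbf P^1\times V\to S$ is dominant. Hence $S$ is covered by a family of integral rational curves, all numerically equivalent to the specializations of $C_0$. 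Numerical invariants are preserved by flatness, as in the nefness remark preceding the lemma, so we may compute $K_S\cdot C_0$ and related quantities on closed fibres.

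Second, use the geometric hypothesis to get separability. At the point $(z_a,\eta)$ over the generic point $\eta$ of $V$, where $z_a=\sigma(\eta)$ maps to $p_a$, the differential of $\operatorname{ev}$ restricted to $T_{z_a}\mathbf P^1$ is nonzero, because $C_0$ is smooth at $p_a$ and $\nu$ is unramified there ($m_a=1$). The moving directions come from $V$, and along the section $\sigma$ they realize all tangent directions at the generic point $p_a$, since $\operatorname{pr}_a\circ(V\to\Conf_r)$ is generically \'etale onto its dominant image after shrinking $V$. I will try to show that these two contributions span $T_{p_a}S_\Omega$. If so, $\operatorname{ev}$ is generically smooth, the family is a separable dominating family of rational curves, and the standard deformation estimate gives a free rational curve: $\nu^*T_S$ is globally generated, so its splitting type is $\cO(a_1)\oplus\cO(a_2)$ with $a_1\ge2$ (from $T_{\mathbf P^1}\hookrightarrow\nu^*T_S$) and $a_2\ge0$. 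Therefore $-K_S\cdot C_0=a_1+a_2\ge2$, contradicting the nefness of $K_S$ on $S_\Omega$.

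The main obstacle is this separability claim. In positive characteristic, surfaces with nef $K_S$ can be unirational, so dominance alone gives nothing; the full rank of $d\operatorname{ev}$ must come from the hypotheses $m_a=CE_a=1$ and the genericity of $p_a$. My first attempt is to compare the rank of $d\operatorname{ev}$ along the section $\sigma(V)$, which surjects onto $T_{p_a}S$ through the $V$-directions because $p_a$ is a generic point, with the tangent direction of $C_0$ at $p_a$. If the section directions already span, separability follows directly. If the section map $V\to S$ turned out to be inseparable, I would replace $V$ by its reduction along Frobenius twists: this replaces $C_0$ by a curve with the same multiplicity-one property at the now separable generic point, and I would then redo the count. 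I would also record that the argument is uniform in $I$, since the other centers $p_b$ enter only through the nonnegative multiplicities $m_b$.
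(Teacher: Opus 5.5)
There is a genuine gap at the separability step. It cannot be closed from the hypotheses your argument actually uses.

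\textbf{Why the separability claim fails.} Away from $p_a$ you never use that $C$ is smooth. Everything rests on $C_0$ being a rational curve through the generic point $p_a$ with a single smooth branch there. In characteristic $p$, such curves exist on surfaces with nef canonical class.

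Let $S$ be the minimal model of a Zariski surface $z^p=f(x,y)$ of general type. It has the purely inseparable parametrization $\phi(u,v)=(u^p,v^p,g(u,v))$, where $g$ is $f$ with $p$-th roots taken of its coefficients, so that $g(u,v)^p=f(u^p,v^p)$. Consider the image of the line $t\mapsto(u_0+at,v_0+bt)$.
\begin{itemize}
\item Its derivative at $t=0$ is $(0,0,ag_u+bg_v)\neq0$ for general data.
\item Since $\phi$ is bijective on points, this image is a rational curve with one smooth branch at $\phi(u_0,v_0)$.
\end{itemize}
Take $\phi(u_0,v_0)=p_a$ over $\Omega$ and $I=\{a\}$. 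The strict transform is a nonexceptional rational curve with $CE_a=1$. It is singular elsewhere, so the lemma is not contradicted, but every hypothesis your argument uses holds for it.

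The separability claim indeed fails here. For the natural spreading-out, $k(V)=k(u_0,v_0)$ is a purely inseparable degree-$p$ extension of $k(S)$, so your assertion that $V\to S$ is generically \'etale after shrinking is false. Worse, the evaluation map factors through $\phi$, whose differential has rank one. Hence the tangent line of $C_0$ at $p_a$ and all moving directions lie in the same line.

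No Frobenius-twist reparametrization repairs this. A separably dominating family of rational curves would yield a free curve, and no free curve exists when $K_S$ is nef. Your route does work in characteristic zero, where generic \'etaleness is automatic and the argument is the usual non-uniruledness one. That is exactly the case the appendix does not need.

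\textbf{The missing idea.} You need to use the global smoothness of $C$ to obtain rigidity rather than separability; this is what the paper does.
\begin{enumerate}
\item Contract only $E_a$. The image $C'$ of $C$ on $Y_{I\setminus\{a\}}$ is a smooth rational curve through $p_a$: it has multiplicity $CE_a=1$ there and is unchanged elsewhere.
\item On such a blowup, every smooth rational curve is either exceptional, with square $-1$, or satisfies $K_{Y_{I\setminus\{a\}}}\cdot C'=K_S\cdot\pi_*C'+\sum_bE_bC'\ge0$, hence $C'^2\le-2$. In both cases $H^0(C',N_{C'})=0$.
\item Therefore the relative Hilbert scheme of smooth rational curves, in the family of blowups over $\Conf_{m-1}(S)$ with $m=|I|$, is quasi-finite over this $(2m-2)$-dimensional base.
\item Its universal curve has dimension at most $2m-1<2m=\dim\Conf_m(S)$, so the geometric generic new center lies on none of these curves.
\item Dominance of the coordinate projections and permutation of indices then give the statement for all $I$ and $a$.
\end{enumerate}
This tangent-space dimension count is unaffected by inseparability, which is why it works in characteristic $p$ where your evaluation-map argument cannot.
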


\begin{proof}
First consider the last point of a geometric generic configuration of size $m$. Put $B=\Conf_{m-1}(S)$, with $B=\operatorname{Spec}k$ if $m=1$, and let $\mathcal Z\to B$ be the family of blowups at its $m-1$ tautological disjoint sections. The blowup-family construction is the distinct-center case of \resultcite{Propositions~1.4--1.5}{Roe01}.

Every smooth rational curve $C$ on a geometric fiber $Z$ has negative self-intersection. If it is exceptional, its square is $-1$. Otherwise, with $\pi\colon Z\to S$ denoting the geometric base change of the blowup morphism,
\[
K_ZC=K_S\pi_*C+\sum_{i=1}^{m-1}E_iC\geq0,
\qquad C^2=-2-K_ZC\leq-2.
\]
Here and below $K_S$ denotes its pullback to the relevant field. Hence $H^0(C,N_{C/Z})=0$.

Choose a relatively ample line bundle. The relative Hilbert scheme has finite-type pieces for fixed Hilbert polynomial, and relative effective Cartier divisors form an open subfunctor; see \resultcite{Theorem~3.7 and Answer~3.8, p.~64}{Kle05}. In each piece, let $H$ parametrize smooth geometrically connected curves of genus zero. This is an open locus: properness of the universal family makes smoothness of the entire fiber an open condition, and on that locus connectedness and genus zero are equivalent to vanishing of the cone of $\cO\to Rg_*\cO$ for the universal curve $g$. This cone is perfect and commutes with base change \resultcite{Tag~0DJT}{Stacks}, so its vanishing is open by derived Nakayama \resultcite{Tag~0BCD}{Stacks}. The tangent space of a fiber of $H\to B$ at $[C]$ is
\[
H^0(C,N_{C/Z});
\]
this is the Hilbert-scheme tangent-space calculation in \resultcite{Remark~5.18, equation~(5.18.1)}{Kle05}. It vanishes by the preceding paragraph. Each fiber of $H\to B$ is consequently zero-dimensional. Since the Hilbert scheme for a fixed polynomial is of finite type, the map is quasi-finite and
\[
\dim H\leq\dim B=2m-2.
\]
Its universal curve therefore has dimension at most $2m-1$.

On this universal curve, restrict the marked point to the complement of the exceptional divisors and blow it down to $S$. Recording the preceding $m-1$ centers and this additional point gives an incidence morphism to $\Conf_m(S)$. The closure of its image has dimension at most $2m-1$, less than $\dim\Conf_m(S)=2m$. For each Hilbert polynomial this is a proper closed subset. A geometric generic point lies over the generic point of $\Conf_m(S)$ and belongs to none of these subsets. Thus, on the blowup at the preceding centers, no smooth rational curve passes through the new center.

Now suppose a nonexceptional smooth rational curve on the final blowup meets the last exceptional curve once. This is a single transverse intersection. After contracting the exceptional curve, its image has multiplicity one at the center and is smooth there; it is unchanged elsewhere. The image is therefore a smooth rational curve through the new center, contradicting the incidence exclusion.

Finally, every projection $\Conf_r(S)\to\Conf_I(S)$ is dominant. The image of our configuration is a geometric generic configuration for this smaller space, after a further extension of the function field. The exclusions just proved persist under that extension. Permuting indices makes any prescribed $a\in I$ the last point. This proves the assertion simultaneously for all $I$ and $a$.
\end{proof}

\begin{proposition}\label{prop:appendix-generic-divisors}
Suppose also that $H^1(S,\cO_S)=H^2(S,\cO_S)=0$. Every effective left-orthogonal divisor $D>0$ on $Y_I$ satisfies exactly one of the following:
\begin{enumerate}[label=\textup{(\roman*)}]
\item $D=E_a$ for some $a\in I$;
\item $D$ has no exceptional component, $K_{Y_I}$ is nonnegative on every component of $D$, and $\chi(\cO_{Y_I}(D))\leq0$.
\end{enumerate}
\end{proposition}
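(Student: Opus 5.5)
We argue as in Lemma~\ref{lem:3-24}, replacing the avoidance of all rational curves by the incidence condition of Lemma~\ref{lem:appendix-generic-avoidance}. Write $Y=Y_I$ and $\pi\colon Y\to S_\Omega$. By Lemma~\ref{lem:3-1}, which holds over $k$ (Proposition~\ref{prop:appendix-multiplication}), $\Supp D$ is connected. Its prime components are smooth rational curves, and by the proof of Lemma~\ref{lem:3-3}, transferred to the appendix, $D_{\mathrm{red}}$ is a nodal tree whose intersecting components meet with $C_iC_j=1$.

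\textbf{First case: $D$ has an exceptional component $E_a$.} Suppose $\Supp D$ also contains a nonexceptional component. Distinct exceptional curves are disjoint, so connectedness and the tree structure give a nonexceptional component $C$ with $CE_a=1$ for some $a$. This contradicts Lemma~\ref{lem:appendix-generic-avoidance}. Hence $\Supp D$ consists of exceptional curves only. Being connected and pairwise disjoint, these reduce to a single curve, so $D=mE_a$. Then \eqref{eq:2-3} gives $-m^2-m=-2$, hence $m=1$, which is alternative (i).

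\textbf{Second case: no component of $D$ is exceptional.} Let $C$ be a component. The curve $C$ is not equal to any $E_b$. Suppose $C$ meets some $E_b$. Intersection numbers on the tree-type configuration $C\cup E_b$ need not be $1$, so we instead use Lemma~\ref{lem:3-1}: the effective subdivisor $B=C+E_b$ satisfies $B^2+K_YB\le-2$, that is,
\[
(C^2+K_YC)+(E_b^2+K_YE_b)+2CE_b=-2-2+2CE_b\le-2,
\]
which forces $CE_b\le1$. Hence $CE_b\in\{0,1\}$, and $CE_b=1$ is excluded by Lemma~\ref{lem:appendix-generic-avoidance}. So $CE_b=0$ for all $b$. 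Using $K_Y=\pi^*K_S+\sum E_b$, we get $K_YC=K_S\cdot\pi_*C\ge0$, by nefness of $K_S$ after field extension (noted before the lemma). Because $D$ avoids the exceptional locus, $D=\pi^*D_0$ with $D_0=\pi_*D$. Then $R\pi_*\cO_Y=\cO_{S_\Omega}$ and the projection formula give
\[
\chi(\cO_Y(D))=\chi(\cO_{S_\Omega}(D_0)).
\]
Also $D_0$ is left-orthogonal, using $H^1=H^2=0$ for $\cO_S$ and hence for $\cO_{S_\Omega}$. Therefore \eqref{eq:2-3} gives $\chi=-K_SD_0\le0$. The two alternatives are exclusive, since (i) has an exceptional component and (ii) has none.

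\textbf{Main obstacle.} The delicate point is the meeting argument. The subdivisor trick bounds $CE_b\le1$ before Lemma~\ref{lem:appendix-generic-avoidance} is applied, and this is what makes that lemma, which only excludes $CE_a=1$, sufficient. We must also confirm that Lemma~\ref{lem:3-1} applies verbatim to reducible subdivisors over $\Omega$.
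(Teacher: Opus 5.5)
Your first case, where an exceptional component forces $D=E_a$, follows the paper's argument and is correct, and so is exclusivity. The gap is in your second case. There you apply \eqref{eq:3-1} to $B=C+E_b$. But in that case $E_b$ is, by hypothesis, not a component of $D$, so $B$ is not a subdivisor of $D$, and Lemma~\ref{lem:3-1} says nothing about it. Hence $CE_b\le1$ is unproved, and so are $CE_b=0$ and $D=\pi^*D_0$, which depend on it.

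This is not a cosmetic slip. Lemma~\ref{lem:appendix-generic-avoidance} excludes only $CE_a=1$. Consider a nonexceptional smooth rational curve $C$ with $CE_b\ge2$, for instance the strict transform of a rational curve singular at $p_b$. By the restriction sequence, every smooth rational curve on a surface with $p_g=q=0$ is an effective left-orthogonal divisor, so $D=C$ would be admissible. Nothing in the appendix rules out such curves in positive characteristic, where $S$ may be uniruled. This is why alternative (ii) is stated without the pullback description of Lemma~\ref{lem:3-24}.

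The repair is to drop the pullback altogether, which is what the paper does. For a nonexceptional component $C$ you have $E_bC\ge0$ simply because $C\ne E_b$. Nefness of $K_{S_\Omega}$ then gives $K_{Y_I}C=K_{S_\Omega}\pi_*C+\sum_bE_bC\ge0$. Apply \eqref{eq:2-3} on $Y_I$ itself, which has $\chi(\cO_{Y_I})=1$. This gives $\chi(\cO_{Y_I}(D))=-K_{Y_I}D\le0$ directly, with no need for $R\pi_*$ or for left-orthogonality of $\pi_*D$. In particular, Lemma~\ref{lem:appendix-generic-avoidance} is needed only in your first case, not in the second as your ``main obstacle'' paragraph suggests.
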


\begin{proof}
The structure-sheaf cohomology is unchanged by point blowups and field extension. For the blowup assertion one uses $R\pi_*\cO\simeq\cO$, the calculation in \resultcite{Lemma~4.1 and its proof}{Orl93}; compare also \resultcite{Proposition~4.1}{Lie13}. Thus the divisor arguments of Subsection~\ref{subsec:appendix-divisors} apply on $Y_I$.

If $D$ contains an exceptional curve $E_a$ and another component, the reduced-support argument of Lemma~\ref{lem:3-3}, valid here by Subsection~\ref{subsec:appendix-divisors}, gives a component $C$ neighboring $E_a$ in the nodal tree, with $CE_a=1$. Since the exceptional curves are disjoint, $C$ is nonexceptional. It is a smooth rational curve, contrary to Lemma~\ref{lem:appendix-generic-avoidance}. If $E_a$ is the only component, write $D=nE_a$. The identity $D^2+K_{Y_I}D=-2$ gives $-n^2-n=-2$, and hence $n=1$.

Otherwise there is no exceptional component. For every component $C$,
\[
K_{Y_I}C=K_{S_\Omega}\pi_*C+\sum_{a\in I}E_aC\geq0.
\]
Finally, \eqref{eq:2-3} gives $\chi(\cO_{Y_I}(D))=-K_{Y_I}D\leq0$.
\end{proof}

This proposition supplies the canonical-degree and Euler-characteristic inequalities used in the extraction and adjacency arguments of Subsection~\ref{subsec:exceptional-pair}.

\subsection{Deforming the collection to the generic configuration}\label{subsec:appendix-deformation}

\begin{lemma}\label{lem:appendix-fullness}
Let $f\colon\mathcal X\to B$ be smooth and projective over a locally noetherian base, with geometrically connected fibers, and let $\widetilde{\cL}_1,\ldots,\widetilde{\cL}_\gamma$ be line bundles on $\mathcal X$. If their restrictions are full and exceptional at a geometric point of $B$, the same is true at every geometric point of an open neighborhood. A full exceptional collection of line bundles also remains full and exceptional after any extension of its ground field.
\end{lemma}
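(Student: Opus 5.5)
I will adapt the proof of Proposition~\ref{prop:3-19} to a locally noetherian base and to geometric points. Exceptionality is handled first. Define $K_0=\Cone(\cO_B\to Rf_*\cO_{\mathcal X})$ and $K_{ji}=Rf_*(\widetilde{\cL}_j^\vee\otimes\widetilde{\cL}_i)$ for $j>i$, exactly as in \eqref{eq:3-41}. Since $f$ is smooth and projective, these complexes are perfect and commute with arbitrary derived base change \resultcite{Tag~0DJT}{Stacks}. For a geometric point $\bar b\colon\operatorname{Spec}\Omega\to B$ over $b$, base change gives $L\bar b^*K=Li_b^*K\otimes_{k(b)}\Omega$. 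This vanishes if and only if $Li_b^*K=0$, which by derived Nakayama \resultcite{Tag~0BCD}{Stacks} happens exactly when $b\notin\Supp K$. Geometric connectedness gives $H^0(X_{\bar b},\cO)=\Omega$, and the computation in \eqref{eq:3-42}--\eqref{eq:3-44} then carries over. So the open set $U$ of \eqref{eq:3-45} consists precisely of the points whose geometric fibers carry an exceptional collection.

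Fullness is handled next. Over $U$ I will form the relative kernels $C_i=\Cone(p_1^*\widetilde{\cL}_i^\vee\otimes p_2^*\widetilde{\cL}_i\to\cO_\Delta)$ on $\mathcal X_U\times_U\mathcal X_U$, and let $C$ be the convolution kernel of $\Phi_{C_1}\circ\cdots\circ\Phi_{C_\gamma}$. Its formation commutes with base change: the diagonal is a regular immersion, the projections are flat and proper, and Tag~0DJT applies again. Hence $C$ is perfect, and its derived restriction to $X_{\bar b}\times X_{\bar b}$ is the projection kernel of Lemma~\ref{lem:3-18} for the fiber collection. Lemmas~\ref{lem:3-17} and~\ref{lem:3-18} are formal statements over any field, since derived Nakayama at closed points works over $\Omega$. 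They show that fullness at $\bar b$ is equivalent to $C_{\bar b}=0$. Fiberwise derived Nakayama reduces this to $C$ vanishing along the whole fiber over $b$, that is, $b\notin q(\Supp C)$. Properness of $q$ makes $U'=U\setminus q(\Supp C)$ open. It contains $b$ by hypothesis, and every geometric point of $U'$ carries a full exceptional collection.

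The field-extension statement is the special case $B=\operatorname{Spec}k$ of the same criteria. For an extension $k\subset K$, the collection is exceptional over $K$ because $R\Gamma$ commutes with flat base change. The kernel $C_K$ is the pullback of $C$, and it vanishes because $C=0$, which was already checked over $k$.

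The main obstacle I expect is the claim that convolution commutes with arbitrary base change over a non-reduced, merely locally noetherian base. Here I must check carefully that each step (pullback, derived tensor, and proper flat pushforward along $\mathcal X\times_B\mathcal X\times_B\mathcal X\to\mathcal X\times_B\mathcal X$) is Tor-independent. This is where smoothness, hence flatness, of $f$ is used, and I would lean on Tag~0DJT for that step.
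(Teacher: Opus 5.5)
Your openness argument is the paper's own. It uses the unit cone $K_0$, the reverse Hom complexes $K_{ji}$ and the relative projection kernel $C$, together with derived base change, derived Nakayama and properness of $q$, exactly as in the kernel proof of Proposition~\ref{prop:3-19}. Passing from a geometric point $\bar b$ to $b$ via the faithfully flat extension $k(b)\subset\Omega$ is also fine.

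The gap is in the field-extension assertion. You conclude that $C_K=0$ ``because $C=0$, which was already checked over $k$,'' but nothing earlier checks this unless $k$ is algebraically closed.
\begin{itemize}
\item Your criterion ``full $\Leftrightarrow$ kernel zero'' was established only on geometric fibres.
\item You justified Lemma~\ref{lem:3-17} by derived Nakayama at closed points over $\Omega$. Its proof uses $i_x\colon y\mapsto(x,y)$, which needs $x$ to be $k$-rational. Over a non-closed field a closed point typically has $\kappa(x)\supsetneq k$.
\item Invoking the openness statement for $B=\operatorname{Spec}k$ is circular. Its hypothesis is fullness at a geometric point, i.e.\ over an algebraically closed extension, which is what is to be proved.
\end{itemize}
This is exactly the case used in Corollary~\ref{cor:arbitrary-ground-field}, namely $k\subset\bar k$ with $k$ arbitrary.

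The paper closes it as follows.
\begin{itemize}
\item Fullness over $k$ gives $\Phi_C=0$ by Lemma~\ref{lem:3-18}.
\item For a closed point $x$, $\Phi_C(\kappa(x))$ is the pushforward, along the finite morphism $x\times_kX\to X$, of the derived restriction of $C$ to $x\times_kX$.
\item Finite pushforward is exact and faithful, so these restrictions vanish.
\item Every closed point of $X\times_kX$ lies on some $x\times_kX$, so derived Nakayama gives $C=0$.
\end{itemize}
Only then does flat base change give $C_K=0$.

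A kernel-free alternative also works. Since $\pi\colon X_K\to X$ is affine, $\RHom_{X_K}(\pi^*G,F)\simeq\RHom_X(G,\pi_*F)$ with $\pi_*$ conservative. Hence $\pi^*G$, for $G=\bigoplus_i\cL_i$, is still a compact generator and classically generates $\Perf(X_K)=\Db(X_K)$.
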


\begin{proof}
Openness is \resultcite{Proposition~3.9}{Hu19}. The kernel proof of Proposition~\ref{prop:3-19} applies in this setting, using proper perfect pushforward and derived base change \resultcite{Tag~0DJT}{Stacks}, convolution \resultcite{Tag~0FYP}{Stacks}, and derived Nakayama \resultcite{Tag~0BCD}{Stacks}. The skyscraper-sheaf proof of Lemma~\ref{lem:3-17} applies over each algebraically closed residue field, and the projection construction of Lemma~\ref{lem:3-18} uses only semiorthogonality. Thus the same unit cone, reverse Hom complexes, and projection kernel give the required open neighborhood.

For the ground-field assertion, start with the same projection kernel on a smooth projective variety over a field $F$. If its functor is zero, apply it to the residue field of each closed point $x$. The resulting object is the pushforward of the derived restriction of the kernel to $x\times_F X$. The projection $x\times_F X\to X$ is finite, and its pushforward is exact and faithful. These restrictions therefore vanish, and derived Nakayama gives vanishing of the kernel. The zero kernel stays zero after any extension $F'/F$. Its convolution description commutes with that flat base change, as do the exceptional Hom complexes. The collection over $F'$ is consequently full and exceptional.
\end{proof}

\begin{proposition}\label{prop:appendix-generic-fullness}
Let $S$ be a smooth connected projective surface over $k$, and let $X\to S$ be an ordered tower of $r\geq1$ point blowups, possibly with infinitely near centers. A full exceptional collection of line bundles on $X$ induces one on the blowup of $S_\Omega$ at a geometric generic configuration of $r$ distinct points. Here $\Omega$ is an algebraic closure of the function field of $\Conf_r(S)$.
\end{proposition}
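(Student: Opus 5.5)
We follow the proof of Corollary~\ref{cor:3-23}, replacing the Baire-category choice of centers by passage to the geometric generic point. First, apply the ordered-cluster construction of Lemma~\ref{lem:3-20} over $k$. Its proof is characteristic-free: it uses iterated blowups of a relative diagonal section, smooth relative charts, and the Picard decomposition of a point blowup. It yields a smooth irreducible $B_r$, a smooth projective family $f_r\colon\mathcal X_r\to B_r$ with geometrically connected fibers, and a closed point $b_0\in B_r(k)$ with $(\mathcal X_r)_{b_0}\simeq X$. By part~(iv) of Lemma~\ref{lem:3-20}, each member $\cL_i^0$ of the given collection extends to a line bundle $\widetilde\cL_i$ on $\mathcal X_r$, namely $g_r^*\cM\otimes\bigotimes_a\mathcal H_{a,r}^{\otimes m_a}$.

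Next, invoke the openness part of Lemma~\ref{lem:appendix-fullness}. It gives an open neighborhood $V\subseteq B_r$ of $b_0$ such that the restricted collection is full and exceptional at every geometric point of $V$. By Lemma~\ref{lem:3-20}(ii), the distinct-center locus is an open immersion $j_r\colon\Conf_r(S)\hookrightarrow B_r$ with fibers $\operatorname{Bl}_{p_1,\ldots,p_r}S$. Since $B_r$ is irreducible, $V\cap j_r(\Conf_r(S))$ is nonempty and open. It therefore contains the generic point $\xi$ of $B_r$, which is $j_r$ of the generic point of $\Conf_r(S)$.

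The geometric point $\operatorname{Spec}\Omega\to\xi\to B_r$ lies in $V$, so the collection $(\widetilde\cL_i|_{(\mathcal X_r)_{\bar\xi}})$ is full and exceptional on the geometric fiber. It remains to identify that fiber. Blowing up commutes with flat base change along $\operatorname{Spec}\Omega\to\Conf_r(S)$, and the tautological sections become the geometric generic configuration $(p_1,\ldots,p_r)$ on $S_\Omega$, which consists of distinct points. Hence $(\mathcal X_r)_{\bar\xi}\simeq\operatorname{Bl}_{p_1,\ldots,p_r}S_\Omega=Y_{\{1,\ldots,r\}}$, and the restricted line bundles give the required collection. If one prefers to work only with closed points, an alternative is to use the ground-field clause of Lemma~\ref{lem:appendix-fullness}: the projection-kernel vanishing at the scheme point $\xi$ gives fullness over $k(\xi)$, and this persists after extension to $\Omega$.

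The main obstacle is the openness statement at non-closed geometric points. Proposition~\ref{prop:3-19} was phrased at closed complex points, whereas here we need the non-closed point $\xi$. I would check that the open set $U'$ constructed in its proof, namely the complement of $\Supp K_0$, of $\Supp K_{ji}$, and of $q(\Supp C)$, has scheme-theoretic meaning. At any point $b\in U'$, including $\xi$, derived Nakayama makes the derived fibers vanish. Lemma~\ref{lem:3-18}, applied over the field $k(b)$ and its algebraic closure, then converts vanishing of the projection kernel into fullness. This is exactly what Lemma~\ref{lem:appendix-fullness} asserts, so the step reduces to citing it carefully.
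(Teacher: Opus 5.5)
Your proposal is correct and follows the paper's proof essentially step for step. You extend the line bundles over the ordered-cluster family of Lemma~\ref{lem:3-20}, use Lemma~\ref{lem:appendix-fullness} to obtain a nonempty open fullness locus in the irreducible base $B_r$ containing the generic point of $\Conf_r(S)$, and pass to the geometric generic fiber; your identification of that fiber with $\operatorname{Bl}_{p_1,\ldots,p_r}S_\Omega$ and your check of openness at non-closed points only spell out what the paper leaves implicit.
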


\begin{proof}
Apply Lemma~\ref{lem:3-20} and its proof, using relative point blowups and the Picard decomposition \eqref{eq:3-46}. These constructions are valid over $k$; see \resultcite{Propositions~1.4--1.5, \S1 before Lemma~1.2, and \S2 before Remark~2.1}{Roe01}. They give the smooth irreducible ordered-cluster base $B_r$, with dense open subset $\Conf_r(S)$, and extensions of all the given line bundles from the original closed fiber.

By Lemma~\ref{lem:appendix-fullness}, the extended collection is full and exceptional on a nonempty open subset of $B_r$. This subset contains the generic point of $\Conf_r(S)$. Passing to its geometric generic point proves the assertion.
\end{proof}

\subsection{Extraction and contraction of an exceptional-curve pair}\label{subsec:appendix-contraction}

Fix a partial geometric generic blowup $Y_I$ as in \eqref{eq:appendix-partial-blowups}, with $H^1(S,\cO_S)=H^2(S,\cO_S)=0$ and $K_S$ nef. We apply the divisor alternative of Proposition~\ref{prop:appendix-generic-divisors} to extract and contract an adjacent exceptional-curve pair.

\begin{proposition}\label{prop:appendix-contraction}
If $I\ne\varnothing$ and $Y_I$ has a full exceptional collection of line bundles, the collection contains an adjacent pair $(\cL,\cL(E_a))$ for some $a\in I$. Contracting $E_a$ gives a full exceptional collection of line bundles on $Y_{I\setminus\{a\}}$, of length one less.
\end{proposition}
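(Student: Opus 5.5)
The plan is to transcribe the argument of Subsection~\ref{subsec:exceptional-pair} and Proposition~\ref{prop:3-31} to $Y_I$. Proposition~\ref{prop:appendix-generic-divisors} replaces Lemma~\ref{lem:3-24}, and Proposition~\ref{prop:appendix-height} replaces Proposition~\ref{prop:3-16}. The steps are: first extract a non-adjacent pair $\cL_j\simeq\cL_i(E_a)$, then shrink the gap to make it adjacent, and finally mutate and contract.

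\textbf{Extraction.} Suppose no pair $\cL_j\simeq\cL_i(E_a)$ with $i<j$ occurs. Then every nonzero forward morphism has a zero divisor of type~(ii) in Proposition~\ref{prop:appendix-generic-divisors}, because type~(i) would give exactly such a pair. In type~(ii), $K_{Y_I}$ is nonnegative on every component, so the hypothesis of Proposition~\ref{prop:appendix-height} holds and the collection is not full, a contradiction. So such a pair exists.

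\textbf{Adjacency and Corollary~\ref{cor:3-25}.} I first reprove Corollary~\ref{cor:3-25} over $k$. If $(\cL,\cM)$ is exceptional and $\RHom(\cL,\cM)$ is nonzero and concentrated in degree zero, the zero divisor $D$ of a nonzero map satisfies $h^0(\cO(D))=\chi(\cO(D))>0$, which rules out case~(ii). Hence $D=E_a$. The converse uses only the two restriction sequences and $R\Gamma(\cO_{Y_I})=k$. Lemma~\ref{lem:3-26} is Cartier-divisor duality on $E_a\simeq\mathbf P^1$ and holds over $k$.

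Next choose the pair $\cL_j\simeq\cL_i(E_a)$ with $j-i$ minimal. For $i<k<j$, put $t=\cL_k\cdot E_a-\cL_i\cdot E_a$. Exactly as in Proposition~\ref{prop:3-28}, either $\RHom(\cL_i,\cL_k)\simeq k^{t+1}$ or $\RHom(\cL_k,\cL_j)\simeq k^{-t}$, and each is concentrated in degree zero. In either case Corollary~\ref{cor:3-25} gives a pair with smaller gap, contradicting minimality. So no such $k$ exists and the pair is adjacent.

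\textbf{Contraction.} Let $f\colon Y_I\to Y_{I\setminus\{a\}}$ contract $E_a$; this contraction is a point blowup at $p_a$. Lemmas~\ref{lem:3-29}--\ref{lem:3-30} and Proposition~\ref{prop:3-31} use only exact sequences, Lemma~\ref{lem:3-26}, $Rf_*\cO\simeq\cO$, the projection formula, and $\Pic(Y_I)=f^*\Pic(Y_{I\setminus\{a\}})\oplus\mathbb Z E_a$. All of these hold over the algebraically closed field $\Omega$, citing \resultcite{Lemma~4.1}{Orl93} and \resultcite{\S1}{Roe01}. Applying them yields a full exceptional collection of line bundles on $Y_{I\setminus\{a\}}$, one shorter than the original.

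The main obstacle is checking that nothing in Corollary~\ref{cor:3-25} or Proposition~\ref{prop:3-28} secretly used the complex-analytic choice of centers. Those steps only need the dichotomy of Proposition~\ref{prop:appendix-generic-divisors} together with $\chi\le0$ in case~(ii), and both are supplied.
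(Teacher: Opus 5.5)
Your proposal is correct and follows the paper's proof essentially step for step. Proposition~\ref{prop:appendix-generic-divisors} stands in for Lemma~\ref{lem:3-24}, and Proposition~\ref{prop:appendix-height} for Proposition~\ref{prop:3-16}, in the proofs of Lemma~\ref{lem:3-27} and Corollary~\ref{cor:3-25}; the minimal-index-gap argument of Proposition~\ref{prop:3-28} and the mutation-and-descent argument of Proposition~\ref{prop:3-31} then carry over unchanged over $\Omega$, with the hypothesis $p_g(Y_I)=q(Y_I)=0$ of the latter automatic from the existence of an exceptional line bundle.
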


\begin{proof}
Proposition~\ref{prop:appendix-generic-divisors} supplies the two consequences of Lemma~\ref{lem:3-24} used in the extraction argument: if an effective left-orthogonal divisor is not an $E_a$, its component canonical degrees are nonnegative and its Euler characteristic is nonpositive. Consequently the proofs of Lemma~\ref{lem:3-27} and Corollary~\ref{cor:3-25} apply, using Proposition~\ref{prop:appendix-height} in place of Proposition~\ref{prop:3-16}. They give an exceptional-curve difference and the same criterion for a nonzero forward Hom complex to be concentrated in degree zero.

Now apply the minimal-index-gap proof of Proposition~\ref{prop:3-28}. Its remaining inputs are the Cartier-divisor adjunction formulas of Lemma~\ref{lem:3-26} and line-bundle cohomology on $\mathbf P^1$, both valid in every characteristic. This yields an adjacent pair $(\cL,\cL(E_a))$.

The proof of Proposition~\ref{prop:3-31} gives descent and the length assertion without change. Indeed, Lemmas~\ref{lem:3-29}--\ref{lem:3-30} use explicit divisor and restriction sequences, the point-blowup Picard decomposition \resultcite{\S1, before Lemma~1.2}{Roe01}, and the pushforward identities in \resultcite{Lemma~4.1 and its proof}{Orl93}. These constructions are valid over $k$.
\end{proof}

\subsection{The minimal model and the proof of rationality}\label{subsec:appendix-proof}

\begin{lemma}\label{lem:appendix-minimal}
Let $X$ be a nonrational smooth connected projective surface over $k$ satisfying \eqref{eq:appendix-structure}. It has a smooth minimal model $S$ with the same cohomology vanishings and with $K_S$ nef.
\end{lemma}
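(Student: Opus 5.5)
We will run the surface minimal model program over $k$ and then rule out every minimal model except those with $K_S$ nef. Since $X$ is a smooth projective surface over an algebraically closed field, Castelnuovo's contractibility criterion holds in every characteristic. Contracting $(-1)$-curves one at a time gives a birational morphism $X\to S$ onto a smooth surface containing no $(-1)$-curves; the process terminates because the Picard number drops at each step. Each contraction $f\colon Y\to Z$ is a point blowup, so $Rf_*\cO_Y\simeq\cO_Z$, as in \resultcite{Lemma~4.1 and its proof}{Orl93}. Hence $H^q(Z,\cO_Z)\simeq H^q(Y,\cO_Y)$ for all $q$, and \eqref{eq:appendix-structure} passes from $X$ to $S$.

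It remains to show that $K_S$ is nef when $X$, equivalently $S$, is nonrational. By the classification of minimal surfaces in arbitrary characteristic (Mumford and Bombieri--Mumford, or the Kollár--Mori form of the surface MMP), a minimal smooth projective surface either has nef canonical class or is $\mathbb P^2$ or a geometrically ruled surface $\mathbb P_C(\mathcal V)\to C$ over a smooth curve $C$. In the latter two cases $S$ is rational or birationally ruled. For a ruled surface $S\to C$ one has $H^1(S,\cO_S)\simeq H^1(C,\cO_C)$, since $R^1$ of the structure sheaf vanishes along the $\mathbb P^1$-fibres. So $q=0$ forces $C\simeq\mathbb P^1$, and $S$ is rational. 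Nonrationality therefore leaves only the case $K_S$ nef.

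The one delicate point is whether the characteristic-zero shortcut used in the main text ("uniruled implies birationally ruled") is still needed. In characteristic $p$ there are uniruled but non-ruled surfaces, for example Zariski surfaces, and unirational surfaces of general type. The plan avoids this issue entirely. We never use uniruledness; we use only the dichotomy in the minimal model classification, and that dichotomy refers to $K_S$ being nef or $S$ being $\mathbb P^2$ or ruled. This is exactly what Theorem~\ref{thm:arbitrary-characteristic} needs, because the appendix's generic-center argument (Lemma~\ref{lem:appendix-generic-avoidance}) uses only nefness of $K_S$ and not non-uniruledness. I expect the main obstacle to be citing the classification carefully: in positive characteristic, a minimal surface with $K_S$ not nef is $\mathbb P^2$ or a $\mathbb P^1$-bundle over a curve. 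This follows from the cone theorem for surfaces: an extremal ray with $K_S\cdot R<0$ is contracted to a point, to a curve, or $S$ is a point-contraction target of Picard number one. Each case is identified by the standard characteristic-free argument.
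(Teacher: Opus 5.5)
Your argument is correct, but it reaches nefness of $K_S$ by a different route from the paper.

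The paper passes through Kodaira dimension. It uses the positive-characteristic classification (Liedtke, Theorem~4.4): $\kappa(S)=-\infty$ forces $S$ to be birationally ruled over a curve $C$ with $h^1(S,\cO_S)=g(C)$. So $q(S)=0$ and nonrationality give $\kappa(S)\ge0$. Nefness is then proved by hand. One chooses an effective divisor in $|mK_S|$ and notes that a curve $C$ with $K_SC<0$ must be a component with $C^2<0$. Adjunction, $2p_a(C)-2=C^2+K_SC\ge-2$, then forces $C^2=K_SC=-1$, so $C$ is a $(-1)$-curve, contradicting minimality.

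You instead use the Mori dichotomy directly. If $K_S$ is not nef, the cone theorem gives a $K_S$-negative extremal ray. The classification of such rays on smooth surfaces (e.g.\ Koll\'ar--Mori, Theorem~1.28, valid over any algebraically closed field) leaves three possibilities. The ray is spanned by a $(-1)$-curve, which minimality excludes; or $S\to C$ is a $\mathbf P^1$-bundle with $\rho(S)=2$; or $\rho(S)=1$ and $S\simeq\mathbf P^2$. In the bundle case Leray gives $q(S)=g(C)$.

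This is essentially the main-text argument over $\mathbb C$ with the uniruledness step removed. You are right that the appendix needs only nefness, not non-uniruledness. Your route avoids Kodaira dimension and the deep Enriques-type criterion that $\kappa=-\infty$ implies ruled. The price is the cone theorem, whose characteristic-free proof rests on bend-and-break. The paper's route uses that criterion as a black box but makes the passage from $\kappa\ge0$ to nefness elementary.

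One small fix: your final sentence garbles the trichotomy. State it as the three cases listed above.
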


\begin{proof}
Successive contraction of $(-1)$-curves produces a smooth projective minimal model; each contraction decreases the Picard number. In positive characteristic this is Castelnuovo's contraction theorem \resultcite{Theorem~4.2}{Lie13}; for characteristic zero see \resultcite{\S5.6}{Deb11}. The structure-sheaf cohomology is unchanged by the point blowups, as in the proof of Proposition~\ref{prop:appendix-generic-divisors}.

If $\kappa(S)=-\infty$, the classification of surfaces makes $S$ birationally ruled. In positive characteristic, the precise statement, including $h^1(S,\cO_S)=g(C)$ for a ruling curve $C$, is \resultcite{Theorem~4.4}{Lie13}; the characteristic-zero statement follows from \resultcite{\S5.6}{Deb11}. Thus $h^1=0$ would make $S$ rational, a contradiction. Hence $\kappa(S)\geq0$.

Adjunction and minimality give nefness. Choose an effective divisor linearly equivalent to $mK_S$ for some $m>0$. If $K_SC<0$ for an integral curve $C$, it is a component of that divisor and has $C^2<0$. Adjunction gives
\[
2p_a(C)-2=C^2+K_SC.
\]
Both integers on the right are negative, while the left is at least $-2$. They must both equal $-1$, and $p_a(C)=0$. Normalization then shows that $C$ is a smooth rational $(-1)$-curve, contradicting minimality. Therefore $K_S$ is nef.
\end{proof}

\begin{proof}[Proof of Theorem~\ref{thm:arbitrary-characteristic}]
We apply the induction of Subsection~\ref{subsec:main-proof} with the replacements established above. Suppose that $X$ is nonrational and has a full exceptional collection of line bundles. Exceptionality gives \eqref{eq:appendix-structure}, and Lemma~\ref{lem:appendix-minimal} gives a tower $X\to S$ of $r$ point blowups with $K_S$ nef.

If $r>0$, Proposition~\ref{prop:appendix-generic-fullness} supplies a full exceptional collection on the corresponding geometric generic blowup of $S_\Omega$. Lemma~\ref{lem:appendix-generic-avoidance} applies to every subset of its centers. Thus Proposition~\ref{prop:appendix-contraction} supplies each step of the same induction on the number of remaining centers, and yields a full exceptional collection of line bundles on $S_\Omega$. For $r=0$, take $\Omega=k$ and use the original collection.

The resulting collection is nonempty because $\Db(S_\Omega)\ne0$. Nefness persists under field extension, so Proposition~\ref{prop:appendix-height} contradicts fullness on $S_\Omega$. Hence $X$ is rational.
\end{proof}

\subsection{The ground field}\label{subsec:appendix-ground-fields}

\begin{proof}[Proof of Corollary~\ref{cor:arbitrary-ground-field}]
Let $\bar k$ be an algebraic closure of $k$. By Lemma~\ref{lem:appendix-fullness}, the collection remains full and exceptional on $X_{\bar k}$. Theorem~\ref{thm:arbitrary-characteristic} makes $X_{\bar k}$ rational. This proves geometric rationality over every field.

Suppose now that $k$ is perfect. The collection on $X$ is numerically exceptional and consists of numerical line bundles. It also has maximal numerical length. Indeed, the successive exceptional projections of Subsection~\ref{subsec:exceptional-mutations} give
\[
K_0(X)=\bigoplus_{i=1}^\gamma\mathbb Z[\cL_i].
\]
The Euler matrix is upper triangular with diagonal entries one, so its determinant is one and its radical is zero. Hence $\gamma=\operatorname{rank}K_0^{\mathrm{num}}(X)$. Surface Riemann--Roch identifies this rank with $\operatorname{rank}N^1(X)+2$: after tensoring with $\mathbb Q$, the numerical Chern character has rank, divisor class, and degree components, with nondegenerate intersection pairing on numerical divisor classes.

Thus $X$ is geometrically rational over a perfect field and admits a numerically exceptional collection of maximal length consisting of numerical line bundles. These are the hypotheses of the second alternative in \resultcite{Theorem~3.7}{Via17}, which gives $k$-rationality.
\end{proof}

\end{document}